%We have tried to use English spellings
% For arxiv
%% 31 January 2013
%%%%%%%%%%%%%%%%%%%%%%%%%%%%%%%%%%%%%%%%%%%%%
%%%%%%%
%%%%%%%    A note on Gevrey developments of hypergeometric integrals
%%%%%%%
%%%%%%%%%%%%%%%%%%%%%%%%%%%%%%%%%%%%%%%%%%%%%
\documentclass[12pt]{amsart}

\oddsidemargin=-.3cm \evensidemargin=-.3cm
\topmargin=-1cm \textheight=680pt \textwidth=500pt
\parskip=1ex

\usepackage{amsthm}
\usepackage{amssymb}
\usepackage{amsmath}
\usepackage{amsfonts} %This is to have nice letters
\usepackage{amsthm}
\usepackage{bm}
\usepackage{color}
\usepackage{graphicx}
\usepackage[all]{xy}
\usepackage{mathtools}

\usepackage{euscript}
\usepackage{mathrsfs}

\newcommand{\excise}[1]{}%{$\star$\textsc{#1}$\star$}

\newtheorem{theorem}{Theorem}[section]
\newtheorem{lemma}[theorem]{Lemma}

\newtheorem{corollary}[theorem]{Corollary}
\newtheorem{proposition}[theorem]{Proposition}

\theoremstyle{definition}

\newtheorem{remark}[theorem]{Remark}

%For numbered lists with arabic 1. 2. 3. numbering
%
        {\begin{list}
                {\noindent\makebox[0mm][r]{\arabic{enumi}.}}
                {\leftmargin=5.5ex \usecounter{enumi}}
        }
        {\end{list}}

%For numbered lists within the main body of the text
%
        {\begin{list}
                {\noindent\makebox[0mm][r]{(\roman{enumi})}}
                {\leftmargin=5.5ex \usecounter{enumi}}
        }
        {\end{list}}

%For separated lists with consecutive numbering

\newcommand{\baseRing}[1]{\ensuremath{\mathbb{#1}}}
\newcommand{\Z}{\baseRing{Z}}
\newcommand{\C}{\baseRing{C}}
\newcommand{\N}{\baseRing{N}}

\newcommand{\aex}{{\rm a.e.}}

%for boldface emphasizing

%single characters, used in math mode
\def\<{\langle}
\def\>{\rangle}
\def\0{\mathbf{0}}

\def\CC{{\mathbb C}}

\newcommand{\NN}{{\mathbb N}}

\def\RR{{\mathbb R}}

\def\ZZ{{\mathbb Z}}

\def\cD{{\mathcal D}}

\def\cH{{\mathcal H}}

\def\cM{{\mathcal M}}

\def\cO{{\mathcal O}}

\def\cQ{{\mathcal Q}}

\def\del{\partial}
%\def\ini{{\mbox{in}}}
%\def\vea{\varepsilon_{\hspace{-.1ex}A}}

%roman font words for math mode
%\def\Horn{{\rm Horn}}

%\def\Span{{\rm Span}}
%%note: do not define this as \def\span, since
%%that is an internal TeX command, and it messes
%%up \begin{align}...\end{align}, among other things
%\def\th{{\rm th}}
%\def\st{{\rm st}}

%\def\cocoa{{\hbox{\rm C\kern-.13em o\kern-.07em
%   C\kern-.13em o\kern-.15em A}}}
%\def\primary{\text{\rm primary}}

\def\nsupp{\text{\rm nsupp}}

\def\sp{\text{\rm sp}}
\def\sol{\text{\rm Sol}}

%math symbols without arguments

%math symbols taking arguments

%\def\dlim{{\lim\limits_{\raisebox{.2ex}
%    {$\scriptstyle\longrightarrow$}}}{_{\,}}}

% Replaces \atop
%\newcommand{\aoverb}[2]{{\genfrac{}{}{0pt}{1}{#1}{#2}}}
%0 = displaystyle               in the 4th argument
%1 = textstyle
%2 = scriptstyle
%3 = scriptscriptstyle
%\def\twoline#1#2{\aoverb{\scriptstyle {#1}}{\scriptstyle {#2}}}

% For paragraph break

\numberwithin{equation}{section}
\parindent0pt

%%Restrictions
\newcommand\restr[2]{{% we make the whole thing an ordinary symbol
  \left.\kern-\nulldelimiterspace % automatically resize the bar with \right
  #1 % the function
  \vphantom{\big|} % pretend it's a little taller at normal size
  \right|_{#2} % this is the delimiter
  }}

%%\newcount\c@HOUR
%%\newcount\c@FINALHOUR
%%\newcount\c@MINUTE
%%\newcount\c@HOURSINMINUTES
%%\newcount\@INTVAL
%
%\newcommand{\twodigit}[1]{\@INTVAL=#1\relax\ifnum\@INTVAL<10 0\fi\the\@INTVAL}
%
%\c@FINALHOUR=\time\divide\c@FINALHOUR by 60\relax
%\c@HOUR=\time\divide\c@HOUR by 60\relax
%\c@HOURSINMINUTES=\c@HOUR\multiply\c@HOURSINMINUTES by 60\relax
%\c@MINUTE=\time\advance\c@MINUTE by -\c@HOURSINMINUTES\relax
%\def\THEHOUR{\the\c@FINALHOUR}
%\def\THEMINUTES{\the\c@MINUTE}
%
%% time in HH:MM (24 hour clock)
%\newcommand\rightnow{\twodigit{\THEHOUR}:\twodigit{\THEMINUTES}}

\begin{document}

\title{Gevrey expansions of hypergeometric integrals I}\thanks{First author partially supported by MTM2010-19336 and FEDER, FQM333}

\author{Francisco-Jes\'us Castro-Jim\'enez}
\address{Departamento de \'Algebra, Universidad de Sevilla, Espa\~{n}a, Spain.} \email{castro@algebra.us.es}
\thanks{}

\author{Michel  Granger}
\address{Universit\'e d'Angers, D\'epartement de Math\'ematiques, LAREMA, CNRS
UMR n. 6093, 2 Bd. Lavoisier, 49045 Angers, France.}
\email{michel.granger@univ-angers.fr}
\thanks{}

\begin{abstract}
We study integral representations of the Gevrey series solutions of irregular hypergeometric
systems. In this paper we consider the case of the systems associated with a one row matrix,
for which the integration domains are one dimensional. We prove that any Gevrey series solution along the singular support of the system is the asymptotic expansion of a holomorphic solution given by a carefully chosen integral representation.
\end{abstract}

\maketitle

\begin{center}
%\currenttime \; \;
\today
\end{center}

\section{Introduction}

Hypergeometric systems also known as GKZ systems were introduced in \cite{GGZ87} and \cite{GZK89} as a far reaching generalisation of the Gauss hypergeometric differential equation. They appear as a special family of $D$-modules and they have been first studied in the regular case. For example in \cite{GKZ90} the authors consider integral representations of the solutions of hypergeometric systems, at generic points of the space, which they call Euler integrals. In the irregular or confluent case, A. Adolphson considers in  \cite{Adolphson} other integral representations of solutions which involve exponentials of polynomial functions and appropriate integration cycles. In this paper we develop new aspects in
the irregular case namely the link between Gevrey series solutions and holomorphic solutions in sectors following Adolphson's approach. We want to materialise a Gevrey series solution as an asymptotic expansion in a sector of such an integral solution.

%In \cite{GKZ90} the authors have studied integral representations of the solutions
%(also known as hypergeometric functions) of hypergeometric systems at generic points of the space.
%These systems have been introduced in \cite{GGZ87} and \cite{GZK89} developing the fact that
%they belong to a special case of $D$-modules. They have been first studied in the regular case
%and later in the irregular or confluent case. In this paper we develop new aspects in
%the irregular case namely the link between Gevrey series solutions and
%holomorphic solutions in sectors. These solutions are described by appropriate integral representations involving exponentials of polynomial functions and we aim at proving that we can materialise any Gevrey series as an asymptotic expansion in a sector of such a solution.

Let us fix some notations: $D$ stands for the complex Weyl algebra of order $n$, where
$n\geq 0$ is an integer. Elements in $D$ are linear partial
differential operators with polynomial coefficients. The polynomial ring $\C[\partial]:=\C[\partial_1,\ldots,\partial_n]$ is a subring of the Weyl algebra $D$, where the $\partial_j$'s  represent the partial derivatives with respect to the variables in the space $\CC^n$.
The input data for a GKZ system is a pair $(A,\beta)$ where $\beta$ is a vector in $\CC^d$
and $A=(a_{ij})=(a^{(1)},\cdots , a^{(n)})\in \Z^{d\times n}$ is a matrix of rank $d$ whose $j^{\rm{th}}$ column is $a^{(j)}$. The toric ideal $I_A \subset \C[\partial]$ is the ideal generated by the family of binomials $\partial^u -\partial^v$ where $u,v\in \N^n$ and $Au=Av$. The ideal $I_A$ is a prime ideal and the Krull dimension of the quotient ring $\CC[\partial]/I_A$ equals $d$.
Following \cite{GGZ87, GZK89}, the hypergeometric ideal associated with the pair $(A,\beta)$ is~: $$H_A(\beta)=DI_A + D(E_1-\beta_1,\ldots,
E_d-\beta_d)$$ where $E_i= \sum_{j=1}^n a_{ij}x_j\partial_j$
is the $i^{\rm{th}}$ Euler operator associated with the $i^{\rm{th}}$ row of $A$. The
corresponding hypergeometric $D$--module (or hypergeometric system) is the
quotient $D$--module $M_A(\beta):= \frac{D}{H_A(\beta)}$. %The variables $x_j$ are in 1-1 correspondence with the columns $a(j)$.

In \cite{GZK89} and \cite{Adolphson} it is proven that any hypergeometric $D$-module is holonomic. Moreover, a characterization
of the regularity of $M_A(\beta)$ is provided in the series of papers \cite{Hotta}, \cite{SST} and \cite{schulze-walther}:
The holonomic $D$-module  $M_A(\beta)$ is regular if and only if the toric ideal $I_A$ is homogeneous for the standard grading
in the polynomial ring $\CC[\partial]$. In particular the condition to be regular for $M_A(\beta)$ is independent of $\beta$.
The concept of regularity has been studied first in the case of an ordinary differential equation given by an operator $P\in D$ for $n=1$.
Regularity is characterised according to Fuchs theorem
by the nullity of the irregularity number, an invariant  combinatorially defined from the Newton polygon of $P$. In \cite{Malgrange-1974} B. Malgrange proved that the irregularity is the dimension of the space of solutions at the origin of $\CC$ with values in the space $\CC[[x]]/\CC\{x\}$ of formal power series modulo convergent ones. Later J.P. Ramis gave
a refined version of this result in \cite{Ramis-1984} calculating the space of solutions of a given Gevrey index again by using the Newton polygon of the operator.

The concept of irregularity in higher dimension is considerably more involved but generalizes the above results. Let us consider the structure sheaf $\cO_X$ of a complex manifold $X$, the sheaf of rings $\cD_X$ of linear differential operators with holomorphic  coefficients on $X$  and denote $\cO_{\widehat{X\vert Y}}$ the completion of $\cO_X$ along a smooth hypersurface $Y$. In \cite{Mebkhout-positivite} Z. Mebkhout introduces for a holonomic $\cD_X$--module $\cM$ its irregularity complex along $Y$, ${\rm Irr}_Y(\cM)={\RR\cH}om_{\cD_X}(\cM,\cO_{\widehat{X\vert Y}}/\cO_{X\vert Y})$, and in \cite{Laurent-Mebkhout} the Gevrey filtration of this complex is introduced and related to other invariants of the system, the algebraic slopes in the sense of Y. Laurent.

In the case of hypergeometric $D$-modules the irregularity sheaves along coordinate subspaces, and Gevrey series solutions are studied and described in \cite{F} (see also \cite{FC2,FC1}).  Beforehand A. Adolphson \cite{Adolphson} gave a formula for the dimension of the space of holomorphic solutions at a generic point of the space and for a generic value of the parameter $\beta$, and he also described integral representations of solutions of these confluent hypergeometric systems. In the non-confluent case an analogous dimension formula was previously given in \cite{GZK89}. In \cite{Esterov-Takeuchi-2012} A. Esterov and K. Takeuchi prove that these generic solution spaces are in fact completely described by integral representations along rapid decay cycles as introduced by M. Hien in \cite{Hien07} and \cite{Hien09}.

We want to explore the more hidden link between these integral representations and the Gevrey series solutions described in \cite{F} and \cite{FC1,FC2}. In this paper we treat the case of a matrix with one row $A=(a_1,\ldots, a_n)$ with  $0<a_1<\cdots <a_n$ a list of co-prime integers. This is the case where the integration cycles are paths, and as explained below the significant Gevrey series are along the hyperplane $x_n=0$. We prove that any Gevrey series solution of the system can be obtained as the asymptotic expansion of an integral representation along a well chosen path. A specificity of the one row case is that the rank of the space of Gevrey solutions is independent of $\beta$ and we can treat all the values of $\beta$. For a generic $\beta$ we only use the space of rapid decay cycles and for special values of $\beta$, namely for $\beta \in \ZZ\setminus\left(\NN a_1+\cdots +\NN a_n\right)$ we  must add an exceptional path, without the rapid decay property.  The method of the proof is to reduce the statement to the case of the matrix $A=(a,b)$, using the fact that the restriction is compatible with  being a Gevrey solution as well as with taking integrals on a fixed path. In parallel we know by an argument using a Gevrey version of Cauchy-Kovalevskaya theorem that the dimension of the Gevrey solution space is $a_{n-1}$ \cite{FC1}. In the case $A=(a,b)$ the Gevrey expansion is with respect to the second variable $x_2$. Its coefficients depend on the variable $x_1$ and are holomorphic in some sector depending on the chosen paths. The main issue is to choose carefully a number of paths of integration that yield a basis of the space of Gevrey series solutions and share a common sector of convergence for these coefficients. This choice appears to be possible if we restrict the range in the argument of the variable $x_2$ around a given direction.

Here is a summary of the contents of this paper. In Section 2 general facts are given about Gevrey series solutions following \cite{F}. In the case of a one row matrix we know that the
characteristic variety of the hypergeometric system is the union $T^*_XX\cup T^*_YX$ of the zero
section and of the conormal to the hypersurface $Y: (x_n=0)$. It is therefore sufficient to consider
the irregularity sheaf and the germs of Gevrey series solutions with respect to the hyperplane $Y$ and at a generic point
$(0,\dots,0,\epsilon,0)$ of $Y$. We recall   from \cite{FC1}  the description of a basis of
the space of Gevrey series solutions in terms of $\Gamma$-series and the proof that the dimension of this space
is always equal to $a_{n-1}$. We also describe its behaviour by the restriction operation which
consists in omitting variables among $x_1,\dots, x_{n-2}$. In Section 3 we recall the definition of
hypergeometric integrals of exponential type and the fact that %in the irregular case
they are
solutions of the system. It is a quite general fact that these solutions have an asymptotic
expansion as a Gevrey series with respect to the variable $x_n$ provided that we choose an
integration path of rapid decay both for the function $t^{-\beta-1}\exp\left(x_1t^{a_1}+\dots+x_nt^{a_n}\right)$
and for $t^{-\beta-1}\exp\left(x_1t^{a_1}+\dots+x_{n-1}t^{a_{n-1}}\right)$. At the beginning of Section 4 we prove that these
expansions are Gevrey series of order less than or equal to $\frac{a_n}{a_{n-1}}.$

In the remainder of this Section we prove the main result of this paper about realisation of these
Gevrey series as asymptotic expansion of integral solutions. First we treat in detail the case of dimension 2 and the last Subsection consists in using various restriction operations on the integral that are compared to the analogue described in Section 2 for the Gevrey series.
In a short last section we deduce from the previous results an explicit description of the germ  of the irregularity sheaf along $Y$ at a generic point. We get a family of integral with asymptotic expansions that yields as expected a basis of the space of classes of Gevrey series solutions modulo convergent ones.

% whose columns
%$a_1,\ldots,a_n$ span the $\Z$-module $\Z^d$. We also assume that
%all $a_i\neq 0$ and that the cone generated by the columns in $\R^n$
%contains no lines (one says in this case that this cone is {\em
%pointed}).

%An ideal $I$ in $\C[\partial]$ is said to be an $A$-graded ideal if
%it is generated by $A$-homogenous elements (equivalently if for
%every polynomial in $I$ all its $A$-graded components are also in
%$I$.)
%
%The matrix $A$ also induces a $\Z^d$-grading on the Weyl algebra $D$
%(also called the $A$-grading) by defining $\deg(\partial_i)=-a_i$
%and $\deg(x_i)=a_i$.

\section{Gevrey solutions of hypergeometric systems}\label{section-Gevrey-sol-hyp-sys} In this Section we review some results on the  construction of Gevrey series solutions of hypergeometric systems with respect to a coordinate hyperplane. We consider $X=\CC^n$ and the hyperplane $Y$ defined by $x_n=0$.
% $\cO_{X}$ the sheaf of holomorphic functions on $X$ and $\cD_X$ the sheaf of linear differential operators with holomorphic coefficients.
 With the hypergeometric system $M_A(\beta)$ we associate the left coherent $\cD_X$--module $\cM_A(\beta) := \frac{\cD_X}{\cD_X H_A(\beta)}$ which is called the analytic hypergeometric system associated with the pair $(A,\beta)$.

%Let us denote  and $\cO_{\widehat{X\vert Y}}$ the formal completion of $\cO_{X}$ along $Y$.
A germ $f$ of the sheaf $\cO_{\widehat{X\vert Y}}$ at a point $(p,0)\in Y$ has the form $f=\sum_{m\geq 0} f_m (x_1 ,\ldots , x_{n-1}) x_n^m$ where  all the $f_m$ are holomorphic functions in a common neighbourhood of $p$\,; in particular the restriction of $\cO_X$ to $Y$, denoted by $\cO_{X\vert Y}$, is a subsheaf of $\cO_{\widehat{X\vert Y}}$.

For any real number $s$, we consider the sheaf $\cO_{\widehat{X\vert Y}}(s)$ of {\em Gevrey series along $Y$} of order less than or equal to  $s$ defined as the subsheaf of $\cO_{\widehat{X\vert Y}}$ whose germs $f$ at any $(p,0)\in Y$
satisfy the following convergence condition: $$\sum_{m\geq 0} \frac{f_m (x_1 ,\ldots , x_{n-1})}{{m!}^{s-1}} x_n^m  \in \cO_{X \vert Y, (p,0)}.$$ %One has $\cO_{\widehat{X\vert Y}}(1)= \cO_{{X\vert Y}}$ and we write $\cO_{\widehat{X\vert Y}}(+\infty) = \cO_{\widehat{X\vert Y}}.$

If $s'<s$ then $\cO_{\widehat{X\vert Y}}(s')\subset\cO_{\widehat{X\vert Y}}(s)$.
%We denote by  $\cO_{\widehat{X\vert Y}}({<}\,\!s)$ the inductive limit sheaf of $\cO_{\widehat{X\vert Y}}(s')$ for $s'<s$.
If a germ $f$ belongs to $\cO_{\widehat{X\vert Y}}(s)$ for some $s$ but $f\notin \cO_{\widehat{X\vert Y}}(s')$ for all $s'<s$, we say that the {\em index} of the Gevrey series $f$ is $s$.

Let $A=(a^{(1)} \; \cdots \; a^{(n)} )$ be a full rank $d\times n$ matrix with $a^{(j)} \in
\ZZ^d$ for $j=1 ,\ldots ,n$.
In \cite{GZK89} and \cite{SST}, the authors associate with any vector $v\in \CC^n$ satisfying $Av=\beta$
a series expression of the form
\begin{equation}\label{def-Gamma-series-general}
\varphi_{A,\beta,v}(x):= x^{v} \sum_{u\in N_v} \Gamma[v;u]
x^{u} \end{equation}
where $N_v = \{u\in \ker_{\ZZ} (A)\; \vert  \;
\operatorname{nsupp}(v+u)=\operatorname{nsupp}(v) \}$,  $\ker_\ZZ
(A) =\{u\in \mathbb{Z}^n : \; A u=0 \}$ and
$\operatorname{nsupp}(w):=\{i\in \{1,\ldots , n\}\, \vert\,  w_i \in
\ZZ_{<0 }\}$ is the negative support of $w\in \CC^n$.
The coefficient $\Gamma[v;u]$ equals $\frac{[v]_{u_{-}}}{[v+u]_{u_{+}}}$ where
$[v]_{u}=\prod_{i} [v_i ]_{u_i} $ and  $[v_i ]_{u_i}=\prod_{j=1}^{u_i
} (v_i -j +1)$ is the Pochhammer symbol for $v_i \in \CC$, $u_i \in
\NN$. They call it the $\Gamma$-series associated with $v$.

We write $\varphi_v=\varphi_{A,\beta,v}$ is no confusion is possible.
It is proved in \cite[Proposition 3.4.13]{SST} that the formal expression $\varphi_v$ is annihilated by the hypergeometric ideal $H_A(\beta )$ if and only if the negative support of $v$ is minimal,
which means that  there is no $u \in \ker_\ZZ(A)$ with
$\operatorname{nsupp}(v+u)\subsetneq \operatorname{nsupp}(v)$.

When $\beta \in \CC^d$ is very generic, that is when $\beta $ is not in
a countable union of Zariski closed sets, there is a basis of the
Gevrey solutions space of $\cM_A (\beta )$ along $Y$ at a generic point of $Y$, given by series
$\varphi_v$ for suitable vectors $v\in \CC^n$, \cite[Th. 6.7]{F}.
%\ref{hypergeometric-Gevrey-solutions}).\marginpar{Maybe to be deleted}

In this article we restrict ourself to the case where $A=(a_1,\ldots,a_n)$ is
a row matrix with $0<a_1<\cdots <a_n$, $(a_1,\ldots ,a_n)$ coprime and $n\geq 2$.
We recall here some results about the Gevrey series solutions of $\cM_A(\beta)$ as presented in \cite[Sections 4, 5]{FC1}. Let us fix some notations. As before $X=\CC^n$ and $Y\subset X$ denotes the hyperplane defined by $x_n=0$. Let us write $Z\subset X$ the hyperplane defined by $x_{n-1}=0$.

When $s<\frac{a_n}{a_{n-1}}$  one sees from the results in \cite{FC1} that the set of Gevrey solutions in $\cO_{\widehat{X\vert Y}}(s)$ is zero if $\beta\notin\NN A$ and is a one dimensional space generated by a polynomial if $\beta\in \NN A$. This result follows also from what we show in this paper and we focus now on the case $s\geq\frac{a_n}{a_{n-1}}$.

\subsection{Case $a_1=1$}\label{case-a1-equal-1} Assume first $n\geq 3$. A basis of the free $\ZZ$--module $\ker_\ZZ(A)$ is formed of the vectors $\{u^{(2)},\ldots,u^{(n)}\}$ where   $u^{(n-1)} = (a_{n-1},0,\ldots,0,-1,0)$ and for $i=2,\ldots,n$, $i\not= n-1$, we define $$u^{(i)} = (-a_i,0,\ldots,0,{1},0,\ldots,0)$$ where $1$ is in the $i$-th component.
For each ${\bf m}=(m_2,\ldots,m_n)\in \ZZ^{n-1}$ we write $u({\bf m}) = \sum_{i=2}^n m_i u^{(i)}$ the corresponding element in $\ker_\ZZ(A)$.

For $j=0,\ldots,a_{n-1}-1$ define $v^j =(j,0,\ldots,0,\frac{\beta - j}{a_{n-1}},0)\in \CC^n$
and consider the associated $\Gamma$--series
\begin{equation}\label{def-Gamma-series-1-a2-an}
\varphi_{A,\beta,v^{j}} = x^{v^j} \sum_{\stackrel{m_2,\ldots, m_{n-1},m_n \geq 0}{ j+a_{n-1}m_{n-1} \geq {\sum_{i\neq n-1}
a_i m_i }}} \Gamma[v^j; u({\bf m})] x^{u({\bf m})}.\end{equation}

We  write $\varphi_{A,\beta}^{(j)} = \varphi_{A,\beta,v^{j}}$ and also $\varphi^{(j)} = \varphi_{A,\beta,v^{j}}$ if no confusion arises. By the choice of a basis of $\ker_\ZZ(A)$ that we make $\varphi^{(j)}$ is a series in $x^{v^j}\CC[[x_1,\ldots,x_{n-1}^{-1},x_n]][x_1^{-1}]$. %(\paco{$\pm 1$???}).

Notice here that the  summation in $\varphi^{(j)}$ is taken, according to (\ref{def-Gamma-series-general}), over the set $N_{v^{(j)}} = \{u({\bf m})\, \vert \, \nsupp(v^{(j)}+u({\bf m})) = \nsupp(v^{(j)}\}$. This set $N_{v^{(j)}}$ is indexed by
\[
\begin{cases}\{{\bf m} \in \NN^{n-1}\,\vert \, j+a_{n-1}m_{n-1} \geq \sum_{i\neq n-1} a_i m_i\} &\mbox{ if }\frac{\beta -j}
{a_{n-1}} \notin \NN
\\
\{{\bf m} \in \NN^{n-1}\,\vert \, j+a_{n-1}m_{n-1} \geq \sum_{i\neq n-1} a_i m_i,\; \frac{\beta -j}{a_{n-1}}\geq m_{n-1}\} &\text{ if }\frac{\beta -j}{a_{n-1}} \in \NN .\end{cases}
\]
In this last case we can write $\beta = j+ha_{n-1}$ for unique $0\leq j < a_{n-1}$ and $h\in \NN$ and then the series $\varphi^{(j)}$ is a polynomial since $[h]_{m_{n-1}}=0=\Gamma[v^j;u({\bf m})]$ if $m_{n-1}\geq h+1$. In fact $\varphi^{(j)}$ is a polynomial if and only if  we are in that case.

The negative support of each  $v^j$ is $\emptyset$ if $h\in \NN$ and $ \{n-1\}$ if $\beta - j$ is a negative multiple of $a_{n-1}$ and it is minimal in both cases. The series $\varphi^{(j)}$ is a polynomial in the first case and a solution of the ideal $H_A(\beta)$ in both. More precisely,

\begin{theorem}\label{basis-gevrey-solutions} {\rm \cite[Th. 4.21, i)]{FC1}} Let $A=(1,a_2, \ldots, a_n )\in \ZZ^{n}$ with $1<a_2 <\cdots < a_n $, $Y=(x_n=0)\subset X$
and $Z=(x_{n-1}=0)\subset X$. Then
the set of germs at $p$ of Gevrey series $\{\varphi^{(j)} \, \vert \, j=0,\ldots,a_{n-1}-1\}$ is a basis of
$\cH om_{\cD_X}(\cM_A(\beta),\cO_{\widehat{X|Y}}(s))_p$ for all
$\beta \in \CC$, $p\in Y\setminus Z$ and $s\geq a_n /a_{n-1}.$
\end{theorem}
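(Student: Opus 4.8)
The plan is to prove the theorem by establishing three things in sequence: that each $\varphi^{(j)}$ is a Gevrey series of order $\leq a_n/a_{n-1}$ along $Y$, that these $a_{n-1}$ series are linearly independent, and that they span the whole Gevrey solution space at a generic point $p\in Y\setminus Z$.

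\emph{Step 1: the $\varphi^{(j)}$ are solutions and have the right Gevrey index.} By the cited result of Saito--Sturmfels--Takayama (Proposition 3.4.13 in \cite{SST}), each $\varphi^{(j)}=\varphi_{A,\beta,v^j}$ is annihilated by $H_A(\beta)$ because the negative support of $v^j$ is minimal (it is $\emptyset$ or $\{n-1\}$, as noted in the excerpt). So each $\varphi^{(j)}$ is a formal solution. To see it lies in $\cO_{\widehat{X|Y}}(s)$ for $s=a_n/a_{n-1}$, I would group the series $\varphi^{(j)}$ by powers of $x_n$: write $\varphi^{(j)}=\sum_{m_n\geq 0} f_{m_n}(x_1,\dots,x_{n-1})\,x_n^{m_n}$ where $f_{m_n}$ is obtained from the inner sum over $m_2,\dots,m_{n-1}$. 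One must bound the growth in $m_n$ of the coefficients $\Gamma[v^j;u(\mathbf{m})]$, using the explicit Pochhammer-symbol formula $\Gamma[v;u]=[v]_{u_-}/[v+u]_{u_+}$; a Stirling-type estimate shows that the dominant factorial contribution grows like $(m_n!)^{(a_n-a_{n-1})/a_{n-1}}=(m_n!)^{s-1}$, which is exactly the Gevrey-$s$ condition. (The paper presumably defers the careful version of this estimate to Section 4, where the bound $s\leq a_n/a_{n-1}$ is announced; here I would cite or reproduce the key inequality.) That $s\geq a_n/a_{n-1}$ is the smallest such exponent (when $\varphi^{(j)}$ is not a polynomial) follows from a matching lower bound, but for the theorem as stated only the upper bound is needed.

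\emph{Step 2: linear independence.} The vectors $v^0,\dots,v^{a_{n-1}-1}$ have first coordinates $0,1,\dots,a_{n-1}-1$, which are pairwise incongruent modulo $a_{n-1}$. Since every monomial $x^{v^j+u(\mathbf{m})}$ appearing in $\varphi^{(j)}$ has $x_1$-exponent congruent to $j$ modulo $a_{n-1}$ (because $u^{(i)}$ contributes $-a_i\equiv 0$ or, for $i=n-1$, $a_{n-1}\equiv 0$ to the first coordinate modulo $a_{n-1}$ — wait, one must be slightly careful: the exponents of $x_1$ in $u(\mathbf m)$ are $-\sum_{i\neq n-1}a_i m_i + a_{n-1}m_{n-1}$, which modulo $a_{n-1}$ is $-\sum_{i\neq n-1}a_i m_i$, not obviously $\equiv j$). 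The cleaner statement is that the leading exponent $v^j$ itself has $x_n$-exponent $(\beta-j)/a_{n-1}$, and these are distinct for distinct $j$ in the relevant generic range of $\beta$; alternatively the supports of the $\varphi^{(j)}$ in the $(x_1,x_n)$-bigrading are separated enough that no nontrivial $\CC$-linear combination can vanish. I would make this precise by exhibiting, for each $j$, a monomial occurring in $\varphi^{(j)}$ and in no $\varphi^{(j')}$ with $j'\neq j$ — the initial monomial $x^{v^j}$ works once one checks the exponents $v^j$ are pairwise distinct, which holds for all but countably many $\beta$ and, with a bit more care using congruences mod $a_{n-1}$ in the last coordinate, for all $\beta$.

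\emph{Step 3: spanning — this is the main obstacle.} Having $a_{n-1}$ independent Gevrey solutions, it suffices to know that $\dim_{\CC}\cH om_{\cD_X}(\cM_A(\beta),\cO_{\widehat{X|Y}}(s))_p = a_{n-1}$. This dimension count is exactly the result the excerpt attributes to \cite{FC1} (obtained via a Gevrey-refined Cauchy--Kovalevskaya theorem), so I would invoke it directly: the characteristic variety of $\cM_A(\beta)$ in the one-row case is $T^*_XX\cup T^*_YX$, the multiplicity of $\cM_A(\beta)$ along $T^*_YX$ is $a_{n-1}$, and a Gevrey version of Cauchy--Kovalevskaya (for $s$ at or above the slope $a_n/a_{n-1}$, which is the only slope of the system along $Y$) identifies the stalk of the Gevrey solution sheaf with a space of dimension equal to that multiplicity. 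Given that input, Steps 1 and 2 force the $\varphi^{(j)}$ to be a basis. The genuinely hard analytic content — proving the dimension is $a_{n-1}$, and proving the Gevrey estimate in Step 1 — is thus either imported from \cite{FC1} or postponed to Section 4; the work internal to this theorem is the bookkeeping of Steps 1--2 plus citing the dimension formula, and the restriction to $p\in Y\setminus Z$ enters because at points of $Z$ the vectors $v^j$ can acquire extra negative support and the $\Gamma$-series degenerate.
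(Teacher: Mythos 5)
First, a point of orientation: the paper does not prove this theorem at all --- it is imported verbatim from \cite[Th.\ 4.21, i)]{FC1}, so there is no internal proof to compare against. Your three-step architecture (solution property via minimal negative support, Gevrey estimate, linear independence, dimension count) is indeed the standard route and matches what \cite{FC1} does, and Steps 1 and 3 are acceptable as outlines that defer the analytic content to citations. One caution on Step 3: inside \emph{this} paper the equality $\dim = a_{n-1}$ for general $a_1$ is \emph{deduced from} the present theorem via Cauchy--Kovalevskaya, so you must be careful to source the dimension count for the $a_1=1$ case from \cite{FC1} itself (or from the Laurent--Mebkhout index machinery) rather than from the paper's own Proposition, on pain of circularity.

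The genuine gap is Step 2, which as written does not close. Your first attempt (congruence of $x_1$-exponents mod $a_{n-1}$) fails, as you yourself note. Your fallback misidentifies the relevant coordinate: $v^j=(j,0,\ldots,0,\tfrac{\beta-j}{a_{n-1}},0)$ has $x_n$-exponent $0$, not $\tfrac{\beta-j}{a_{n-1}}$; the entry $\tfrac{\beta-j}{a_{n-1}}$ sits in position $n-1$. And your final retreat to ``all but countably many $\beta$'' is not good enough, since the theorem asserts the result for \emph{all} $\beta\in\CC$. The correct one-line argument, which the paper itself uses in the analogous situation (``their supports are pairwise disjoint''), is the following: every monomial of $\varphi^{(j)}$ has $x_{n-1}$-exponent $\tfrac{\beta-j}{a_{n-1}}-m_{n-1}$ with $m_{n-1}\in\NN$, hence lying in the coset $\tfrac{\beta-j}{a_{n-1}}+\ZZ$ of $\CC/\ZZ$; for $0\le j\neq j'\le a_{n-1}-1$ the difference $\tfrac{j'-j}{a_{n-1}}$ is not an integer, so these cosets are pairwise distinct and the supports of the $\varphi^{(j)}$ are pairwise disjoint for every $\beta$. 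Since each $\varphi^{(j)}$ is nonzero (the term ${\bf m}=0$ contributes $x^{v^j}$ with coefficient $1$), disjointness of supports gives linear independence for all $\beta$, which is what the theorem requires. Without this (or an equivalent) argument, your proof establishes at most generic-$\beta$ independence and does not prove the stated result.
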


It is also useful to consider the immersion $i: \CC^3 \hookrightarrow X$ defined by the equations $x_2=\cdots=x_{n-2}=0$ and the restriction $\rho'$ with respect to this immersion (coordinates in $\CC^3$ are $(x_1,x_{n-1},x_n)$).

The series $\varphi^{(j)}_{A,\beta}(x_1,0,\ldots,0,x_{n-1},x_n) $ equals precisely $\varphi^{(j)}_{{(1,a_{n-1},a_n),\beta}}(x_1,x_{n-1},x_n)$ so the restriction defines an isomorphism
\begin{equation}\label{restriction-x2-x_{n-2}} \cH om_{\cD_X}(\cM_A(\beta),\cO_{\widehat{X|Y}}(s)) \xrightarrow[\phantom{xxxxxx}]{\rho'}
\cH om_{\cD_{\CC^3}}(\cM_{(1,a_{n-1},a_n)}(\beta),\cO_{\widehat{\CC^3|Y_1}}(s)) \end{equation} of the corresponding stalks at any point in  $Y_1\setminus Z_1$ for all $\beta \in \CC$ and $s\in \RR$ where we denote $Y_1,Z_1$ the subspaces in $\CC^3$ with equations $x_n=0$ and $x_{n-1}=0$.

\begin{remark} So far we have assumed $n\geq 3$. The case $n=2$ is special and will be treated now, following \cite{FC2}. We can drop in this case the assumption on $a_1$ and simply write $a=a_1, b=a_2$, with $1\leq a<b$ and $\gcd(a,b)=1.$

The corresponding $\Gamma$--series have a slightly different shape (see \cite{FC2}): For $j=0,\ldots,a-1$, consider $w^{j}=(\frac{\beta - j b}{a},j)\in \CC^2$ and
\begin{equation} \label{def-Gamma-series-a-b} \psi_{A,\beta}^{(j)} = x^{w^{j}} \sum_{m\geq 0} \frac{[\frac{\beta -j b}{a}]_{bm}}{[am+j]_{am}} x_1^{-bm} x_2^{am}.\end{equation} \end{remark}

We have the following
\begin{proposition}\label{basis-gevrey-solutions-dim-2} {\rm \cite[Prop. 5.3 and 5.4]{FC2}}
Write $X=\CC^2$, $Y=(x_2=0)\subset X$. The set (of germs) of Gevrey series  $\{\psi_{{A},\beta}^{(j)}\, \vert \, j=0,\ldots, a-1\}$ is a basis of the stalk of $\cH om_{\cD_{X}}(\cM_{A}(\beta), \cO_{\widehat{{X} \vert { Y}}}(s))$ at any point in ${Y}\setminus \{(0,0)\}$, for any real number $s\geq \frac{b}{a}$ and any $\beta \in \CC$.
\end{proposition}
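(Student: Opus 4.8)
The plan is to establish Proposition~\ref{basis-gevrey-solutions-dim-2} in three stages: first verify that each $\psi_{A,\beta}^{(j)}$ is a Gevrey series solution of index exactly $\frac{b}{a}$ (so it lies in $\cO_{\widehat{X\vert Y}}(s)$ for all $s\geq \frac{b}{a}$), then show the $a$ series are linearly independent, and finally show they span the full solution space by a dimension count. For the first stage, I would observe that $\psi_{A,\beta}^{(j)}$ has the form $x^{w^j}$ times a one-variable power series in the monomial $x_1^{-b}x_2^a$ (up to the monomial prefactor), and that by \cite[Proposition 3.4.13]{SST} it is annihilated by $H_A(\beta)$ precisely because the negative support of $w^j$ is minimal — which one checks directly from the shape of $w^j=(\tfrac{\beta-jb}{a},j)$ and the fact that $\ker_\ZZ(A)=\ZZ\cdot(-b,a)$. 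The Gevrey index computation is the routine core: using the Pochhammer quotient $\frac{[\frac{\beta-jb}{a}]_{bm}}{[am+j]_{am}}$ and Stirling asymptotics, one sees the coefficient of $x_2^{am}$ grows like $(am)!^{\,b/a-1}$ up to subexponential factors, so dividing by $(am)!^{\,s-1}$ gives a convergent series exactly when $s\geq \frac{b}{a}$ and a divergent one for $s<\frac{b}{a}$; this pins the index at $\frac{b}{a}$.

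For linear independence, the key point is that the leading monomials $x^{w^j}=x_1^{(\beta-jb)/a}x_2^{j}$ for $j=0,\ldots,a-1$ have pairwise distinct exponents of $x_2$ (namely $0,1,\ldots,a-1$), and more precisely distinct classes modulo $a\ZZ$ since all subsequent terms in $\psi^{(j)}$ shift the $x_2$-exponent by multiples of $a$. Hence a vanishing linear combination forces each coefficient to vanish by separating according to the residue of the $x_2$-exponent mod $a$. This is essentially immediate once the shape of the series in \eqref{def-Gamma-series-a-b} is in hand.

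For spanning, I would invoke the dimension formula: the dimension of the space of Gevrey solutions along $Y$ at a generic point is $a_{n-1}$ in the one-row case, which for $n=2$, $A=(a,b)$ reads $a_{n-1}=a$. This is precisely the count established via the Gevrey Cauchy--Kovalevskaya argument referenced in the introduction and in \cite{FC1}; alternatively one can cite \cite[Prop. 5.3 and 5.4]{FC2} directly since the proposition is attributed there. Since we have produced $a$ linearly independent elements of a space of dimension $a$, they form a basis. Finally one must check that all of this holds at \emph{any} point of $Y\setminus\{(0,0)\}$, not just a generic one: this follows because the coefficients $f_m(x_1)$ of $\psi^{(j)}$ are (Laurent) monomials in $x_1$, hence holomorphic and nonvanishing on $\CC^*$, so the stalk description is constant along $Y\setminus\{(0,0)\}$.

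The main obstacle is the Gevrey index estimate — getting the precise growth rate $(am)!^{\,b/a-1}$ of the coefficients from the Pochhammer quotient, handling the two cases according to whether $\frac{\beta-jb}{a}\in\NN$ (where the series truncates to a polynomial and the index statement is vacuous) or not, and confirming both the convergence for $s\geq\frac{b}{a}$ and the sharpness (non-membership in $\cO_{\widehat{X\vert Y}}(s')$ for $s'<\frac{b}{a}$). Everything else — solution property, linear independence, the dimension input — is either a direct application of \cite[Proposition 3.4.13]{SST} or a citation of \cite{FC1,FC2}.
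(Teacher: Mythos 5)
The paper does not actually prove this proposition: it is imported verbatim from the reference \cite[Prop.\ 5.3 and 5.4]{FC2}, so there is no internal proof to compare yours against. That said, your outline is the standard argument and, as far as one can tell, is essentially the route taken in the cited source: (i) the solution property via \cite[Proposition 3.4.13]{SST} and minimality of the negative support of $w^j=(\tfrac{\beta-jb}{a},j)$ (which indeed reduces to checking that one cannot empty $\{1\}$ from $\operatorname{nsupp}(w^j)$ by adding a multiple of $(-b,a)$ without making the second entry a negative integer); (ii) the Gevrey bound from the Stirling estimate $[z]_{bm}/[am+j]_{am}\sim (bm)!/(am)!\sim ((am)!)^{b/a-1}C^m$; (iii) independence from the disjointness of the $x_2$-supports, which lie in distinct residue classes $j\bmod a$; (iv) spanning by the dimension count $\dim=a$. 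Two small remarks. First, for step (iv) you should lean on the Cauchy--Kovalevskaya dimension count (equation (\ref{dim=a_n-1}) of this paper, or \cite{FC1}) rather than on \cite{FC2} itself, since citing the latter for the dimension makes the argument circular; you mention both options, and only the first is a genuine proof. Second, the sharpness of the index ($\psi^{(j)}\notin\cO_{\widehat{X\vert Y}}(s')$ for $s'<\tfrac{b}{a}$) is not needed for the proposition as stated and does require the caveat you give about the polynomial case $\tfrac{\beta-jb}{a}\in\NN$; apart from that, the only real work is the growth estimate, which you have correctly identified and sketched. I see no gap.
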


For a generic value of the parameter $\beta$, $\varphi_{(1,a,b)}^{(j)}(x_0,x_1,x_2)$ restrict by setting $x_0=0$ to another basis of the solution space in Proposition \ref{basis-gevrey-solutions-dim-2}. The shape of the base change is a reindexation $j\to j'$ composed with a diagonal invertible matrix $\varphi_{(1,a,b)}^{(j)}(0,x_1,x_2)=\lambda_j \psi_{A,\beta}^{(j')}$. This fact and more, is explained in Proposition \ref{varpi-for-k-geq-1} 1) below.

\subsection{Case $a_1>1$}  Recall that $X=\CC^n$, $Y \subset X$ (resp. $Z \subset X$) is defined by $x_n=0$ (resp. $x_{n-1}=0$). First of all, we follow \cite[Rk. 5.4]{FC1} to prove the following equality
\begin{proposition}
\begin{equation}\label{dim=a_n-1}
\dim_\CC\left(\cH om_{\cD_{X}}(\cM_{A}(\beta), \cO_{\widehat{X\vert Y}}(s))_p\right) = a_{n-1} \end{equation} if $p\in Y \setminus Z$ and $s\geq \frac{a_n}{a_{n-1}}$ for any $\beta \in \CC$.
\end{proposition}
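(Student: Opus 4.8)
The plan is to reduce the general case $a_1 > 1$ to the already-established case $a_1 = 1$ treated in Theorem \ref{basis-gevrey-solutions}, exploiting the fact that the dimension of the Gevrey solution space only depends on the combinatorial data $a_{n-1}$ and not on $\beta$ nor on whether $a_1 = 1$. The natural device, following \cite[Rk. 5.4]{FC1}, is to pass from the matrix $A = (a_1, a_2, \ldots, a_n)$ to the matrix $A' = (1, a_1, a_2, \ldots, a_n) \in \ZZ^{n+1}$, which has first entry $1$, and to compare the two systems. Concretely, I would introduce the coordinate $x_0$ in $\CC^{n+1}$, write $X' = \CC^{n+1}$ with hyperplanes $Y' = (x_n = 0)$ and $Z' = (x_{n-1} = 0)$, and apply Theorem \ref{basis-gevrey-solutions} to $A'$: at a generic point of $Y' \setminus Z'$ the Gevrey solution space of $\cM_{A'}(\beta)$ has dimension equal to the next-to-last entry of $A'$, which is $a_{n-1}$.

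The heart of the argument is then to show that the restriction map $\rho$ associated with the immersion $\CC^n \hookrightarrow \CC^{n+1}$, $x_0 = 0$, induces an isomorphism
\begin{equation*}
\cH om_{\cD_{X'}}(\cM_{A'}(\beta), \cO_{\widehat{X'\vert Y'}}(s))_{p'} \xrightarrow{\ \sim\ } \cH om_{\cD_X}(\cM_A(\beta), \cO_{\widehat{X\vert Y}}(s))_{p}
\end{equation*}
for generic $p \in Y \setminus Z$ (with $p'$ the corresponding point with $x_0 = 0$) and all $s \geq a_n/a_{n-1}$. Surjectivity onto a basis is the concrete part: the $\Gamma$-series $\varphi^{(j)}_{A',\beta}$ for $A'$, when restricted by $x_0 = 0$, should produce (up to reindexing $j \mapsto j'$ and multiplication by nonzero constants $\lambda_j$) the $\Gamma$-series $\varphi^{(j')}_{A,\beta}$ for $A$ — exactly the phenomenon already observed in the $n=2$ case in the paragraph preceding this Proposition and formalized later in Proposition \ref{varpi-for-k-geq-1}. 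Since the $\varphi^{(j')}_{A,\beta}$ span the target (this is the $a_1 > 1$ analogue of \eqref{def-Gamma-series-1-a2-an}, the $\Gamma$-series built from the vectors $v^j$), and since $\rho$ visibly sends Gevrey series of order $\leq s$ to Gevrey series of order $\leq s$ (setting a variable to zero cannot worsen the Gevrey growth in $x_n$), the map $\rho$ is onto a spanning set, hence surjective. Injectivity then follows because the source has dimension at most $a_{n-1}$: indeed the same restriction-of-variables reasoning as in \eqref{restriction-x2-x_{n-2}} bounds $\dim \cH om_{\cD_{X'}}(\cM_{A'}(\beta), \cO_{\widehat{X'\vert Y'}}(s))_{p'}$ by $a_{n-1}$ via Theorem \ref{basis-gevrey-solutions}, so an epimorphism between spaces of the same finite dimension is an isomorphism, and in particular the target has dimension exactly $a_{n-1}$.

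I expect the main obstacle to be verifying cleanly that the restriction $x_0 = 0$ of the $\Gamma$-series for $A'$ really lands on the $\Gamma$-series for $A$ — this requires matching the index sets $N_{v^j}$ for the two matrices (the lattice $\ker_\ZZ(A')$ versus $\ker_\ZZ(A)$, and how the negative-support condition and the inequality $j + a_{n-1} m_{n-1} \geq \sum_{i \neq n-1} a_i m_i$ transform when the component associated with $x_0$ is killed), and tracking the Pochhammer coefficients $\Gamma[v;u]$ to produce the scalars $\lambda_j$ and the reindexation. One must also be careful that "generic point of $Y \setminus Z$" is compatible under the immersion, i.e.\ that a generic $p$ in $Y \setminus Z \subset \CC^n$ corresponds to a point $p'$ avoiding whatever Zariski-closed bad locus is needed for Theorem \ref{basis-gevrey-solutions} applied to $A'$; this is routine since $Z' \cap \{x_0 = 0\} = Z$ and the origin maps to the origin. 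A cleaner, nearly coordinate-free alternative, if one wishes to avoid the explicit series bookkeeping, is to invoke the Gevrey Cauchy–Kovalevskaya argument already cited in the introduction (from \cite{FC1}), which directly gives $a_{n-1}$ as the rank; but presenting the restriction argument has the advantage of also preparing the comparison with integral representations used later in the paper.
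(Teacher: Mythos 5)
Your overall strategy (pass to $A'=(1,a_1,\dots,a_n)$ and transport the known dimension $a_{n-1}$ back to $A$) is the right one, and your closing remark about the Gevrey Cauchy--Kovalevskaya theorem is in fact the route the paper takes. But the argument you actually develop --- that the \emph{naive} restriction $\rho\colon\varphi(x_0,x)\mapsto\varphi(0,x)$ of solution germs is an isomorphism onto the solution space of $\cM_A(\beta)$ --- has two genuine gaps. First, it is simply false for all $\beta$: when $\beta\in\NN\setminus\NN A$ exactly one of the restricted $\Gamma$-series $\varphi^{(j)}_{A',\beta}(0,x)$ vanishes identically (one of your constants $\lambda_j$ is $0$), and the image of $\rho$ has codimension $1$ in the target; this is precisely the content of Remark \ref{basis-phi(0,x)-special-case}. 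Second, your surjectivity step is circular: you invoke the fact that the $\Gamma$-series attached to $A$ (with $a_1>1$, $n\geq 3$) span the target, but the paper establishes no such statement independently --- Theorem \ref{basis-phi(0,x)-generic-case} derives the basis property of the $\varphi^{(j)}(0,x)$ \emph{from} the dimension formula \eqref{dim=a_n-1}, not the other way around. More fundamentally, surjectivity of a restriction-of-solutions map is exactly the hard analytic content here: a Gevrey solution of $\cM_A(\beta)$ on $X$ has no a priori extension to a Gevrey solution of $\cM_{A'}(\beta)$ on $X'$.

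The paper's proof avoids both problems by using the Cauchy--Kovalevskaya isomorphism $CK^s_{X',X}$ (via \cite[Prop.~4.2]{Castro-Takayama} and \cite[Cor.~2.2.4]{Laurent-Mebkhout2}), which identifies the restricted solution sheaf of $\cM_{A'}(\beta)$ with the Gevrey solutions of the $\cD$-module restriction $\cM_{A'}(\beta)_{|X}$ --- a different object from what your $\rho$ sees. The key input is then \cite[Th.~5.1]{FC1}: $\cM_{A'}(\beta)_{|X}\cong\cM_A(\beta')$ where $\beta'=\beta$ only for $\beta$ outside a finite set $\sp(A)$. This parameter shift is the subtlety your approach misses entirely, and it is why the paper needs a final step for $\beta^*\in\sp(A)$, using the contiguity isomorphisms $\cdot\,\partial^{\gamma}\colon\cM_A(\beta^*)\to\cM_A(\beta^*+A\gamma)$ of Saito and Berkesch to move to a good parameter. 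If you want to salvage your write-up, replace the naive $\rho$ by $CK^s$ composed with the identification of the $\cD$-module restriction, and add the contiguity argument for the exceptional parameters.
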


\begin{proof}
We apply, among other results, Cauchy-Kovalevskaya's Theorem for Gevrey series. We consider $A'=(1,\, a_1, \ldots, a_n)$ and the hypergeometric system $\cM_{A'}(\beta)$ on $X':=\CC^{n+1}$ with coordinates $(x_0,x_1,\ldots,x_n)$.
We denote by $Y'\subset X'$ (resp. $Z'\subset X'$) the hyperplane $x_n=0$ (resp. $x_{n-1}=0$) and we identify  $X\subset X'$ with the hyperplane $x_0=0$. Notice that $Y=Y'\cap X$ and $Z=Z'\cap X$.

By  \cite[Proposition 4.2]{Castro-Takayama} we can apply Cauchy-Kovalevskaya's Theorem for Gevrey series solutions (see \cite[Corollary 2.2.4]{Laurent-Mebkhout2}) to deduce that there exists a Cauchy-Kovalevskaya's isomorphism $CK^s_{X',X}$
$$ \cH om_{\cD_{X'}}(\cM_{A'}(\beta), \cO_{\widehat{X'\vert Y'}}(s))_{| X} \xrightarrow[\phantom{xxxxxxxxx}]{CK^s_{X',X}} \cH om_{\cD_{X}}(\cM_{A'}(\beta)_{| X}, \cO_{\widehat{X\vert Y}}(s))$$ where $\cM_{A'}(\beta)_{| X}$ stands for the restriction in the category of $\cD$--modules. This is true for any $s\in \RR$ and for any $\beta$ and even if  $\gcd(a_1,\ldots,a_n)\neq 1$.
%\paco{For applying CK Theorem We do not need $\gcd{a_1,\ldots,a_n}= 1$.}

We write $CK^s=CK_{X',X}^s$ is no confusion is possible. The isomorphism $CK^s$ is induced by the inclusion $X \subset X'$,
  by the action of restriction on the modules involved.
% and so if $\varphi(x_0,x_1,\ldots,x_n)$ is a solution in the first space at a point $p\in X$, we have $CK^s(\varphi)(x_1,\ldots,x_n) = \varphi(0,x_1,\ldots,x_n)$. \paco{double check!}.
Theorem \cite[Th. 5.1]{FC1}
%\paco{This theorem uses $\gcd(a_1,\ldots,a_n)=1$}
states that for any $\beta\in \CC$ there exists $\beta'\in \CC$ such that the restriction $\cM_{A'}(\beta)_{| X}$ is isomorphic to the hypergeometric $\cD_X$--module $\cM_{A}(\beta')$. Moreover, by the same Theorem \cite[Th. 5.1]{FC1} one can take $\beta'=\beta$ for all but finitely many $\beta$. We denote by $\sp(A)$ the finite set of $\beta\in \CC$ such that $\beta' \neq \beta$. In particular, the isomorphism $CK^s$ and  Theorem \ref{basis-gevrey-solutions} prove equality (\ref{dim=a_n-1}) for $p\in Y\setminus Z$, $s\geq \frac{a_n}{a_{n-1}}$ and $\beta \not\in \sp(A)$.

Assume now $\beta^*\in \sp(A)$. We can take $\beta = \beta^* +A'\gamma'=\beta^* + A\gamma$ for a suitable $\gamma'=(0,\gamma)\in \NN^{n+1}\cup (-\NN)^{n+1}$ in such a way that $\beta \not\in \sp(A)$ and so the corresponding morphism  $CK^s$ is an isomorphism. By using \cite[Th. 2.1]{Saito01}, \cite[Th. 6.5]{Berkesch} one has that the morphism  $\cdot\, \partial^\gamma : \cM_A(\beta^*) \rightarrow \cM_A(\beta)$ if $\gamma \in \NN^n$ (resp. $\cdot\,\partial^{-\gamma} : \cM_A(\beta) \rightarrow \cM_A(\beta^*)$ if $\gamma \in (-\NN)^n$) is an isomorphism. This proves equality (\ref{dim=a_n-1}) for $\beta^* \in \sp(A)$.
\end{proof}

Once the equality (\ref{dim=a_n-1}) is established we give a description of a basis of the solution space of $\cM_{A}(\beta)$ in $\cO_{\widehat{X\vert Y}}(s))$. We use the restriction morphism
\begin{equation}\label{restriction-1-a1-an-to-a1-an}
\cH om_{\cD_{X'}}(\cM_{A'}(\beta), \cO_{\widehat{X'\vert Y'}}(s))_{|X} \xrightarrow[\phantom{xxxxxxx}]{\rho^s_{X',X}}\cH om_{\cD_{X}}(\cM_{A}(\beta), \cO_{\widehat{X\vert Y}}(s))\end{equation}
for $\beta \in \CC$ and  $s\in \RR$. This morphism is well defined and it is induced by the restriction to $x_0=0$. It is useful to write $x'=(x_0,x)$ and $x=(x_1,\ldots,x_n)$. Then $\rho^s_{X',X}(\varphi(x'))=\varphi(0,x)$, since if $\varphi(x')$ is a solution in the first space then $\varphi(0,x)$ is a solution in the second one. Notice that it is an approach of restriction that is different from the one by the $CK$'s.

We now consider the basis $\{\varphi^{(j)}_{A',\beta}(x_0,x)\}_{j=0}^{a_{n-1}-1}$ of germs (at a point in $Y\setminus Z \subset Y'\setminus Z'$) of Gevrey series solutions (of order $\leq s$) of $\cM_{A'}(\beta)$ described in Theorem \ref{basis-gevrey-solutions}.% for  $s\geq \frac{a_n}{a_{n-1}}$ and for all $\beta \in \CC$.
We simply write $\varphi^{(j)}=\varphi^{(j)}_{A',\beta}$. The terms in ${x_{n-1}^{\frac{-\beta+j}{a_{n-1}}}} \varphi^{(j)} $ have the form $$\Gamma[v^{(j)};u({\bf m})]x_0^j(x')^{u({\bf m})}=\Gamma[v^{(j)};u({\bf m})] x^{j+a_{n-1}m_{n-1} - \sum_{i\neq n-1} a_i m_i}_0 x_1^{m_1}\cdots x_{n-2}^{m_{n-2}} x_{n-1}^{-m_{n-1}} x_n^{m_n}$$ where $u({\bf m})$ is a general element of $\ker_\ZZ(A')\subset \ZZ^{n+1}$. The  summation in $\varphi^{(j)}$ (see (\ref{def-Gamma-series-1-a2-an})) is taken over the set $\{{\bf m} \in \NN^{n}\,\vert \, j+a_{n-1}m_{n-1} \geq \sum_{i\neq n-1} a_i m_i\}$.

It is useful to write the formal expansion of $\varphi^{(j)}(0,x)$. According to what is said before we have

\begin{equation} \label{varphi(0,x)} \varphi^{(j)}(0,x) = x_{n-1}^{\frac{\beta -j}{a_{n-1}}} \sum_{\stackrel{m_1,\ldots, m_{n-1},m_n \geq 0}{j+a_{n-1}m_{n-1} = {\sum_{i\neq n-1}
a_i m_i}}} \frac{[\frac{\beta-j}{a_{n-1}}]_{m_{n-1}} j !}{m_1 !\cdots m_{n-2} !m_n!} x_1^{m_1}\cdots x_{n-2}^{m_{n-2}} x_{n-1}^{-m_{n-1}}x_{n}^{m_{n}}.\end{equation}

%if $\frac{\beta -j}{a_{n-1}} \not\in \NN$ and $N_{v^{(j)}} = \{{\bf m} \in \NN^{n}\,\vert \, j+a_{n-1}m_{n-1} \geq \sum_{i\neq n-1} a_i m_i, \frac{\beta -j}{a_{n-1}}\geq m_{n-1}\}$ if $\frac{\beta -j}{a_{n-1}} \in \NN$.

%In particular, if $\frac{\beta - j}{a_{n-1}}\not\in \NN$ then the coefficient $\Gamma[v^{(j)}; u({\bf m})]$ is not zero and the expression of $\psi^{(j)}$ (and a fortiori of $\varphi^{(j)}$) contains countably infinite many terms.

%The terms of the restriction ${x_{n-1}^{\frac{-\beta+j}{a_{n-1}}}} \varphi^{(j)}(0,x) $ have the form $\Gamma[v^{(j)};u({\bf m})] x_1^{m_1}\cdots x_{n-2}^{m_2} x_{n-1}^{-m_{n-1}} x_n^{m_n}$ for ${\bf m}$ in the set $\{{\bf m} \in \NN^{n}\,\vert \, j+a_{n-1}m_{n-1} = \sum_{i\neq n-1} a_i m_i\}$.
Notice that the family of non zero $\varphi^{(j)}(0,x)$ is $\CC$--linearly independent because their supports are pairwise disjoint.
%\paco{Maybe we should say now that there exists $j$ such that the corresponding restriction is zero if and only if $\beta \in \NN \setminus \NN A$}
This, and equality (\ref{dim=a_n-1}), proves the following (see \cite[Remark 5.4]{FC1})

\begin{theorem}\label{basis-phi(0,x)-generic-case}
Assume that $\varphi^{(j)}(0,x)$ is non zero for $j=0,\ldots,a_{n-1}-1$. Then the set (of germs of) Gevrey series $\{\varphi^{(j)}(0,x)\, \vert\, j=0, \ldots,a_{n-1}-1\}$ is a basis of the stalk of $\cH om_{\cD_{X}}(\cM_{A}(\beta), \cO_{\widehat{X\vert Y}}(s))$ at any point in $Y\setminus Z$ for  $s\geq \frac{a_n}{a_{n-1}}$.
\end{theorem}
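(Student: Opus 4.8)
The plan is to obtain the statement as a direct consequence of the dimension equality (\ref{dim=a_n-1}) together with the explicit expansion (\ref{varphi(0,x)}); the genuinely substantive inputs---the dimension count via the Gevrey Cauchy--Kovalevskaya isomorphism and Theorem \ref{basis-gevrey-solutions} describing the basis in the case $a_1=1$---are already available. So the proof should reduce to a short verification that the $a_{n-1}$ series $\varphi^{(j)}(0,x)$ are solutions of the expected Gevrey order, together with their linear independence, and then a dimension count.

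First I would check that each $\varphi^{(j)}(0,x)$, $j=0,\dots,a_{n-1}-1$, is a germ at $p$ of a Gevrey series solution of $\cM_A(\beta)$ of order $\leq s$. By Theorem \ref{basis-gevrey-solutions} applied to $A'=(1,a_1,\dots,a_n)$, the series $\varphi^{(j)}=\varphi^{(j)}_{A',\beta}$ is, at a point of $Y'\setminus Z'$ (which contains $Y\setminus Z$, since $Y=Y'\cap X$, $Z=Z'\cap X$ and $X=\{x_0=0\}$, so that $p\in Y\setminus Z$ cannot lie in $Z'$), a germ of such a solution of $\cM_{A'}(\beta)$ for every $s\geq a_n/a_{n-1}$. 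Applying the restriction morphism $\rho^s_{X',X}$ of (\ref{restriction-1-a1-an-to-a1-an})---induced by setting $x_0=0$---yields $\varphi^{(j)}(0,x)=\rho^s_{X',X}(\varphi^{(j)})\in\cH om_{\cD_X}(\cM_A(\beta),\cO_{\widehat{X|Y}}(s))_p$. The one point deserving a word is the well-definedness of $\rho^s_{X',X}$: writing the Euler operator of $A'$ as $E'=x_0\partial_0+E$, with $E$ the Euler operator of $A$, and using the inclusion $\ker_\ZZ A\hookrightarrow\ker_\ZZ A'$ (so that the binomials of $I_A$ sit inside $I_{A'}$), one sees that restricting to $x_0=0$ a series annihilated by $H_{A'}(\beta)$---which is a power series in $x_0$---produces a series annihilated by $H_A(\beta)$; and restriction to $x_0=0$ of the coefficients $f_m(x_0,x_1,\dots,x_{n-1})$ of a Gevrey-$s$ series along $Y'$ preserves the defining growth condition, since the associated series $\sum_m m!^{1-s}f_m x_n^m$, convergent near $p$, restricts to a convergent series near $p$.

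It then remains to combine linear independence with the dimension count. From (\ref{varphi(0,x)}) every monomial of $\varphi^{(j)}(0,x)$ carries the factor $x_{n-1}^{(\beta-j)/a_{n-1}}$ times an integral power of $x_{n-1}$, and for $0\leq j<j'\leq a_{n-1}-1$ the number $(j'-j)/a_{n-1}$ is never a nonzero integer, so the supports of $\varphi^{(j)}(0,x)$ and $\varphi^{(j')}(0,x)$ are disjoint; hence the nonzero ones among $\varphi^{(0)}(0,x),\dots,\varphi^{(a_{n-1}-1)}(0,x)$ are $\CC$-linearly independent, and by hypothesis all $a_{n-1}$ of them are nonzero. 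Since the stalk $\cH om_{\cD_X}(\cM_A(\beta),\cO_{\widehat{X|Y}}(s))_p$ has dimension $a_{n-1}$ at $p\in Y\setminus Z$ for $s\geq a_n/a_{n-1}$ by (\ref{dim=a_n-1}), these $a_{n-1}$ linearly independent solutions form a basis. I do not anticipate a real obstacle: the hard work is already packaged in (\ref{dim=a_n-1}) and Theorem \ref{basis-gevrey-solutions}, and what is left is bookkeeping. The single subtlety worth flagging---and the reason the hypothesis that every $\varphi^{(j)}(0,x)$ be nonzero cannot be omitted---is that the restriction $x_0\mapsto 0$ may annihilate $\varphi^{(j)}$ for some $j$ (either because the constraint $j+a_{n-1}m_{n-1}=\sum_{i\neq n-1}a_i m_i$ in (\ref{varphi(0,x)}) has no nonnegative solution, or because of cancellation in the Pochhammer factors when $(\beta-j)/a_{n-1}\in\NN$), in which case fewer than $a_{n-1}$ of these $\Gamma$-series survive and a different choice of basis would be needed; the hypothesis rules this out.
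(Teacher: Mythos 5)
Your proposal is correct and follows essentially the same route as the paper: the paper establishes the statement by noting that the nonzero $\varphi^{(j)}(0,x)$ are $\CC$-linearly independent because their supports (read off from (\ref{varphi(0,x)}), where distinct $j$ give $x_{n-1}$-exponents differing by a non-integer) are pairwise disjoint, and then invoking the dimension equality (\ref{dim=a_n-1}); the fact that each $\varphi^{(j)}(0,x)$ is a solution is obtained, as you do, from the restriction morphism $\rho^s_{X',X}$ of (\ref{restriction-1-a1-an-to-a1-an}). Your additional verifications (well-definedness of $\rho^s_{X',X}$, preservation of the Gevrey condition under $x_0\mapsto 0$) are details the paper leaves implicit but are consistent with its argument.
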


\begin{remark}\label{basis-phi(0,x)-special-case}
Assume now that for some $j=0,\ldots,a_{n-1}-1$ one has $\varphi^{(j)}(0,x)=0$. By \cite[Rk. 5.4]{FC1} this condition happens if and only if $\beta \in \NN \setminus \NN A$.
% \paco{we add a proof in this preliminary version; the proof should be deleted in the last version}. Assume that there exists $j=0,\ldots,a_{n-1}-1$ such that $\varphi^{(j)}(0,x)=0$. Then, according to the expression (\ref{varphi(0,x)}), for any ${\bf m}\in \NN^n$ such that $j+a_{n-1}m_{n-1} = \sum_{i\neq n-1}
%a_i m_i $ one has $[\frac{\beta-j}{a_{n-1}}]_{m_{n-1}}=0$. In particular $h:=\frac{\beta-j}{a_{n-1}}\in \NN$ and $0 \leq h < m_{n-1}$. If $\beta \in \NN A$ then $\beta = \sum_{i=1}^{n} b_i a_i$ for some $b_i\in \NN$ and we can write $j+(h-b_{n-1})a_{n-1} = \sum_{i\neq n-1} b_i a_i$. One has $h-b_{n-1}\geq 0$ (otherwise the left hand side of the last equality is a strictly negative number while  the right hand side is not) and this contradicts our assumption because $[h]_{h-b_{n-1}}\not=0$. Reciprocally, if $\beta \in \NN \setminus \NN A$ the $\beta = j + a_{n-1}h$ for unique $0\leq j <a_{n-1}$ and $h \in \NN$.  Then $\varphi^{(j)}$ is a polynomial and moreover, if ${\bf m}\in \NN^n$ is such that $j+a_{n-1}m_{n-1}=\sum_{i\not= n-1} a_i m_i$, then $\beta = j+a_{n-1}h = a_{n-1}(h-m_{n-1}) + \sum_{i\not=n-1} a_i m_i$. So, since $\beta \not\in \NN A$, $h-m_{n-1} <0$ and then $[\frac{\beta-j}{a_{n-1}}]_{m_{n-1}}=0$. This implies that $\varphi^{(j)}(0,x_1,x_2)=0$.
Furthermore this $j$ is then unique for a fixed $\beta$ and the image of the morphism $\rho_{X',X}^s$ has codimension 1 in the stalk of $\cH om_{\cD_{X}}(\cM_{A}(\beta), \cO_{\widehat{X\vert Y}}(s))$ at any point in $Y\setminus Z$ and for all $s\geq \frac{a_n}{a_{n-1}}$. We will show in Theorem \ref{a.e.are-basis-a1-an} how to describe a basis of this last solution space for all $\beta\in \CC$.
\end{remark}

%\pacoverde{say something more about a basis of this last space...}
%To find a basis of this last vector space we proceed as before: there is $\gamma'=(0,\gamma)\in \NN^{n+1}$ such that $\beta'=\beta-A'\gamma' = \beta - A \gamma\in \ZZ_{<0}$. We consider the following diagram:
%
%
%$$\xymatrixcolsep{4pc}
%\xymatrix{
%\cH om_{\cD_{X'}}(\cM_{A'}(\beta'), \cO_{\widehat{X'\vert Y'}}(s))_p \ar[r]^{{\rho^s_{X',X}}} &
%\cH om_{\cD_{X}}(\cM_{A}(\beta'), \cO_{\widehat{X\vert Y}}(s))_p  \\ &
%\cH om_{\cD_{X}}(\cM_{A}(\beta), \cO_{\widehat{X\vert Y}}(s))_p \ar[u]^{\overline{\cdot \partial^\gamma}}}$$
%
%The horizontal morphism is an isomorphism by Theorem \ref{basis-phi(0,x)-generic-case} and because $\beta'\not\in \NN\setminus \NN A$. The vertical morphism is the isomorphism associated with the isomorphism $\cdot \partial^\gamma : M_A(\beta') \longrightarrow M_A(\beta)$. This last isomorphism follows from the fact that $\beta, \beta' \in \ZZ \setminus \NN A$ and by applying
%\cite[Th. 2.1]{Saito01}, \cite[Th. 6.5]{Berkesch}. So the family $$\{(\overline{\cdot \partial^\gamma})^{-1}(\varphi^{(j)}_{A',\beta'}(0,x))\, \vert \, j=0,\ldots, a_{n-1}-1\}$$ is a basis of $\cH om_{\cD_{X}}(\cM_{A}(\beta), \cO_{\widehat{X\vert Y}}(s))_p$ for $p\in Y\setminus Z$ and $s\geq \frac{a_n}{a_{n-1}}$.
%\marginpar{When $s< \frac{a_{n}}{a_{n-1}}$ ...}

\subsection{The restriction to $x_0=0$ for $A'=(1,\, ka, \, kb)$}\label{restriction-to-x0}
%\paco{Take care of $A$, $A'$ ... notations. This part could be deleted later}

Let us consider the morphism $\rho^s_{X',X}$ when $X=(x_0=0) \subset X'=\CC^3$ and the matrix $A'=(1,\, ka, \, kb)$ with $1\leq a < b$, $1 < ka$ and $\gcd(a,b)=1$.  The morphism $\rho^s_{X',X}$, as defined in (\ref{restriction-1-a1-an-to-a1-an}), sends solutions of $\cM_{A'}(\beta)$ to solutions of $\cM_{(ka, kb)}(\beta)$ and this last $\cD_X$--module is isomorphic to $\cM_{(a, b)} (\frac{\beta}{k})$. In particular if $k>1$, $\rho^s_{X',X}$ is not an isomorphism, since the corresponding solutions spaces have dimensions $ka$ and $a$ respectively.  Below we describe in detail a different but related morphism involving the restrictions of the derivatives up to order $k-1$ with respect to the variable $x_0$.

So, instead of considering $A=(ka, kb)$ as before it is better to write $A=(a,b)$. Coordinates in $X'$ are $(x_0,x_1,x_2)$. %\paco{Notice that we are using here a slightly different notation than the previous one concerning $A$ and $A'=(1,A)$ ...}

For $j=0,\ldots,ka-1$ let us write $v^{(j)} =(j, \frac{\beta -j}{ka},0)$ and $$\varphi_{A',\beta}^{(j)}= x_1^{\frac{\beta -j}{ka}} \sum _{\stackrel{m_1,m_2 \geq 0}{j+kam_1\geq kbm_2}} \frac{[\frac{\beta-j}{ka}]_{m_1}j!}{m_2! (j+kam_1-kbm_2)!} x_0^{j+kam_1-kbm_2} x_1^{-m_1}x_2^{m_2}.$$

Notice that if a series $\varphi$ (as for example $\varphi_{A',\beta}^{(j)}$) is a solution of $\cM_{A'}(\beta)$ then $\restr{\frac{\partial^\ell \varphi}{\partial x_0^\ell}}{x_0=0}$ is a solution of the system $\cM_{A}(\frac{\beta-\ell}{k})$ %for $j=0,\ldots, ka-1$ and
for all $\ell\geq 0$. %$r=0,\ldots,k-1$.
We are going to compare this solution, for $\varphi=\varphi_{A',\beta}^{(j)}$, to the usual Gamma series solutions $\psi^{(j')}_{A,\beta'}(x_1,x_2)$, see (\ref{def-Gamma-series-a-b}),  of this last system $\cM_A(\beta')$ for  $\beta' = \frac{\beta -\ell}{k}$.
We consider the following $\CC$--linear map

\begin{equation} \label{varpi} \cH om_{\cD_{X'}}(\cM_{A'}(\beta), {\cO_{\widehat{X'\vert Y'}}}(s))_{\vert X}  \xrightarrow[\phantom{xxxxxxx}]{\varpi_\beta^s} \bigoplus_{\ell=0}^{k-1} \cH om_{\cD_{X}}\left(\cM_{A}\left(\frac{\beta-\ell}{k}\right), {\cO_{\widehat{X\vert Y}}}(s)\right)\end{equation}
which maps $\varphi$ to the vector $\left(\frac{\partial^{\ell}\varphi}{\partial x_0^{\ell}}(0,x_1,x_2)\right)_{\ell=0}^{k-1}$.
Here $Y'=(x_2=0)\subset X'$ and $Y=X\cap Y'$.

Notice that the morphism $\varpi_\beta^s$ is well defined and that it coincides with
the morphism (\ref{restriction-1-a1-an-to-a1-an}) when $k=1$. The following Proposition generalizes to an arbitrary $k$ what is already proved for $k=1$ in Theorem \ref{basis-phi(0,x)-generic-case} and in Remark \ref{basis-phi(0,x)-special-case}~:

\begin{proposition}\label{varpi-for-k-geq-1} 1) Assume that $\beta \not\in \NN$ or $\beta \in \bigcup_{r=0}^{k-1} (r + k \NN A)$. Then the $\CC$--linear map $\varpi_\beta^s$  is an isomorphism of vector spaces.

2) Assume $\beta \in \bigcup_{r=0}^{k-1} (r + k (\NN \setminus \NN A))$ and consider the integers $j_0,r_0,q_0$ uniquely determined by $0\leq j_0<ka$, $0\leq r_0 < k$  and  $\beta=j_0+kah=r_0+kq_0+kah$. Then the image of the map $\varpi_\beta^s$ is the codimension $1$ subspace, generated by all the $(0,\dots, 0, \psi^{(j')}_{A,\frac{\beta-r}{k}},0,\ldots , 0)$, with a non zero term in position $r$, and $(j',r)\neq (j'_0,r_0)$ for $j'_0\in \{0,\dots,a-1\}$ uniquely determined by $bj'_0\equiv q_0 \; \pmod a$.
\end{proposition}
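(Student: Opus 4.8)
The plan is to compute the image of $\varpi_\beta^s$ explicitly by expanding the restrictions $\frac{\partial^\ell \varphi_{A',\beta}^{(j)}}{\partial x_0^\ell}(0,x_1,x_2)$ and matching them, up to nonzero scalars and reindexing, against the standard $\Gamma$-series $\psi_{A,\beta'}^{(j')}$ from \eqref{def-Gamma-series-a-b}. Since the source space has dimension $ka$ (by Theorem~\ref{basis-gevrey-solutions} applied to $A'=(1,ka,kb)$) and the target is a direct sum of $k$ spaces of dimension $a$, hence also of dimension $ka$, the map $\varpi_\beta^s$ is an isomorphism precisely when it is injective, and otherwise we must locate its kernel/cokernel. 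So the first step is to fix $j\in\{0,\dots,ka-1\}$, differentiate the series for $\varphi_{A',\beta}^{(j)}$ exactly $\ell$ times in $x_0$ and set $x_0=0$: only the terms with $j+kam_1-kbm_2=\ell$ survive, which forces $\ell\equiv j\ \pmod{ka}$ up to the constraint $0\le \ell <k<ka$, so in fact $\ell$ is determined modulo $ka$ and, since $0\le \ell<k$, the surviving component is the unique $\ell\equiv j\ \pmod{k}$ only when the further divisibility $ka\mid(j-\ell)$ is compatible — i.e. I need to track carefully for which pairs $(j,\ell)$ the restriction is nonzero.

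Carrying this out, write $j = r + kq$ with $0\le r<k$; then $\frac{\partial^\ell \varphi^{(j)}_{A',\beta}}{\partial x_0^\ell}(0,x_1,x_2)$ is nonzero only for $\ell=r$, and equals (after collecting the Pochhammer symbols and the combinatorial prefactor) a nonzero multiple of $x_1^{(\beta-j)/(ka)}\sum_{m} c_m\, x_1^{-?}x_2^{?}$, which I will recognize as $\lambda_j\,\psi_{A,(\beta-r)/k}^{(j')}$ for the index $j'\in\{0,\dots,a-1\}$ determined by the congruence condition relating $q$ and $bj'$ modulo $a$ — this is where the hypothesis $bj'_0\equiv q_0\ \pmod a$ enters. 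The key identity to verify is the translation between the $A'$-exponent vector $v^{(j)}$ and the $A$-exponent vector $w^{(j')}$ at parameter $(\beta-r)/k$: one checks $\frac{(\beta-j)/(ka)}{} $ matches $\frac{(\beta-r)/k - j' b}{a}$ modulo the lattice, using $\beta - j = \beta - r - kq$ and $q\equiv b j'\ \pmod a$. The scalar $\lambda_j$ is a ratio of factorials/Pochhammer symbols and is nonzero exactly when $\psi^{(j')}$ is not forced to vanish, i.e. when $(\beta-r)/k\notin\NN$ or $(\beta-r)/k\in\NN A$.

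For part 1), under the hypothesis $\beta\notin\NN$ (so every $(\beta-r)/k\notin\NN$, each $\lambda_j\ne 0$) or $\beta\in\bigcup_{r=0}^{k-1}(r+k\NN A)$ (so even when some $(\beta-r)/k\in\NN$ it lies in $\NN A$ and $\psi^{(j')}\ne 0$), every image vector $\varpi_\beta^s(\varphi^{(j)})$ is a nonzero scalar times a single basis vector $(0,\dots,\psi_{A,(\beta-r)/k}^{(j')},\dots,0)$, and as $j$ ranges over $\{0,\dots,ka-1\}$ the pairs $(j',r)$ exhaust all of $\{0,\dots,a-1\}\times\{0,\dots,k-1\}$ bijectively — because $j\mapsto(r,q)\mapsto(r,j')$ with $q\mapsto j'$ bijective mod $a$ by $\gcd(b,a)=1$. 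Hence $\varpi_\beta^s$ sends a basis to a basis and is an isomorphism. For part 2), with $\beta = r_0+kq_0+kah$, precisely the single value $r=r_0$ gives $(\beta-r)/k = q_0+ah\in\NN$ but $\beta'\in\NN\setminus\NN A$ forces $\psi_{A,(\beta-r_0)/k}^{(j'_0)}=0$ for the index $j'_0$ with $bj'_0\equiv q_0\ \pmod a$ — using Proposition~\ref{basis-gevrey-solutions-dim-2} and the vanishing criterion recalled in Remark~\ref{basis-phi(0,x)-special-case} — so the image misses exactly that one basis vector and has codimension $1$, spanned by the remaining $(j',r)$.

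**The main obstacle** I anticipate is the bookkeeping in the second paragraph: getting the reindexation $j\to j'$ and the explicit scalar $\lambda_j$ right, i.e. correctly manipulating the Pochhammer symbols $[\tfrac{\beta-j}{ka}]_{m_1}$ and $\tfrac{j!}{m_2!(j+kam_1-kbm_2)!}$ after the substitution $j+kam_1-kbm_2=\ell=r$ and rescaling $m_1,m_2$ so that the sum takes the exact form of \eqref{def-Gamma-series-a-b} with $\beta$ replaced by $(\beta-r)/k$ and a shift of the summation index; in particular one must check that the constraint ``$j+kam_1\ge kbm_2$'' becomes exactly ``$m\ge 0$'' in the $\psi$-series and that no extra terms appear. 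The vanishing analysis (deciding exactly when $\lambda_j=0$ versus when the whole $\psi^{(j')}$ is identically zero) also needs care, since a priori a single coefficient could vanish without the series vanishing; here one uses that $\psi_{A,\beta'}^{(j')}$ is identically zero exactly when $\beta'\in\NN\setminus\NN A$ and the relevant index, which is the content already recorded in \cite{FC2} and Remark~\ref{basis-phi(0,x)-special-case}.
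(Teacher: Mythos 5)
Your overall strategy coincides with the paper's: fix $j$, write $j=kq+r$, check that only $\ell=r$ survives the restriction, match the result with $\lambda_j\,\psi^{(j')}_{A,(\beta-r)/k}$ for $bj'\equiv q\pmod a$, and use the bijection $j\leftrightarrow(j',r)$ together with the equality of dimensions $ka$ on both sides. The set-up, the reindexation, and the dimension count are all correct (the passing claim that the exponent condition forces $\ell\equiv j\pmod{ka}$ should be $\pmod k$, but you recover the right conclusion $\ell=r$ afterwards).

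The genuine gap is in the vanishing analysis, which is exactly the step that separates cases 1) and 2). Your criterion ``$\lambda_j\neq 0$ exactly when $\psi^{(j')}$ is not forced to vanish'' cannot work, because $\psi^{(j')}_{A,\beta'}$ is \emph{never} identically zero: its $m=0$ term is $x^{w^{j'}}$ with coefficient $1$ (this is also why Proposition \ref{basis-gevrey-solutions-dim-2} asserts the $\psi^{(j)}$ form a basis for \emph{all} $\beta$). In particular your assertion in part 2) that $\beta'\in\NN\setminus\NN A$ ``forces $\psi^{(j'_0)}_{A,(\beta-r_0)/k}=0$'' is false, and if it were true the image would be the whole target rather than a codimension-one subspace; Remark \ref{basis-phi(0,x)-special-case} concerns the vanishing of the \emph{restricted} series $\varphi^{(j)}(0,x)$, not of $\psi$. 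Moreover, read literally, your criterion depends only on $r$ and would kill all $a$ indices $j\equiv r_0\pmod k$, giving codimension $a$ instead of $1$. What actually has to be proved is a statement about the scalar $\lambda_j$, and it requires two separate arguments that your proposal does not supply: (i) for $j\neq j_0$ one computes the ratio of corresponding coefficients explicitly as $\Gamma(z-bm+m_1)(kq+r)!/(\Gamma(z)p(j)!)$, observes it is independent of $m$ because $bm-m_1=(q-bp(j))/a$ is constant, and checks it is nonzero; (ii) for $j=j_0$ the correct mechanism is that $\varphi^{(j_0)}_{A',\beta}$ is a \emph{polynomial} solution, so its restriction is a polynomial solution of $\cM_A\bigl(\frac{\beta-r_0}{k}\bigr)$, which vanishes precisely when $\frac{\beta-r_0}{k}\notin\NN(a,b)$; and in the opposite case $\frac{\beta-r_0}{k}\in\NN(a,b)$ one must still prove $\lambda_{j_0}\neq 0$, which the paper does by writing $q_0=-n_1a+n_2b$ with $0\le n_2<a$ and showing $h\ge n_1$, hence $[h]_{n_1}\neq 0$. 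Without (i) and (ii) the dichotomy between the two cases of the Proposition is not established.
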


%Let us write for $p=0,\ldots,a-1$, $w^{p} =(\frac{\beta' -pb}{a},p)$ and $$\varphi_{A,\beta'}^{(p)}= x_1^{\frac{\beta' -pb}{a}} \sum _{\stackrel{m \geq 0}{}} \frac{[\frac{\beta'-pb}{a}]_{bm}}{[am+p]_{am}} x_1^{-bm} x_2^{am}.$$

\begin{proof} We consider the morphism $\varpi_\beta^s$ induced on the stalks at points in $Y\setminus Z$, $Z=(x_1=0)\subset X$ and $s\geq \frac{b}{a}$.
Let us fix $j=0,\ldots,ka-1$ and write $j=kq+r$ for unique $0\leq q <a$ and $0\leq r < k$. So we have $$\frac{\beta -j}{ka} = \frac{\beta -kq-r}{ka} = \frac{\beta -r}{ka} - \frac{q}{a} = \frac{\frac{\beta -r}{k}-q}{a}.$$

The general exponent of $x_0$ in $\varphi_{A',\beta}^{(j)}$ is $j+k(am_1-bm_2) = r + k(q+am_1-b{m_2})$.
%For each $m_2\geq 0$ we write $m_2=am+p$ for $0\leq p \leq a-1$.
An exponent of $x_1$ in $\restr{\frac{\partial^\ell \varphi_{A',\beta}^{(j)}}{\partial x_0^\ell}}{x_0=0}$ for $0\leq \ell < k$, can only come from a term  in $\varphi_{A',\beta}^{(j)}$ for which the underlying exponent of $x_0$ is $\ell$. Therefore the only pairs $(m_1,m_2)\in \NN^2$ that may appear are those satisfying the relation~:
\begin{equation}\label{exponent}
j+k(am_1-bm_2)-\ell=k(q+am_1-bm_2)+r-\ell=0.
\end{equation}
This cannot happen if $\ell\neq r$ which means that $\restr{\frac{\partial^{\ell} \varphi_{A',\beta}^{(j)}}{\partial x_0^{\ell}}}{x_0=0}=0$ if $\ell \neq r.$ The equality (\ref{exponent}) is equivalent to $q=bm_2-a m_1$ and  $\ell=r$. For any $(m_1,m_2)\in \NN^2$ satisfying (\ref{exponent}), we write $m_2=am+j'$ with $0\leq j'< a$. We notice that the integer $j'$ depends only on $q$, hence on $j$ because $q=b(am+j')-am_1$ so that $a$ divides $bj'-q$. For the rest of the proof we denote it $j'=p(j)$. By a straightforward calculation the general exponent of $x_1$ in $\restr{\frac{\partial^r \varphi_{A',\beta}^{(j)}}{\partial x_0^r}}{x_0=0}$ is
$\frac{\beta -j}{ka}-m_1 = \frac{\beta -r}{ka} -\frac{bp(j)}{a}-bm$
%$$\frac{\beta -j}{ka}-m_1 = \frac{\beta -kq-r}{ka}-m_1 = \frac{\beta -r}{ka} -\frac{q}{a}-m_1 = $$ $$ \frac{\beta -r}{ka} -\frac{b(am+p(j))-am_1}{a}-m_1 = \frac{\beta -r}{ka} -\frac{bp(j)}{a}-bm$$
which equals the general exponent of $x_1$ in $\psi_{A,\beta'}^{(p(j))}$ by (\ref{def-Gamma-series-a-b}) applied to $\beta'= \frac{\beta -r}k$. The series $\varphi_{A',\beta}^{(j)}$ and $\psi_{A,\beta'}^{(p(j))}$ are indexed respectively by $(m_1,m_2)$ and $m$. Their terms are in a 1-1 correspondence through  the relations
\[
m_1=bm+\frac{bp(j)-q}a  \,\text{ and }\, m_2=am+p(j).
\]

Now we compare the corresponding coefficients in
%$\restr{\frac{1}{r!}\frac{\partial^r \varphi_{A',\beta}^{(j)}}{\partial x_0^r}}{x_0=0}$
$\restr{\frac{\partial^r \varphi_{A',\beta}^{(j)}}{\partial x_0^r}}{x_0=0}$ and in  $\psi_{A,\beta'}^{(p(j))}$.
The quotient of these two coefficients is well defined, for all $m\geq 0$, when $\beta \notin \NN$ and it is
\begin{equation}\label{quotients}\left(\frac{[\frac{\beta -kq -r}{ka}]_{m_1}(kq+r)!}{m_2!}\right)\left(\frac{[\frac{\beta'-p(j)b}{a}]_{bm}p(j)!}{(am+p(j))!}\right)^{-1}= \frac{\Gamma(z-bm+m_1)(kq+r)!}{\Gamma(z)p(j)!}
\end{equation}  where $z-1=\frac{\beta'-p(j)b}{a}= \frac{\beta-r}{ka} -\frac{p(j)b}{a}$.

This shows that this quotient is independent of the term chosen, because the integer $bm-m_1=\frac{q-bp(j)}a$ does not depend on $m,m_1$.
%Furthermore when $j=kq+r$ is fixed the relation (\ref{exponent}) reads also as $q+a(m_1-bm)-pb=0$ which determines a unique $p=p(j)$ independently of $m,m_1$.
%\paco{Say something about the linear combination of the $r$-th derivative of $\varphi^{(j)}_{A',\beta}$ in terms of %$\varphi^{(p)}_{A,\frac{\beta -r}{k}}$.}
So, if $\beta \not\in \NN$ we have found a constant $\lambda_j\in\CC^*$ such that $$\restr{\frac{\partial^r \varphi_{A',\beta}^{(j)}}{\partial x_0^r}}{x_0=0} = \lambda_j \psi^{(p(j))}_{A,\frac{\beta-r}{k}}\quad \mbox{ and }\quad \restr{\frac{\partial^{\ell} \varphi_{A',\beta}^{(j)}}{\partial x_0^{\ell}}}{x_0=0}=0 \quad  \mbox{ if } \ell \neq r.$$

When $\beta \in\NN$, we write $\beta=j_0+kah=r_0+kq_0+kah$, with unique  $0\leq r_0< k$ and $0\leq q_0< a$. If $j\not= j_0=r_0+kq_0$ the quotient in (\ref{quotients}) is still well defined for all $m$ and the relation $\lambda_j\not= 0$ still valid.

If $\beta\in r_0+k(\NN \setminus \NN (a,b))$, the quotient is still well defined but we have $\lambda_{j_0}= 0$  by Remark \ref{basis-phi(0,x)-special-case} since
$\varphi_{A',\beta}^{(j_0)}$ is a polynomial solution of $\cM_{A'}(\beta)$ and $\cM_{A}\left(\frac{\beta-r_0}{k}\right)$ has no non zero polynomial solution for $\frac{\beta-r_0}{k} \notin \NN(a,b)$.

%Assume first $\beta' = \frac{\beta - r_0}{k} = ha+ q_0\in \NN \setminus \NN A$. Then $ha+q_0= (h-n_1)a+n_2b$ with necessarily $h<n_1$. So $\left[ \frac{\beta - kq_0 - r_0}{ka}\right]_{n_1}= [h]_{n_1}=0$. This proves that the coefficient $\lambda_{j_0}$ is zero.

%another way to see that is that  $\varphi_{A',\beta}^{(j_0)}$ is a polynomial while $\varphi_{A,\beta'}^{p(j)}$ is not. So $\lambda_{j_0}=0$.

Assume now $\frac{\beta - r_0}{k}= ha+q_0\in \NN (a,b)$ so that  $ha+q_0 = ua+vb$ for some $u,v\in \NN$. We can write in a unique way $q_0 = -n_1 a + n_2 b$ with $0\leq n_2< a$. One has $(h-n_1) a + n_2b = ua + vb$ which forces $v\geq n_2$ and $h-n_1- u \geq 0$. In particular, $h\geq n_1$ and $[h]_{n_1} \neq 0$ and so $\lambda_{j_0}\not= 0$. This proves 1) and 2) taking $j'_0:=p(j_0)$.
\end{proof}

%if and only if $j=j_0$.  and $\restr{\frac{\partial^{r_0} \varphi_{A',\beta}^{(j_0)}}{\partial x_0^{r_0}}}{x_0=0}$ is a non zero polynomial if and only if $\beta'=\frac{\beta-r_0}{k}\in \NN A$ or $\beta \in r_0 + k (\NN \setminus \NN A)$ according to Remark xxx  \ref{basis-phi(0,x)-special-case}.

%In particular each  $\frac{\partial_0^{r}\varphi_{A',\beta}^{(j)}}{\partial x_0^{r}}(0,x_1,x_2)$ is a solution of the hypergeometric system $\cM_{A,\frac{\beta -r}{k}}$ that can be verify directly using the generators of the hypergeometric ideal
%$H_A(\frac{\beta -r}{k})$.

%$\varphi^{(j)}_{A',\beta}$ to the vector

%superfluous
%\begin{remark} We will see in Theorem \ref{a.e.are-basis-a1-an} how to describe a base of the stalk of solution space $$\bigoplus_{r=0}^{k-1} \cH om_{\cD_{X}}\left(\cM_{A}\left(\frac{\beta-r}{k}\right), {\cO_{\widehat{X\vert Y}}}(s)\right)$$ at a point in $Y \setminus Z$, for all $\beta\in \CC$ and $s\geq \frac{b}{a}$.
%\end{remark}

%$a\xrightarrow[\alpha]{\epsilon}b$ $a\xrightarrow[\phantom{xxxxxxxxxxxxxxxxxxxxxxxxxx}]{\epsilon}b$

\section{Hypergeometric integral of exponential type}\label{Section-hyper-geom-int-exp-type}
%We introduce new variables $t=(t_1,\ldots,t_d)$. For any $b=(b_1,\ldots,b_d)\in \ZZ^d$ we denote by $t^b$
%the Laurent monomial
%$t_1^{b_1}\cdots t_d^{b_d}$.
With the pair $(A,\beta)$ one associates the following integral (called hypergeometric integral of exponential type):
$$I_\gamma(A,\beta;x) = I(\beta; x):= \int_{\gamma} t^{-\beta -1} \exp\left(\sum_{j=1}^n x_j t^{a_j}\right) dt$$ where $A=(a_1,\ldots,a_n)$ and $\gamma$ is a cycle in
the rapid decay homology with closed support of M. Hien. We also write
$I_\gamma(\beta;x)=I(\beta; x)$
%if we want to emphasize the dependence of
%this integral expression also on $A$ and $\gamma$ or simply on $\gamma$
if there is no possible confusion on the matrix $A$.

The integral $I(\beta; x)$ satisfies the equality $P(I(\beta; x))=0$ for any $P$ in the hypergeometric ideal $H_A(\beta)$. So we consider $I(\beta; x)$ as a solution of $\cM_A(\beta)$.  Our goal is to give an asymptotic expansion of $I(\beta;x)$ as Gevrey series along the coordinate hyperplane  appearing in the singular support of $\cM_A(\beta)$.

We are going to prove in this case that all the Gevrey series solutions can be obtained as an asymptotic expansion of such an integral.
%describe in detail the case of a matrix with one row $A=(a_1,\dots , a_n)$, and
%We will give partial results and example in the case of multidimensional integration cycle \paco{Think about that ...}.

In the one row matrix case the rapid decay cycles are easy to describe and we prove first an asymptotic expansion statement. In the following proposition we consider a path $\gamma~:\RR\to \CC$ such that the arguments of $x_nt^{a_n}$ and of $x_{n-1}t^{a_{n-1}}$ both have limits in the open interval  $]{\frac{\pi}{2}},{\frac{3\pi}{2}}[$ when $t\to +\infty$ or $t\to -\infty$. For a fixed $\gamma$ this condition remains valid in some open sectors in the spaces $\CC^*$ for the variables $x_{n-1}$ and $x_n$. These paths are exactly the rapid decay cycles for the function  $t\to t^{-\beta -1} \exp\left(\sum_{j=1}^n x_j t^{a_j}\right)$ and for its restriction to the hyperplane $x_n=0$.

\begin{proposition}\label{expansion2}
The integral depending on $x=(x_1,\dots, x_n)$:
\[
I(\beta; x)=\int_{\gamma}t^{-\beta-1} \exp\left(\sum_{j=1}^n x_j t^{a_j}\right)dt
\]
admits an asymptotic expansion $\sum_{k=0}^\infty c_k(x_1,\dots, x_{n-1}) x_n^k$ \; for $x_n$ tending to zero whose coefficients are
\begin{equation}\label{coefficient-c-k}
c_k(x_1,\dots, x_{n-1})=\frac{1}{k!}\int_{\gamma}t^{-\beta-1+a_nk}\exp\left(x_1t^{a_1}+\dots +x_{n-1}t^{a_{n-1}}\right) dt.
\end{equation}
For a fixed cycle $\gamma$ this expansion is valid  in a product $\CC^{n-2}\times S_{n-1}\times S_n$ involving open sectors $S_i$ in $\CC^*$.
\end{proposition}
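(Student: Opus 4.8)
The plan is to expand the factor $\exp(x_nt^{a_n})$ in its Taylor series in $x_n$ and integrate term by term, controlling the remainder so that the resulting series is an honest asymptotic expansion. First I would write, for each $N\geq 0$,
\[
\exp\left(x_nt^{a_n}\right)=\sum_{k=0}^{N-1}\frac{x_n^k t^{a_nk}}{k!}+R_N(x_n,t),\qquad R_N(x_n,t)=\frac{x_n^Nt^{a_nN}}{(N-1)!}\int_0^1(1-\theta)^{N-1}e^{\theta x_nt^{a_n}}\,d\theta,
\]
so that
\[
I(\beta;x)=\sum_{k=0}^{N-1}\frac{x_n^k}{k!}\int_{\gamma}t^{-\beta-1+a_nk}\exp\left(x_1t^{a_1}+\dots+x_{n-1}t^{a_{n-1}}\right)dt+\int_{\gamma}t^{-\beta-1}R_N(x_n,t)\exp\left(\sum_{j=1}^{n-1}x_jt^{a_j}\right)dt.
\]
This identifies the candidate coefficients as the $c_k$ of \eqref{coefficient-c-k}; the first task is to check that each such integral converges. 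On the path $\gamma$ the argument of $x_{n-1}t^{a_{n-1}}$ tends into $]\pi/2,3\pi/2[$, so $\mathrm{Re}(x_{n-1}t^{a_{n-1}})\to-\infty$ like a positive power of $|t|$, which dominates the polynomial factor $|t|^{a_nk-\mathrm{Re}\,\beta-1}$ and the subexponential contributions of the lower terms $x_jt^{a_j}$, $j\le n-2$; near the finite endpoints (or near $t=0$ if $\gamma$ passes through a neighbourhood of the origin) one checks integrability of $t^{-\beta-1+a_nk}$, which holds after deforming $\gamma$ slightly or is automatic since $\gamma$ is a rapid decay cycle avoiding $0$. The same estimate, applied with the extra factor $e^{\theta x_nt^{a_n}}$ whose real part is also $\to-\infty$ for $x_n$ in a suitable sector $S_n$ and $\theta\in[0,1]$, shows the remainder integral is bounded by $C_N|x_n|^N$ uniformly for $(x_1,\dots,x_{n-1})$ in a fixed compact set times $S_{n-1}$ and $x_n\in S_n$ small; hence $\sum c_kx_n^k$ is the asymptotic expansion of $I(\beta;x)$ as $x_n\to0$.

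The delicate point — and the main obstacle — is locating the common sector where everything converges. The hypothesis is that the limiting arguments of $x_nt^{a_n}$ and $x_{n-1}t^{a_{n-1}}$ lie in $]\pi/2,3\pi/2[$ for a \emph{fixed} choice of $x_{n-1},x_n$; one must observe that this is an open condition, so it persists for $x_{n-1}$ in an open sector $S_{n-1}\subset\CC^*$ and $x_n$ in an open sector $S_n\subset\CC^*$, while $(x_1,\dots,x_{n-2})$ range freely over $\CC^{n-2}$ because the terms $x_jt^{a_j}$ with $j\le n-2$ grow strictly more slowly than $x_{n-1}t^{a_{n-1}}$ along $\gamma$ and therefore cannot destroy the rapid decay. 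I would also note that in the remainder term the auxiliary real part $\mathrm{Re}(\theta x_nt^{a_n})$ is handled either by shrinking $S_n$ so that its argument too lies in $]\pi/2,3\pi/2[$ along $\gamma$ (giving an extra decay) or, more crudely, by the inequality $\mathrm{Re}(\theta x_nt^{a_n})\le |x_n||t|^{a_n}$ together with the fact that $a_n>a_{n-1}$ makes $\mathrm{Re}(x_{n-1}t^{a_{n-1}})$ insufficient to absorb it — so the clean statement does require $\arg(x_nt^{a_n})$ to stay in the stated interval, exactly as in the hypothesis. Assembling these bounds gives the uniform remainder estimate on $\CC^{n-2}\times S_{n-1}\times S_n$ and completes the proof.
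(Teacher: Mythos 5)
Your proposal is correct and follows essentially the same route as the paper: expand $\exp(x_nt^{a_n})$ to order $N$, identify the coefficients $c_k$ as the term-by-term integrals, and bound the remainder by $C_N|x_n|^{N}$ using that $\Re(x_nt^{a_n})\le 0$ along $\gamma$ (outside a compact piece) for $x_n$ in a sector. The only cosmetic difference is that you control the remainder via the integral form of the Taylor remainder and $|e^{\theta x_nt^{a_n}}|\le 1$, whereas the paper invokes the elementary estimate $|r_N(z)|\le C'_N|z|^{N+1}$ for $\Re z<0$ on the tail of the exponential series --- the same idea in different clothing.
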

\begin{proof}
By developing the exponential $e^{x_nt^{a_n}}$ we may write
\[
I(\beta; x)=\int_{\gamma}t^{-\beta-1}\exp\left(x_1t^{a_1}+\dots +x_{n-1}t^{a_{n-1}}\right)\left(\sum_{k=0}^{\infty} \frac{(x_nt^{a_n})^k}{k!} \right) dt
\]
and we get
\begin{equation}\label{K_N} I(\beta; x_1,\dots, x_n)=\sum_{k=0}^N
\frac{x_n^k}{k!} \int_{\gamma}  t^{a_n k-\beta -1} \exp\left(x_1t^{a_1}+\dots +x_{n-1}t^{a_{n-1}}\right)
dt + R_N(\beta; x_1,\ldots,x_n)
\end{equation}
where $$R_N(\beta; x_1,\ldots,x_n) = R_N(\beta; x)= \int_{\gamma} t^{-\beta -1}\exp\left(x_1t^{a_1}+\dots +x_{n-1}t^{a_{n-1}}\right)
\left(\sum_{k=N+1}^\infty \frac{(x_nt^{a_n})^k}{k!} \right)dt.$$

These integrals are all convergent on $\gamma$ due to its rapid decay properties and we shall use the following elementary lemma involving an auxiliary complex variable  $z\in \CC$~:
\begin{lemma} Let $r_N(z)=\sum_{k=N+1}^\infty \frac{z^k}{k!}$ be the remainder of order $N$ of the exponential power series.
There exists a positive
real number $C'_N$, depending only on $N$, such that  for
all $z$ with $\Re z < 0$ one has
\[
|r_N(z)|\leq C'_N|z|^{N+1}.
\]
\end{lemma}
Since there is a compact set $K$ such that for $t\in \gamma \setminus K$ we have $\Re (x_nt^{a_n})<0$   there is a possibly larger constant $C_N$ depending also on $\gamma$ such that
\[
\forall t\in \gamma , \; \left|r_N(x_nt^{a_n})\right|\leq C_N|x_nt^{a_n}|^{N+1}
\]
So we have
\begin{align*}
\left|R_N(\beta; x) \right|=&\left| x_n^{N+1} \int_{\gamma} t^{a_n(N+1)-\beta-1}\exp\left(x_1t^{a_1}+\dots +x_{n-1}t^{a_{n-1}}\right) {r_N(x_nt^{a_n})\over (x_nt^{a_n})^{N+1}} dt\right|\\ = &|Q_N(\beta; x_1,\dots, x_{n})|\cdot |x_n|^{N+1}.
\end{align*}
This proves the existence of an asymptotic expansion which is locally uniform with respect to $(x_1,\dots, x_{n-1})$. Indeed as indicated in the statement, the domain of convergence of the last integral $Q_N$ contains the product of $\CC^{n-2}$ by a product of sectors in the variables $x_{n-1},x_n$ . It is convergent since the integrand is bounded by the integrable function~:
$$C_N\left|t^{a_n(N+1)-\beta-1}\exp\left(x_1t^{a_1}+\dots +x_{n-1}t^{a_{n-1}}\right) \right|.$$
\end{proof}

\section{Gevrey expansions of hypergeometric integrals for $A=(a_1,\dots , a_n)$}
First of all we prove the following
\begin{proposition}\label{a-e-is-Gevrey}
The asymptotic expansion of the integral depending on $(x_1,\dots, x_n)$:
\[
I(\beta; x)=\int_{\gamma}t^{-\beta-1} \exp\left(\sum_{j=1}^n x_j t^{a_j}\right)dt
\]
given in Proposition \ref{expansion2} is a Gevrey series of order less than or equal to  $s=\frac{a_n}{a_{n-1}}$ with respect to $x_n=0$.
\end{proposition}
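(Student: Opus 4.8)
The plan is to estimate directly the coefficients $c_k(x_1,\dots,x_{n-1})$ given in (\ref{coefficient-c-k}) and show that $\sum_k c_k(x) x_n^k / (k!)^{s-1}$ converges, with $s = a_n/a_{n-1}$; equivalently, that $c_k(x)/(k!)^{a_n/a_{n-1}-1}$ grows at most geometrically in $k$. Writing $c_k(x) = \frac{1}{k!}\int_\gamma t^{a_n k - \beta -1}\exp(x_1 t^{a_1}+\cdots+x_{n-1}t^{a_{n-1}})\,dt$, the first step is to split $\gamma$ into a compact part $K$ — where $|t^{a_n k}|$ is bounded by a geometric-in-$k$ quantity and the exponential factor is bounded — and the two unbounded ends. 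On each unbounded end, the rapid-decay hypothesis on $\gamma$ says $\Re(x_{n-1}t^{a_{n-1}}) \to -\infty$, and I would parametrize (up to a harmless change of variable) by $|t| = \rho \to +\infty$ so that on the tail $\Re(x_1 t^{a_1}+\cdots + x_{n-1}t^{a_{n-1}}) \le -c\,\rho^{a_{n-1}}$ for some $c>0$ and all $\rho$ large (the $x_{n-1}$-term dominates the lower-order terms $x_i t^{a_i}$, $i<n-1$, since $a_i<a_{n-1}$). Then the tail contribution to $k!\,|c_k(x)|$ is bounded by $\int_R^\infty \rho^{a_n k - \Re\beta - 1}\,e^{-c\rho^{a_{n-1}}}\,d\rho$ (times a constant from the Jacobian of the path), up to a polynomial-in-$\rho$ correction absorbed into constants.

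The second step is the substitution $\sigma = c\,\rho^{a_{n-1}}$, which turns this tail integral into $\frac{1}{a_{n-1}} c^{-(a_n k - \Re\beta)/a_{n-1}}\int_{cR^{a_{n-1}}}^\infty \sigma^{(a_n k - \Re\beta)/a_{n-1} - 1} e^{-\sigma}\,d\sigma \le \mathrm{const}\cdot c^{-a_n k/a_{n-1}}\,\Gamma\!\left(\tfrac{a_n k}{a_{n-1}} + O(1)\right)$. Using the standard estimate $\Gamma(\alpha k + \theta) \le C^k (k!)^{\alpha}$ for fixed $\alpha>0$ (a consequence of Stirling), this gives $k!\,|c_k(x)| \le C_1^{\,k}\,(k!)^{a_n/a_{n-1}}$ on the sector where $c$ can be chosen uniformly, hence $|c_k(x)| \le C_1^{\,k}\,(k!)^{a_n/a_{n-1}-1}$. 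Consequently $\sum_k \frac{c_k(x)}{(k!)^{s-1}} x_n^k$ with $s = a_n/a_{n-1}$ has $k$-th term bounded by $(C_1|x_n|)^k$, so it converges for $|x_n|$ small and is holomorphic there; this is exactly the Gevrey-$s$ condition from Section \ref{section-Gevrey-sol-hyp-sys}. Combined with Proposition \ref{expansion2}, which already identifies this series as the asymptotic expansion of $I(\beta;x)$, the proof is complete. The uniformity in $(x_1,\dots,x_{n-1})$ over the relevant sector (and local in $\CC^{n-2}$) is inherited from the uniform choice of $c$ and $R$, just as in the proof of Proposition \ref{expansion2}.

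The main obstacle is the bookkeeping in the first step: making precise that on the unbounded ends of $\gamma$ one genuinely has $\Re(x_1 t^{a_1}+\cdots+x_{n-1}t^{a_{n-1}}) \le -c\rho^{a_{n-1}}$ with a constant $c$ uniform over a full sector in $x_{n-1}$ (and locally uniform in the other $x_i$), and controlling the Jacobian $|dt|$ along $\gamma$ by a power of $\rho$. This is where the precise definition of ``rapid decay cycle'' and the sector conditions from Proposition \ref{expansion2} get used; once that domination is in hand, everything reduces to the elementary Gamma-function estimate above. A secondary (routine) point is handling the exceptional value $a_{n-1}k \in$ small integers or $\beta$ with large real part, which only shifts the argument of $\Gamma$ by a bounded amount and is absorbed into the geometric constant.
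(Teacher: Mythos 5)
Your proposal is correct, but it follows a genuinely different route from the paper. You estimate each coefficient integral directly, Watson-lemma style: split $\gamma$ into a compact piece and two tails, use the rapid-decay bound $\Re\bigl(x_1t^{a_1}+\cdots+x_{n-1}t^{a_{n-1}}\bigr)\leq -c\,\rho^{a_{n-1}}$ on the tails (valid since the lower-order terms are $o(\rho^{a_{n-1}})$ and the argument of $x_{n-1}t^{a_{n-1}}$ stays in a compact subinterval of $]\frac{\pi}{2},\frac{3\pi}{2}[$), substitute $\sigma=c\rho^{a_{n-1}}$ to produce $\Gamma\bigl(\frac{a_nk}{a_{n-1}}+O(1)\bigr)$, and invoke Stirling via $\Gamma(\alpha k+\theta)\leq C^k(k!)^{\alpha}$. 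The paper instead writes $(a_{n-1},a_n)=(da,db)$ with $\gcd(a,b)=1$, sets $k=aq+j$, and observes by differentiation under the integral sign the functional relation $(qa+j)!\,c_{qa+j}=j!\,\partial_{x_{n-1}}^{qb}c_j$; the bound then follows from Cauchy's inequalities applied to the finitely many holomorphic functions $c_0,\dots,c_{a-1}$ near a point with $\Re x_{n-1}<0$, giving $(qa+j)!\,|c_{qa+j}|\leq j!\,(qb)!\,K^{qb}$. The paper's argument is shorter and pushes all the analytic content into the single fact that $c_0,\dots,c_{a-1}$ are holomorphic (already secured by Proposition \ref{expansion2}), at the price of being less explicit about where the rapid-decay hypothesis enters; your argument is more self-contained, makes the dependence of the constants on the sector in $x_{n-1}$ transparent, and handles the compact-cycle case (empty tails) as a trivial degeneration. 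The bookkeeping you flag (uniform choice of $c$ and $R$ over the sector, polynomial control of the Jacobian along the standard paths) is routine for the half-line-plus-arc cycles actually used, so there is no gap.
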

\begin{proof}
We set $(a_{n-1},a_n)=(da,db)$ with $\gcd(a,b)=1$ and for each $k\in\NN$, $k=aq+j$ with $0\leq j < a$. Looking at the exponents of $t$ in the integrands, the coefficients $c_k$ in (\ref{coefficient-c-k})
satisfy by derivation under the sign $\int_{\gamma}$ the relation
\[
(qa+j)!c_{qa+j}(x_1,\dots, x_{n-1})=j!{\del^{qb}c_j\over\del{x_{n-1}}^{qb}}(x_1,\dots, x_{n-1})
\]
Since each of the functions $c_0,\dots,c_{a-1}$ is holomorphic, we have in a small enough neighbourhood of a point $x_1,\dots ,x_{n-1}$ with $\Re x_{n-1}<0$ a uniform upper bound involving a constant $K$ that we can choose common to all the indices $j=0,\ldots,a-1$ :
\[
(qa+j)!\left|c_{qa+j}(x_1,\dots, x_{n-1})\right|\leq j!(qb)!K^{qb}
\]
%\left|c_j(x_1,\dots, x_{n-1})\right|
Then a local upper bound of %$c_k(x_1,\dots, x_{n-1})/(k!)^{\frac ba}$
$c_k(x_1,\dots, x_{n-1})$ is the quotient $$(a-1)!\frac{(\lfloor\frac{kb}a\rfloor)!K^{\frac{kb}{a}}}{k!}.$$ Hence, the asymptotic expansion that we consider is a  Gevrey series of order at most $\frac ba = \frac{a_n}{a_{n-1}}$. \end{proof}

\begin{remark} We will see, as a consequence of Theorem \ref{a.e.are-basis-a1-an}, that in fact these Gevrey series have index equal to $\frac{a_n}{a_{n-1}}$ when $\beta$ is generic enough. See Section \ref{gevrey-mod-convergent}.
\end{remark}

\subsection{Case $A=(a,b)$}\label{casab} The aim of this Subsection is to compare, in the case $A=(a,b)$,  the Gevrey asymptotic expansions of the hypergeometric integrals to the Gevrey solutions described in Proposition \ref{basis-gevrey-solutions-dim-2}.  This comparison is proved in Theorem \ref{a.e.are-basis-in-dim2}. We consider here $a,b\in \ZZ$,  $1\leq a<b$ and $a,b$ are relative primes.

%From \cite{FC2} (see Proposition \ref{}) a basis of the Gevrey solutions --with respect to
%$x_2=0$-- of $M_A(\beta)$ is, for $\beta$ generic, equal to
%$\{\varphi^{(0)},\dots, \varphi^{(a-1)}\}$ where
%\begin{equation}\label{1}
%\varphi^{(j)} = x_1^{\frac{\beta-jb}{a}} \sum_{m\geq 0}
%\frac{[\frac{\beta-jb}{a}]_{bm}}{[am+j]_{am} } x_1^{-bm}x_2^{am+j}
%\end{equation} \paco{this formula has been already written at least two times ...}

We consider the integrals
\[
I_\gamma (A,\beta;x) = I_\gamma(\beta; x) = \int_{\gamma} t^{-\beta -1} \exp\left(x_1 t^a + x_2 t^b\right) dt.
\] with respect to various specific cycles of rapid decay $\gamma=C_p$ as in Figure 1.

%\centerline{$\stackrel{\includegraphics[scale=0.50]{chemin.pdf}}{fig.1}$}

\begin{center}
\begin{picture}(0,0)%
\includegraphics[scale=0.60]{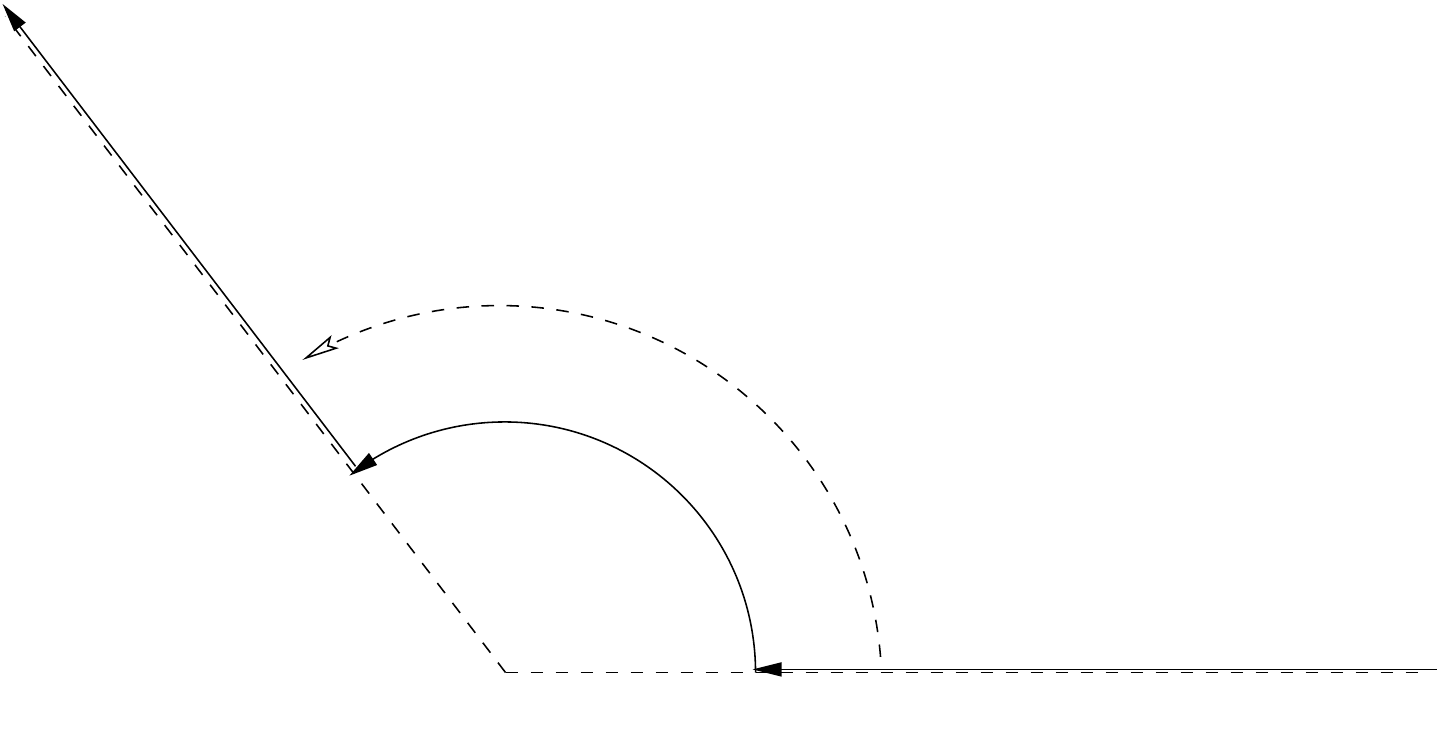}%
\end{picture}%
\setlength{\unitlength}{2368sp}%
\begingroup\makeatletter\ifx\SetFigFont\undefined%
\gdef\SetFigFont#1#2#3#4#5{%
  \reset@font\fontsize{#1}{#2pt}%
  \fontfamily{#3}\fontseries{#4}\fontshape{#5}%
  \selectfont}%
\fi\endgroup%
\begin{picture}(6909,3511)(3574,-4250)
\put(7126,-4186){\makebox(0,0)[lb]{\smash{{\SetFigFont{8}{14.4}{\rmdefault}{\mddefault}{\updefault}{\color[rgb]{0,0,0}$\epsilon$}%
}}}}
\put(5026,-3286){\makebox(0,0)[lb]{\smash{{\SetFigFont{8}{14.4}{\familydefault}{\mddefault}{\updefault}{\color[rgb]{0,0,0}$\epsilon\omega_p$}%
}}}}
\put(6226,-2161)
{\makebox(0,0)[lb]{\smash{{\SetFigFont{8}{14.4}{\familydefault}{\mddefault}{\updefault}{\color[rgb]{0,0,0}${2 \pi p \over b}$}%
}}}}
\put(6200,-4500)
{\makebox(0,0)[lb]{\smash{{\SetFigFont{8}{14.4}{\familydefault}{\mddefault}{\updefault}{\color[rgb]{0,0,0}{${\rm Fig.}\, 1.$ The cycle $C_p$.}}%
}}}}
\end{picture}%

\phantom{xx}
\end{center}

This path consists of the real half line $[\epsilon ,+\infty[$ negatively oriented, and the half line $\omega_p\cdot[\epsilon ,+\infty[$ where $\omega_p=e^{2i\pi p\over b}$ is a $b$--th root of unity joined by the arc of a circle $ \{ \epsilon e^{i\theta}\mid 0\leq \theta \leq {2p\pi\over b}\}$  with $\epsilon >0$.

The cycles $C_p\otimes \sigma$, for $1\leq p \leq b$ are a basis of the rapid decay homology with closed support as defined in \cite{Hien07} where $\sigma$ is a section of the local system $\CC\cdot t^{-\beta-1}\exp(x_1t^a+x_2t^b)$ in $\CC^{\ast}$. We choose the determination of $\log t$ as being real on $[\epsilon,+\infty[$.
When $\beta$ is generic, in practise here when $\beta \notin \ZZ$, they are all non compact and when $\beta$ is an integer the last cycle $C_b$ is compact being equivalent to the circle of radius $\epsilon$ because $t^{-\beta-1}$ is uniform on $\CC^*$.
The integral along $C_p$ does not depend on	 the choice of $\epsilon$ and we have an asymptotic expansion which is just a particular case of Proposition \ref{expansion2}:

\begin{proposition}\label{expansion}
The integral depending on $(x_1,x_2)$:
\[
I_{C_p}(\beta; x_1,x_2)=\int_{C_p}t^{-\beta-1}e^{x_1t^a +x_2t^b }dt
\]
is defined when $\Re x_2<0$ and admit an asymptotic expansion $\sum_{k=0}^\infty c_k(x_1) x_2^k$ whose coefficients are
\[
c_k(x_1)=\frac{1}{k!}\int_{C_p}t^{-\beta-1+bk}e^{x_1t^a}dt.
\]
This expansion is valid in the open sector defined by $\Re x_1<0,\Re(\omega_p ^ax_1)<0$.
\end{proposition}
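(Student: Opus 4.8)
The plan is to deduce this statement directly from Proposition \ref{expansion2} applied with $n=2$, $(a_{n-1},a_n)=(a,b)$, once we check that the broken path $C_p$ satisfies the rapid decay hypotheses required there and identify the sectors $S_{n-1},S_n$ explicitly. Recall that $C_p$ is made of the two half-lines $[\epsilon,+\infty[$ (negatively oriented) and $\omega_p\cdot[\epsilon,+\infty[$, together with the compact circular arc $\{\epsilon e^{i\theta}\mid 0\le\theta\le 2p\pi/b\}$; the arc contributes nothing to the discussion of convergence, so everything reduces to the behaviour of the integrand along the two rays as $t$ (resp. $t/\omega_p$) tends to $+\infty$.

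First I would compute the relevant arguments along each ray. Along $[\epsilon,+\infty[$ we have $t>0$, hence $\arg(x_1t^a)=\arg x_1$ and $\arg(x_2t^b)=\arg x_2$. Along the second ray write $t=\omega_p\tau$ with $\tau\ge\epsilon$; since $\omega_p^b=1$ one gets $x_2t^b=x_2\tau^b$, so $\arg(x_2t^b)=\arg x_2$ again, whereas $x_1t^a=\omega_p^ax_1\tau^a$, so $\arg(x_1t^a)=\arg(\omega_p^ax_1)$. The hypothesis of Proposition \ref{expansion2} --- that the arguments of $x_nt^{a_n}$ and $x_{n-1}t^{a_{n-1}}$ have limits in $]\pi/2,3\pi/2[$ --- is precisely the requirement that the corresponding complex numbers have negative real part. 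Hence for the full integrand $t^{-\beta-1}e^{x_1t^a+x_2t^b}$ to be of rapid decay on $C_p$ we need $\Re x_2<0$ (the same condition on both rays, because $\omega_p^b=1$), while for its restriction $t^{-\beta-1}e^{x_1t^a}$ --- the one governing the coefficients $c_k$ --- we need $\Re x_1<0$ on the first ray and $\Re(\omega_p^ax_1)<0$ on the second. This singles out the sectors $S_2=\{\Re x_2<0\}$ and $S_1=\{\Re x_1<0\}\cap\{\Re(\omega_p^ax_1)<0\}$, the open sector in the statement.

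It remains to note two elementary points. The integral $I_{C_p}(\beta;x_1,x_2)$ itself converges as soon as $\Re x_2<0$, for any $x_1$: along $[\epsilon,+\infty[$ one has $|e^{x_1t^a+x_2t^b}|=e^{(\Re x_1)t^a+(\Re x_2)t^b}$, and along $\omega_p\cdot[\epsilon,+\infty[$, with $t=\omega_p\tau$, $|e^{x_1t^a+x_2t^b}|=e^{\Re(\omega_p^ax_1)\tau^a+(\Re x_2)\tau^b}$; since $b>a$, the term coming from $x_2t^b$ dominates, forcing exponential decay that absorbs the polynomial factor $|t|^{\Re(-\beta-1)}$ and any growth from $e^{x_1t^a}$. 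Independence of the value of the integral on $\epsilon$ follows from Cauchy's theorem, the integrand being holomorphic on $\CC^*$ (for the fixed determination of $\log t$ real on $[\epsilon,+\infty[$) and of rapid decay at infinity, so the closed contour bounding the region between two choices of $\epsilon$ gives zero. With these observations Proposition \ref{expansion2} applies verbatim and yields the asymptotic expansion $\sum_k c_k(x_1)x_2^k$ with $c_k(x_1)=\frac1{k!}\int_{C_p}t^{-\beta-1+bk}e^{x_1t^a}dt$, valid on $S_1\times S_2$. There is essentially no obstacle here; the only point requiring care is the bookkeeping on the second ray, where $\omega_p^b=1$ makes the condition on $x_2$ invariant but $\omega_p^a\neq 1$ (in general) genuinely twists the condition on $x_1$, which is what produces the intersection of two half-planes in the definition of $S_1$.
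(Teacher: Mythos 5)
Your proof is correct and follows the same route as the paper, which simply asserts that Proposition \ref{expansion} is a particular case of Proposition \ref{expansion2} applied to the path $C_p$. Your explicit verification of the rapid-decay hypotheses along the two rays (using $\omega_p^b=1$ to see that the condition on $x_2$ is the same on both rays while the condition on $x_1$ becomes the intersection $\Re x_1<0$, $\Re(\omega_p^a x_1)<0$) is exactly the bookkeeping the paper leaves implicit.
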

\qed

\begin{remark}\label{sectors}
Notice that the open sector of the variable $x_1$ described in Proposition \ref{expansion} is empty when $\omega_p^a=-1$ which happens if $(2k+1)b=2pa$. In Proposition \ref{DAsecteurs} we will show that if we restrict the domain of the variable $x_2$ to some sector $\pi-\epsilon<\arg x_2<\pi+\epsilon$ for a small $\epsilon>0$ we may enlarge the domain of validity in the variable $x_1$.
\end{remark}

We give now a more precise description of the asymptotic expansion of the integral $I_{C_p}(\beta; x_1,x_2)$.
The coefficient $c_k(x_1)$ can be decomposed as a sum $-I_{k,1} + J_k + I_{k,2}$ where
$$ I_{k,1}(\epsilon,x_1)=\frac{1}{k!}\int_\epsilon^{+\infty} s^{bk-\beta-1}e^{x_1s^a}ds$$
$$ J_k(\epsilon,x_1)=\frac{1}{k!}\int_0^{2p\pi\over b} \epsilon^{bk-\beta -1 } e^{-i\theta (\beta +1-bk)} e^{x_1 \epsilon^a e^{ia\theta } } i\epsilon \, e^{i \theta} d\theta $$
$$ I_{k,2}(\epsilon,x_1)=\frac{1}{k!}\int_{\omega_p\cdot[\epsilon,+\infty[} t^{-\beta-1+bk}e^{x_1t^a}dt=\frac{1}{k!}\int_{\epsilon}^{+\infty}e^{{2ip\pi\over b}(bk-\beta)} s^{bk-\beta -1} e^{x_1\omega_p^as^a} ds.$$

For $k$ large enough $\Re (bk-\beta)>0$ and the limit of $J_k(\epsilon,x_1)$ is 0 when $\epsilon$ tends to 0. Under the same condition the limit of the sum $I_{k,1}$ and of $I_{k,2} $ exist  and then
\begin{equation}\label{coef}
 c_k(x_1)=\frac{1}{k!}\int_0^{+\infty}e^{{2ip\pi\over b}(bk-\beta)} s^{bk-\beta -1} e^{x_1\omega_p^as^a} ds-\frac{1}{k!}\int_0^{+\infty} s^{bk-\beta-1}e^{x_1s^a}ds.
\end{equation}
Let us denote $c_k(x_1)=I_{k,2}(x_1)-I_{k,1}(x_1)$ this decomposition of the coefficient $c_k(x_1) $ as a difference. We transform $I_{k,1}(x_1)$ for $x_1$ real negative by  the change of variable
$u=|x_1|s^a$, $ds={sdu\over au}$.

\begin{equation}\label{1000}
I_{k,1}(x_1)=\frac{1}{k!} \int_0^{+\infty} s^{bk-\beta-1} e^{x_1 s^a} ds ={1\over k!}\int_0^{+\infty}\left({u\over|x_1|}\right)^{bk-\beta\over
a}{e^{-u}du\over au}.
\end{equation}
The final result is
\[
I_{k,1}(x_1)
=\frac{1}{a\, k!}e^{-i\pi({\beta-kb\over
a})}x_1^\frac{\beta-kb}{a} \int_0^{+\infty}u^{{bk-\beta\over a}-1}e^{-u}du
\]
when we choose $\pi$ as a determination of the argument of $x_1$. The equality is valid on the half plane $\Re x_1<0$ because both sides are holomorphic and coincide on the real negative axis by the equation (\ref{1000}). The last integral equals $\Gamma({bk-\beta\over a})$.

Similarly the first part of the equation (\ref{coef}) for $c_k(x_1)$ can be evaluated for the values of $x_1\in \CC$ such that $x_1\omega_p^a$ is real negative and setting $u=|x_1|s^a$ for such a fixed $x_1$
\[
I_{k,2}(x_1)
=\frac{1}{k!}\int_0^{+\infty}e^{{2ip\pi\over b}(bk-\beta)} s^{bk-\beta -1} e^{x_1\omega_p^as^a}ds={1\over k!}\int_0^{+\infty}e^{{2ip\pi\over b}(bk-\beta)}\left({u\over|x_1|}\right)^{bk-\beta\over
a}{e^{-u}du\over au}
\]
The appropriate argument for $\omega_p^a$ is ${2ap\pi\over b}$, therefore in order to get $\arg x_1\omega_p^a=\pi$ one must set $\arg x_1=\pi-{2ap\pi\over b}$. By a calculation identical to the one used for $I_{k,1}(x_1)$ we get:
\[
|x_1|^{\beta-kb\over
a}=x_1^{\beta-kb\over
a}e^{-i\pi({\beta-kb\over
a})}e^{{2iap\pi\over b}({\beta-kb\over
a})}=x_1^{\beta-kb\over a}e^{-i\pi({\beta-kb\over
a})}e^{{2ip\pi\over b}\beta}
\]
\[
I_{k,2}(x_1)
={1\over a\, k!}x_1^{\beta-kb\over a}e^{-i\pi({\beta-kb\over
a})}\int_0^{+\infty}u^{{bk-\beta\over
a}-1}{e^{-u}du}.
\]

We get an expression of $I_{k,2}(x_1)$ formally identical to the one for $I_{k,1}(x_1)$. However the respective determinations of the argument of $x_1$ that we use in these two integrals are in general different in the common domain of definition. This domain is the intersection $S_p$ of the two half planes $\Re x_1<0$ and $\Re x_1\omega_p ^a<0$. Therefore in this common sector  there is a constant $c(k,p)$ such that $I_{k,1}(x_1)=c(k,p)I_{k,2}(x_1)$.

% More precisely define $f(x_1)$, and  $f_p(x_1)$ to be the determinations of  $x_1^{\beta-kb\over a}$ which appear in $B_k(x_1)$ and $ A_{,k}(x_1)$  respectively.

% For $f$ and in a neighbourhood of $-1=e^{i\pi}$ we have
% \[
% f(re^{i(\pi+\theta)})=r^{\beta-kb\over a}e^{(i\pi+\theta)({\beta-kb\over a})})
% \]
% and for $f_p$ in a neighbourhood of $-\omega ^{-a}=e^{i(\pi-\frac{2\pi ap} {b})}$
% \[
% f_p(re^{i(\pi-\frac{2\pi ap}{b}+\theta)})=r^{\beta-kb\over a}e^{(i\pi+\theta)% ({\beta-kb\over a})}\cdot e^{i\frac{-2\pi p(\beta-kb)}{b}}
% =r^{\beta-kb\over a}e^{(i\pi+\theta)({\beta-kb\over a})}\cdot  e^{-i\frac{2\pi % p\beta}{b}}
% \]
Assume that $\omega_p^a\neq -1$. We have $\arg(\omega_p^{-a})=-\frac{2\pi ap}{b}$ and we define the integer $\ell\in \{0,\dots , a\}$ by the property that $\alpha=2\pi(\ell-\frac{ ap}{b})$ is the determination  of $\arg(\omega_p^{-a})$ in $]-\pi,\pi[$
\begin{equation}\label{tours}
-\pi<2\pi\ell-\frac{2\pi ap}{b}<\pi.
\end{equation}
In other words $\ell=\lceil\frac{ap}{b}-\frac12\rceil$, and since $\omega_p^a\neq -1$,   $\ell$ is the unique integer such that $|\frac{ ap}{b}-\ell|<\frac12$. The argument used for $x_1$ in  the sector of validity of $I_{k,2}(x_1)$  is centered on the value $\pi-{2ap\pi\over b}\in ]-2\pi\ell,-2\pi(\ell-1)[.$
%\pacoverde{To check this last interval, c'est check\'e!}. *
Therefore it is equal to $\arg x_1-2\pi\ell$ if we denote by $\arg x_1$ the standard determination in $]0,2\pi[$ used for $I_{k,1}(x_1)$. By treating the effect of this difference on the monomial $x_1^{\beta-kb\over a}$ we obtain in the sector $S_p$~:
\begin{align}\label{1001}
I_{k,2}(x_1)=I_{k,1}(x_1)\times e^{-2i\pi\ell({\beta-kb\over a})}
\end{align}
If we set $k=am+j$ it results in
\[
I_{am+j,2}(x_1)=I_{am+j,1}(x_1)\cdot \left(e^{2i\pi{jb-\beta\over a}}\right)^{\ell}.
\]
The asymptotic expansion of $I_{C_p}(x_1,x_2)$ is valid in the same sector by the proof of Proposition \ref{expansion2} and we obtain still limiting ourselves to $k$
or $m$ large enough :
\begin{equation}\label{coefficient}
\begin{multlined}
 c_{am+j}(x_1)=
\left(\left(e^{2i\pi{jb-\beta\over a}}\right)^{\ell}-1\right)I_{am+j,1}(x_1)
\\
=\frac{\left(e^{2i\pi{jb-\beta\over a}}\right)^{\ell}-1}{a(am+j)!}e^{i\pi({jb-\beta\over
a})}e^{i\pi bm} x_1^{{\beta-jb\over
a}-bm}\Gamma\left(bm+{jb-\beta\over a}\right)
\end{multlined}
\end{equation}

 Assume that $\beta\notin \ZZ$. Recall from (\ref{def-Gamma-series-a-b}) and  Proposition \ref{basis-gevrey-solutions-dim-2}, the elements  $\psi^{(j)}:=\psi^{(j)}_{(a,b),\beta}$ of a basis of the Gevrey solution space of order less than or equal to $s\geq \frac{b}{a}$ at generic points in $Y=(x_2=0)$. We want to arrange the expression of $c_{am+j}(x_1)$ in (\ref{coefficient}) so as to recognize in it a multiple by a factor depending only on $j$ of the coefficient of $x_2^{am+j}$ in the expansion of $\psi^{(j)}$. See  (\ref{coefficient2}) below.

  Using the well known identity
\[
\Gamma(z)\Gamma(1-z)={\pi\over \sin \pi z} \text{ and } {\Gamma(z+1)\over \Gamma(z-m+1)}=[z]_m=z(z-1)\dots (z-m+1)
\]
\[
\Gamma\left(bm+{bj-\beta\over a}\right)={(-1)^{bm}\pi\over \sin(\pi\cdot {bj-\beta\over a} )\Gamma({\beta-bj\over a}-bm+1)}={(-1)^{bm}\pi\over \sin(\pi\cdot {bj-\beta\over a} )\Gamma({\beta-bj\over a}+1)}\cdot \left[{\beta-bj\over a}\right]_{bm},
\]
the expression of the function $c_{am+j}$ is finally~:
\begin{equation}\label{coefficient2}
c_{am+j}(x_1)=\left[\left(e^{2i\pi{jb-\beta\over a}}\right)^{\ell}-1\right]{\pi e^{-i\pi({\beta-jb\over a})}\over a\,j!\sin(\pi\cdot {bj-\beta\over a} )\Gamma({\beta-bj\over a}+1)}\cdot
{\left[{\beta-bj\over a}\right]_{bm}\over [am+j]_{am}}x_1^{\frac{\beta-bj}{a}-bm}
\end{equation}

Now we are ready to prove a more precise statement about asymptotic expansion of the integral along the cycle $C_p$.
\begin{proposition}\label{DAsecteurs}
\begin{enumerate}
\item The integral $I_{C_p}(x_1,x_2)$ has, provided that $e^{2i\pi pa\over b}\neq -1$, an asymptotic expansion which  is a linear combination of the Gevrey series $\psi^{(j)}$. When $\beta \notin \ZZ$ the coefficient of $\psi^{(j)}$ in this linear combination is equal to the product of $q_j^{\ell}-1:=\left(e^{2i\pi{jb-\beta\over a}}\right)^{\ell}-1$ by a non zero constant $\lambda_j$ which does not depend on $p$ while $|\ell-\frac{ap}{b}|<\frac12.$ When $\beta\in \ZZ$ the same is true if $\frac{\beta-bj}a \notin \ZZ$. Finally when $\beta\in \ZZ\setminus \NN(a, b)$ the coefficient of $\psi^{(j_0)}$ is zero for the unique $j_0 $ such that $\frac{\beta-bj_0}a \in \ZZ_{<0}$.
\item Assume $\omega_p^a\neq -1$ and set $\alpha=2\pi(\ell-{ap\over b})$, $|\alpha|<\pi$. Then the expansion in (1) is valid for $\Re x_2<0$ and $x_1$ in the sector of angular width $\pi-|\alpha|$ defined by the condition :
\[\arg x_1\in\begin{cases}
 \quad ]\frac\pi2+\alpha,\frac{3\pi}2[ \text{ if }\alpha \geq 0 \\
 \quad  ]\frac\pi2,\frac{3\pi}2+\alpha[ \text{ if }\alpha \leq 0.
 \end{cases}
 \]
\item If we restrict the domain for the variable $x_2$ to a sector $|\arg (x_2)-\pi|<\epsilon$ with $\epsilon$ sufficiently small we can extend the domain of validity with respect to the variable $x_1$ to a larger sector, in such a way that for each $\ell \in \{1,\dots, a\}$ there exists  $p\in \{1,\ldots,b\}$  for which $|\ell-\frac{ap}{b}|<\frac12$ and the open extended sector contains the real negative axis.
\end{enumerate}
\end{proposition}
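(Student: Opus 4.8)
The plan is to read off parts (1) and (2) from the explicit formula for the coefficients $c_{am+j}(x_1)$ established above, and then to obtain (3) by rotating the contour $C_p$ inside its rapid decay homology class.

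For part (1) I would argue as follows. The asymptotic expansion $\widehat I:=\sum_{k\ge0}c_k(x_1)x_2^k$ of $I_{C_p}(\beta;x_1,x_2)$ is, by Proposition \ref{a-e-is-Gevrey}, a Gevrey series of order $\le b/a$, and being the asymptotic expansion of a holomorphic solution of $\cM_{(a,b)}(\beta)$ it is a formal solution of that system (its coefficients satisfy the hypergeometric recursion); hence Proposition \ref{basis-gevrey-solutions-dim-2} gives $\mu_j\in\CC$ with $\widehat I=\sum_{j=0}^{a-1}\mu_j\psi^{(j)}$. Comparing coefficients of $x_2^{am+j}$ on both sides and computing the left-hand one for $m$ large by (\ref{coefficient2}), one recognizes --- as long as $\psi^{(j)}$ is not a polynomial --- the coefficient of $x_2^{am+j}$ in $\psi^{(j)}$ from (\ref{def-Gamma-series-a-b}) times $\mu_j=\lambda_j(q_j^{\ell}-1)$ with $\lambda_j=\pi e^{-i\pi(\beta-bj)/a}\big/\big(a\,j!\sin(\pi(bj-\beta)/a)\Gamma((\beta-bj)/a+1)\big)$; this $\lambda_j$ does not involve $p$, and $\ell$ is the integer attached to $p$ by $|\ell-ap/b|<\tfrac12$, which gives the claimed $p$-independence. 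If $\beta\notin\ZZ$ then $(bj-\beta)/a\notin\ZZ$ for all $j$, so $q_j\ne1$ while $\sin(\pi(bj-\beta)/a)\ne0$ and $\Gamma((\beta-bj)/a+1)$ is finite and non zero, whence $\lambda_j\ne0$. If $\beta\in\ZZ$ let $j_0\in\{0,\dots,a-1\}$ be the unique index with $bj_0\equiv\beta\pmod a$, i.e.\ $q_{j_0}=1$: for $j\ne j_0$ the same discussion applies, while for $j=j_0$ the factorization $c_{am+j_0}(x_1)=(q_{j_0}^{\ell}-1)I_{am+j_0,1}(x_1)$ of (\ref{coefficient}) gives $c_{am+j_0}\equiv0$ for $m$ large, so $\mu_{j_0}$ times the coefficient of $x_2^{am+j_0}$ in $\psi^{(j_0)}$ vanishes for large $m$. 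Finally, when $\beta\in\ZZ\setminus\NN(a,b)$ one has $(\beta-bj_0)/a\in\ZZ_{<0}$ (otherwise $\beta=ha+j_0b\in\NN a+\NN b$), so $\psi^{(j_0)}$ is a non-terminating series whose coefficients $[(\beta-bj_0)/a]_{bm}/[am+j_0]_{am}$ never vanish, forcing $\mu_{j_0}=0$.

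For part (2), the expansion of $I_{C_p}$ from Proposition \ref{expansion} (the $A=(a,b)$ case of Proposition \ref{expansion2}) is valid exactly where the integrand has rapid decay on the two half-lines of $C_p$ and on those of its restriction to $x_2=0$, that is, for $\Re x_2<0$ together with $\Re x_1<0$ and $\Re(\omega_p^ax_1)<0$ (the circular arc contributing only a bounded term). Since $\omega_p^a\ne-1$ we may write $\omega_p^a=e^{-i\alpha}$ with $\alpha=2\pi(\ell-ap/b)$, $|\alpha|<\pi$, so $\Re(\omega_p^ax_1)<0$ reads $\arg x_1\in\,]\tfrac\pi2+\alpha,\tfrac{3\pi}2+\alpha[$; intersecting this with $\arg x_1\in\,]\tfrac\pi2,\tfrac{3\pi}2[$ gives precisely the sector of angular width $\pi-|\alpha|$ in the statement, and the identities of part (1), once a branch of $x_1^{(\beta-bj)/a}$ is fixed on this connected set, hold throughout it.

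For part (3), fix $\ell\in\{1,\dots,a\}$ and pick $p\in\{1,\dots,b\}$ so that $ap/b$ is the element of $\{a/b,2a/b,\dots,a\}$ (equally spaced with step $a/b<1$) closest to $\ell$; then $|\ell-ap/b|\le a/(2b)<\tfrac12$, so $\ell$ is the integer of part (2) attached to $p$, $\omega_p^a\ne-1$, and $|\alpha|\le a\pi/b$. I would then deform $C_p$, within its rapid decay homology class, by rotating its two half-lines through small angles $\delta_1,\delta_2$: when $|\arg x_2-\pi|<\varepsilon$, the rapid decay of $e^{x_2t^b}$ is preserved along a half-line tilted by $\delta$ provided $|\delta|<\tfrac{\pi}{2b}-\tfrac{\varepsilon}{b}$, and the straight-line homotopy in $(\delta_1,\delta_2)$ stays inside the rapid decay locus, so the integral and its asymptotic expansion are unchanged. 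The surviving constraint $\Re(x_1t^a)<0$ along the tilted rays reads $\arg x_1+a\delta_1\in\,]\tfrac\pi2,\tfrac{3\pi}2[$ and $\arg x_1-\alpha+a\delta_2\in\,]\tfrac\pi2,\tfrac{3\pi}2[$; letting $\delta_1,\delta_2$ vary, the admissible set of $\arg x_1$ becomes the intersection of two arcs of width $\pi+\tfrac{a\pi}{b}-\tfrac{2a\varepsilon}{b}$ centred at $\pi$ and at $\pi+\alpha$, an open sector strictly larger than that of part (2); it contains $\arg x_1=\pi$ as soon as $|\alpha|<\tfrac\pi2+\tfrac{a\pi}{2b}-\tfrac{a\varepsilon}{b}$, which holds for $\varepsilon$ small because $|\alpha|\le a\pi/b<\tfrac\pi2+\tfrac{a\pi}{2b}$, using $a<b$. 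I expect part (3) to be the main obstacle: one must verify carefully that the rotated contours stay in the rapid decay homology class along the entire homotopy, and that for every $\ell$ the selected $p$ produces an enlarged $x_1$-sector sweeping past the negative real axis --- it is exactly the strict inequality $a<b$ that makes this possible --- while a subordinate, bookkeeping difficulty is keeping the determination of $x_1^{(\beta-bj)/a}$ consistent across the enlarged sector and matched to the one used in $\psi^{(j)}$.
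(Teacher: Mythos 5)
Your proposal is correct and follows essentially the same route as the paper: part (1) by identifying the coefficients of the expansion with multiples of the $\psi^{(j)}$ via (\ref{coefficient}) and (\ref{coefficient2}) for $m$ large, part (2) by intersecting the half-planes $\Re x_1<0$ and $\Re(\omega_p^a x_1)<0$, and part (3) by rotating $C_p$ within its rapid decay class and taking the union of the rotated sectors. The only (harmless) variant is that you tilt the two half-lines of $C_p$ independently by $\delta_1,\delta_2$, whereas the paper rotates the whole contour to $e^{i\theta}C_p$; since the two constraints on $\arg x_1$ decouple, both yield the same enlarged sector of width $\pi-|\alpha|+\frac{a}{b}(\pi-2\epsilon)$ containing the negative real axis.
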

\begin{proof}
By Proposition \ref{basis-gevrey-solutions-dim-2} and since the integral $I_{C_p}(x_1,x_2)$ is a solution of the hypergeometric system $\cM_A(\beta)$, the asymptotic expansion that we found in Proposition \ref{expansion} is a linear combination of the Gevrey series  $\psi^{(j)}$, described in equation (\ref{def-Gamma-series-a-b}). Let us call $\mu_j$ the coefficient of this linear combination. Since the set of exponents of the series $\psi^{(j)}$ are mutually disjoint the sum $\sum_{m\geq 0}c_{am+j}(x_1)x_2^{am+j}$ must be the multiple  $\mu_j\psi^{(j)}$ of the series $\psi^{(j)}$. Assuming first that $\beta \notin \ZZ$ the calculation for $m$ large enough in equation (\ref{coefficient}) is sufficient to determine $\mu_j$ and comparing formula (\ref{coefficient2}) with the expression of the series $\psi^{(j)}$ gives the result with the value $\lambda_j=\pi e^{-i\pi({\beta-jb\over a})} \left( a\, j!\sin(\pi\cdot {bj-\beta\over a} )\Gamma({\beta-bj\over a}+1)\right)^{-1}$.

When $\beta\in\ZZ$ there is a unique $j_0\in \{0,\dots,a-1\}$ such that $\frac{\beta-bj_0}a$ is an integer. For $j\neq j_0$, the same argument applies for the determination of $\mu _j$. %and again $\mu_j\neq 0$.
Let us write $\beta=bj_0+aq$, with $q\in \ZZ$. In that case $c_{am+j_0}(x_1)=0$ for $m$ big enough. When $q<0$ or equivalently $\beta\in \ZZ\setminus \NN(a,b)$ the series $\psi^{(j_0)}$ has an infinite number of terms and again the argument based on equation (\ref{coefficient}) gives the announced result with $\mu_{j_0}=0$ by equation (\ref{coefficient}).  When $\beta \in \NN(a,b)$ the series $\psi^{(j_0)}$ reduces to a polynomial and equation (\ref{coefficient}) gives no information about $\mu_{j_0}$ to be reported in the statement of Proposition \ref{DAsecteurs}.

In the exceptional case $\beta \in \NN(a,b)$ we find directly that $\psi^{(j_0)}$ comes from an integral solution. Indeed by inspection of the $\Gamma$-series in (\ref{def-Gamma-series-a-b}) we find that $\varphi ^{(j_0)}$ is then the polynomial
$$ \sum_{m=0}^{\lfloor\frac qb\rfloor} \frac{[q]_{bm}}{[am+j_0]_{am} } x_1^{q-bm}x_2^{am+j_0}$$ and this is exactly the integral
\[
\frac1{2\pi i\,q!\,j_0!}I_\gamma (A,\beta;x) =\frac1{2\pi i\,q!\, j_0!}\int_{\gamma} t^{-\beta -1} \exp\left(x_1 t^a + x_2 t^b\right) dt
\] along the compact cycle $\gamma=(|t|=\epsilon)$ for $\epsilon >0$ small enough.

Consider the two half planes $\Re x_1<0$ and $\Re(x_1\omega_p^a)<0$ where the functions $I_{k,1}$ and $I_{k,2}$ are defined. Since $\alpha$ is the principal argument of $\omega_p^{-a}$, they intersect along the common sector $]\frac{\pi}2,\frac{3\pi}2[\,\, \cap \, \,]\frac{\pi}2+\alpha,\frac{3\pi}2+\alpha[$. This gives the second statement.

Furthermore for each $\ell\in\{1,\dots , a\}$ we choose $p\in\{1,\dots , b\}$ such that $|\alpha|$ the smallest possible which yields:
\[
\vert \alpha \vert =|2\pi \ell-\frac{2\pi ap}{b}|\leq \frac{\pi a}b
\]
This remark is useful for the proof of the statement (3) and shows that the restriction $\omega_p^a\neq -1$ is harmless since we can avoid it and reach any given $\ell$.

 In view of the last statement let us consider the family of paths $e^{i\theta}C_p$, for $|\theta|<\frac{\pi}{2b}-\frac{\epsilon}b$. When $x_2$ remains in the open sector $\vert\arg (x_2) -\pi\vert<\epsilon$ centered on real negative axis they are all rapid decay cycles for the function $t^{-\beta-1}\exp(x_1t^a+x_2t^b)$ and are equivalent in the rapid decay homology.
Therefore we have~:
\[
I_{C_p}(x_1,x_2)=\int_{C_p}t^{-\beta-1}\exp(x_1t^a+x_2t^b)dt=
\int_{e^{i\theta}C_p}t^{-\beta-1}\exp(x_1t^a+x_2t^b)dt.
\]
By Proposition \ref{expansion2} the Gevrey asymptotic expansion of $I_{C_p}(x_1,x_2)$ that we have obtained in statement (1) of the Proposition can also be written $\sum_{k=0}^\infty c_{\theta, k}(x_1) x_n^k$ with ~:
\[
c_{\theta, k}(x_1)=\frac1{k!}\int_{e^{i\theta} C_p}t^{bk-\beta-1}\exp(x_1t^a)dt=\frac1{k!}\int_{C_p}t^{bk-\beta-1}\exp(x_1e^{ia\theta} t^a)dt
\]
The domain of validity with respect to the variable $x_1$ of this asymptotic expansion therefore contains a sector $S_{\theta}$ which is the image by a rotation with an angle $-a\theta$ of the initial sector obtained from part (1) of the Proposition. Since we can continuously deform the paths of integration from $C_p$ to $e^{i\theta}C_p$ the domain of validity of the asymptotic expansion of $I_{C_p}$ is the union of all the sectors $S_{\theta}$. The original sector $S_0$ is centered at $\frac{\alpha}2+\pi$ and its width is $\pi-|\alpha|$. When $|\arg(x_2)-\pi|<\epsilon$ we have enlarged this width by an angle $\frac ab(\pi-2\epsilon)$. The width of the enlarged sector is~:
\[
\pi-|\alpha|+\frac ab(\pi-2\epsilon)>\pi-2\epsilon.
\]
Since this sector is centered at $\frac{\alpha}2+\pi\in\,\, ]\frac{\pi}2,\frac{3\pi}2[$ this proves that it contains the real negative axis  for $\epsilon>0$ small enough.
\end{proof}
\begin{proposition}\label{a.e.-gevrey-dim2-beta-generic} Assume that $\beta\notin \ZZ$. Then for every Gevrey series $\varphi$ of order less than or equal to  $s\geq \frac{b}{a}$,  which is a solution of the hypergeometric system $\cM_A(\beta)$, there is an holomorphic solution defined in a product of sectors, which is a neighbourhood of the product of the real negative axes, and which admits $\varphi$ as an asymptotic expansion. This solution can be described as an integral of the function $t^{-\beta-1}\exp(x_1t^a+x_2t^b)$ along a rapid decay cycle.
\end{proposition}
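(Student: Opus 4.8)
The plan is to produce, on one fixed product of sectors containing $\RR_{<0}\times\RR_{<0}$, a basis of the space of Gevrey solutions made up of asymptotic expansions of integrals along rotated cycles $e^{i\theta}C_p$, and then to obtain $\varphi$ as the matching linear combination of these integrals. By Proposition~\ref{basis-gevrey-solutions-dim-2} the functions $\psi^{(0)},\dots,\psi^{(a-1)}$ form a basis of the space of Gevrey solutions of $\cM_A(\beta)$ of order $\le s$, so it is enough to exhibit $a$ integral solutions whose asymptotic expansions are linearly independent and which share a common domain. By Proposition~\ref{DAsecteurs}(1), whenever $\omega_p^a\neq-1$ the expansion of $I_{C_p}$ is $\sum_{j=0}^{a-1}\lambda_j(q_j^{\ell}-1)\psi^{(j)}$ with $\lambda_j\in\CC^{*}$, $q_j:=e^{2i\pi(jb-\beta)/a}$, and $\ell$ the unique integer with $|\ell-ap/b|<\tfrac12$. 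Since $\beta\notin\ZZ$ we have $q_j\neq1$ for every $j$, and since $\gcd(a,b)=1$ the numbers $q_0,\dots,q_{a-1}$ are pairwise distinct.

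I would first establish the linear-algebra fact that, for $\ell=1,\dots,a$, the $a$ vectors $\big(\lambda_j(q_j^{\ell}-1)\big)_{j=0}^{a-1}$ are linearly independent, so that the $a$ corresponding combinations of the $\psi^{(j)}$ form a basis of the solution space. Indeed, if $\sum_{\ell=1}^{a}c_\ell\lambda_j(q_j^{\ell}-1)=0$ for all $j$, then since $\lambda_j\neq0$ the polynomial $P(x)=\sum_{\ell=1}^{a}c_\ell(x^{\ell}-1)$ vanishes at the $a$ points $q_0,\dots,q_{a-1}$ as well as at $x=1$; these are $a+1$ distinct points and $\deg P\le a$, so $P\equiv0$, which forces $c_1=\dots=c_a=0$.

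Next I would fix the common domain using Proposition~\ref{DAsecteurs}(3): restricting $x_2$ to a sector $S_2=\{\,|\arg x_2-\pi|<\epsilon\,\}$ with $\epsilon$ small, for each $\ell\in\{1,\dots,a\}$ one chooses $p=p(\ell)$ with $|\ell-ap/b|<\tfrac12$ and a rotation angle $\theta_\ell$ so that the integral $I_{e^{i\theta_\ell}C_{p(\ell)}}$, which as a function coincides with $I_{C_{p(\ell)}}$ and hence still has asymptotic expansion $\sum_j\lambda_j(q_j^{\ell}-1)\psi^{(j)}$, is valid for $x_1$ in an open sector $S_1^{(\ell)}\ni\RR_{<0}$. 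Setting $S_1=\bigcap_{\ell=1}^{a}S_1^{(\ell)}$, still an open sector containing $\RR_{<0}$, the product $S_1\times S_2$ is a neighbourhood of $\RR_{<0}\times\RR_{<0}$ on which all $a$ integrals $I_{e^{i\theta_\ell}C_{p(\ell)}}$ are simultaneously defined, holomorphic, solutions of $\cM_A(\beta)$, and carry the stated expansions.

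Finally, given any Gevrey solution $\varphi$ of order $\le s$, the basis property above yields constants $\nu_1,\dots,\nu_a$ with $\varphi=\sum_{\ell=1}^{a}\nu_\ell\big(\sum_{j}\lambda_j(q_j^{\ell}-1)\psi^{(j)}\big)$; then $F:=\sum_{\ell=1}^{a}\nu_\ell\,I_{e^{i\theta_\ell}C_{p(\ell)}}$ is a holomorphic solution of $\cM_A(\beta)$ on $S_1\times S_2$, and since taking an asymptotic expansion is $\CC$-linear its expansion is exactly $\varphi$. Moreover $F$ is the integral of $t^{-\beta-1}\exp(x_1t^{a}+x_2t^{b})$ along the rapid decay cycle $\sum_{\ell}\nu_\ell\,[e^{i\theta_\ell}C_{p(\ell)}]$, which is the asserted integral representation. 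I expect the genuine difficulty to lie not in the elementary Vandermonde-type computation but in the geometric control of a \emph{single} product of sectors, i.e.\ in simultaneously pushing all $a$ sectors $S_1^{(\ell)}$ across the negative real axis while keeping $x_2$ in one fixed sector; this is precisely what Proposition~\ref{DAsecteurs}(3) provides.
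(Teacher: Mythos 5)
Your proposal is correct and takes essentially the same route as the paper: reduce to the basis $\psi^{(j)}$ of Proposition \ref{basis-gevrey-solutions-dim-2}, take the $a$ integrals supplied by Proposition \ref{DAsecteurs}, and show that the matrix $\left(q_j^{\ell}-1\right)_{j,\ell}$ is invertible, with the common product of sectors secured by Proposition \ref{DAsecteurs}(3). The only (cosmetic) difference is that the paper proves invertibility by reducing the determinant to a Vandermonde form, $\det M=\prod_{i}(q_i-1)\prod_{i<j}(q_j-q_i)\neq 0$, whereas you argue by polynomial interpolation at the $a+1$ distinct points $1,q_0,\dots,q_{a-1}$.
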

\begin{proof}
According to %the subsection \ref{casab}
Proposition \ref{basis-gevrey-solutions-dim-2} it is sufficient to prove that each of the series $\psi^{(j)}$ or what amount to the same each series $\lambda _j\psi^{(j)}$ is the asymptotic expansion of such an integral. In Proposition \ref{DAsecteurs}, we have described such integrals and asymptotic expansions as linear combinations of the  $\lambda _j\psi^{(j)}$. The number $a$ of these integrals is equal to the dimension of the space of Gevrey solutions. Therefore in order to prove the statement we just have to show that the square matrix of the coefficients of these linear combinations is invertible.  In the notations of Proposition \ref{DAsecteurs} this matrix is:
\[
M=\left(\begin{array}{cccc}
q_0-1 & q_0^2-1 & \ldots & q_0^a-1 \\
\ldots & \ldots & \ldots & \ldots \\
\ldots & \ldots & \ldots & \ldots \\
q_{a-1}-1 & q_{a-1}^2-1 & \ldots & q_{a-1}^a-1
\end{array}\right)
\]
and one show by elementary calculations \[
\det M=\left|\begin{array}{ccccc}
1& 1 & 1 & \ldots & 1\\
1& q_0 & q_0^2 & \ldots & q_0^a \\
\vdots& \ldots & \ldots & \ldots & \ldots \\
\vdots& \ldots & \ldots & \ldots & \ldots \\
1& q_{a-1}& q_{a-1}^2 & \ldots & q_{a-1}^a
\end{array}\right|=\prod_{i=0}^{a-1}(q_i-1) \prod_{i<j}(q_j-q_i)\neq0
\]
where the last inequality follows from $q_j=e^{2i\pi{jb-\beta\over a}}$, with $\beta\notin \ZZ$ and $a,b$ co-prime.
\end{proof}
\begin{theorem} \label{a.e.are-basis-in-dim2} For any $\beta\in \CC$ and for every Gevrey series $\varphi\in \cO_{\widehat{X|Y}}(s)$, $s\geq \frac{b}{a}$, which is a solution of the hypergeometric system $\cM_A(\beta)$, there is an holomorphic solution defined in a product of sectors, which is a neighbourhood of the product of the real negative axes, and which admits $\varphi$ as an asymptotic expansion. All these solutions can be described as an integral of the function $t^{-\beta-1}\exp(x_1t^a+x_2t^b)$ along a rapid decay cycle when $\beta\notin \ZZ\setminus \NN (a,b)$. When $\beta \in \ZZ \setminus \NN(a,b)$, the above integral solutions span a codimension one subspace and there is a one dimensional supplementary space obtained by expanding an integral along $[0,+\infty[$.
\end{theorem}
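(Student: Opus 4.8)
The plan is to reduce, via Proposition \ref{basis-gevrey-solutions-dim-2}, to realising each of the $a$ basis elements $\psi^{(j)}=\psi^{(j)}_{(a,b),\beta}$, $j=0,\dots,a-1$, as the asymptotic expansion of a suitable integral solution, distinguishing the three cases $\beta\notin\ZZ$, $\beta\in\NN(a,b)$ and $\beta\in\ZZ\setminus\NN(a,b)$. Throughout I use that a $\CC$--linear combination of integrals along rapid decay cycles is again an integral along a rapid decay cycle, and that by Proposition \ref{DAsecteurs}(3) and its proof the cycles $C_p$ can be chosen so that the corresponding asymptotic expansions are simultaneously valid on a common product of sectors containing a neighbourhood of the product of negative real axes (and the compact cycle $|t|=\epsilon$, resp.\ the exceptional path $[0,+\infty[$, are handled on $\{\Re x_1<0,\ \Re x_2<0\}$). \textbf{Case $\beta\notin\ZZ$:} this is precisely Proposition \ref{a.e.-gevrey-dim2-beta-generic}.

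\textbf{Case $\beta\in\ZZ$.} Let $j_0\in\{0,\dots,a-1\}$ be the unique index with $bj_0\equiv\beta\pmod a$, so that $q_{j_0}=e^{2i\pi(bj_0-\beta)/a}=1$ while $q_j\neq1$ for $j\neq j_0$, the $q_j$ being $a$ distinct $a$--th roots of unity since $\gcd(a,b)=1$. By Proposition \ref{DAsecteurs}(1) the coefficient of $\psi^{(j)}$ in the expansion of $I_{C_p}$ is $\lambda_j(q_j^{\ell}-1)$ with $\lambda_j\neq0$ for $j\neq j_0$ (here $|\ell-ap/b|<\tfrac12$), while the coefficient of $\psi^{(j_0)}$ vanishes when $\beta\in\ZZ\setminus\NN(a,b)$ and is not determined by this method when $\beta\in\NN(a,b)$. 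Choosing, as in the proof of Proposition \ref{DAsecteurs}, indices $p_1,\dots,p_{a-1}$ realising $\ell=1,\dots,a-1$, the $(a-1)\times(a-1)$ matrix $\big(\lambda_j(q_j^{\ell}-1)\big)_{j\neq j_0,\,1\le\ell\le a-1}$ has determinant $\big(\prod_{j\neq j_0}\lambda_j(q_j-1)\big)\prod_{i<j}(q_j-q_i)\neq0$ by the same elementary computation as in Proposition \ref{a.e.-gevrey-dim2-beta-generic}. Hence the $a-1$ rapid decay integrals $I_{C_{p_1}},\dots,I_{C_{p_{a-1}}}$ have expansions spanning $\bigoplus_{j\neq j_0}\CC\,\psi^{(j)}$ modulo $\CC\,\psi^{(j_0)}$; this is exactly the codimension one statement, once a solution realising $\psi^{(j_0)}$ is produced.

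\textbf{Realising $\psi^{(j_0)}$.} If $\beta\in\NN(a,b)$, write $\beta=bj_0+aq$ with $q\in\NN$; then $\psi^{(j_0)}$ is the polynomial $\sum_{m}\frac{[q]_{bm}}{[am+j_0]_{am}}x_1^{q-bm}x_2^{am+j_0}$, which by the computation in the proof of Proposition \ref{DAsecteurs} equals $\frac{1}{2\pi i\,q!\,j_0!}\int_{(|t|=\epsilon)}t^{-\beta-1}\exp(x_1t^a+x_2t^b)\,dt$ along the compact cycle $|t|=\epsilon$, which is a rapid decay cycle; together with the previous step all basis elements are then realised by rapid decay integrals. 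If instead $\beta\in\ZZ\setminus\NN(a,b)$, put $N:=(bj_0-\beta)/a$; since $\beta\notin\NN(a,b)$ one checks $N\ge1$. Consider the exceptional integral $I_\Gamma(\beta;x)=\int_0^{+\infty}t^{-\beta-1}\exp(x_1t^a+x_2t^b)\,dt$: for $\beta<0$ it converges absolutely when $\Re x_2<0$, and for a non-negative gap $\beta$ it is defined by its Hadamard finite part (equivalently by meromorphic continuation in $\beta$), the would-be polar term at $t=0$ being the coefficient of $t^{\beta}$ in $\exp(x_1t^a+x_2t^b)$, which vanishes precisely because $\beta\notin a\NN+b\NN$. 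Thus $I_\Gamma(\beta;x)$ is a well-defined holomorphic function on $\{\Re x_1<0,\ \Re x_2<0\}$, and the integration-by-parts identity for finite parts shows it is a solution of $\cM_A(\beta)$. The argument of Proposition \ref{expansion2} applies (the exponential remainder estimate is even simpler here since $\Re(x_2t^b)<0$ for all $t>0$) and yields a Gevrey asymptotic expansion $\sum_j\mu_j\psi^{(j)}$ with coefficient functions $c_k(x_1)=\frac{1}{k!}\,\mathrm{Pf}\!\int_0^{+\infty}t^{bk-\beta-1}e^{x_1t^a}\,dt$. For $k=j_0$ this last integral is genuinely convergent ($bj_0-\beta-1=aN-1\ge0$) and equals $\frac{(N-1)!}{a}|x_1|^{-N}$, so $c_{j_0}(x_1)\neq0$; as $\psi^{(j_0)}$ is the only basis element carrying a nonzero $x_2^{j_0}$--term, $\mu_{j_0}\neq0$. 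Hence the expansion of $I_\Gamma$ is not in the span of those of the $I_{C_p}$, so it spans the missing one-dimensional supplement, and $\psi^{(j_0)}$ is the expansion of $\frac{1}{\mu_{j_0}}\big(I_\Gamma-\sum_p c_p I_{C_p}\big)$ for a suitable rapid decay combination $\sum_p c_pI_{C_p}$.

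\textbf{Main obstacle.} The delicate part is the exceptional path: making precise the integral $I_\Gamma$ for non-negative gap values of $\beta$ (finite part, no logarithmic term, analytic dependence on $x$), verifying that it is still annihilated by $H_A(\beta)$, that it admits the Gevrey asymptotic expansion of Proposition \ref{expansion2}, and that $\mu_{j_0}\neq0$; the remaining ingredients — the Vandermonde-type non-vanishing and the bookkeeping ensuring a common product of sectors around the product of negative real axes — are the routine parts, already prepared in Propositions \ref{DAsecteurs} and \ref{a.e.-gevrey-dim2-beta-generic}.
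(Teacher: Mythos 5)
Your proposal is correct and follows essentially the same route as the paper: reduction to the basis $\psi^{(j)}$, the Vandermonde-type determinant for the expansions of the $I_{C_p}$ (the paper keeps the full $a\times a$ matrix and notes its rank is $a-1$ when $\beta\in\ZZ$, you pass directly to the invertible $(a-1)\times(a-1)$ minor), the compact circle $|t|=\epsilon$ for $\beta\in\NN(a,b)$, and the regularized integral along $[0,+\infty[$ otherwise. Your finite-part definition of $I_\Gamma$ coincides with the paper's Taylor-subtracted $J_\beta$ (the subtracted terms have vanishing finite part), and your check of $\mu_{j_0}\neq 0$ at the single convergent coefficient $c_{j_0}$ is a mild simplification of the paper's use of $c_{am+j_0}$ for large $m$.
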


\begin{proof}
When $\beta \notin \ZZ$, Proposition \ref{a.e.-gevrey-dim2-beta-generic} gives a complete proof of the statement. When $\beta \in \ZZ$ we write $\beta=j_0b+aq$, $0\leq j_0<a$. The proof of the same Proposition solves the case of the Gevrey series $\psi^{(j)}$ for $j\neq j_0$. Indeed the matrix $M$ in this proposition has its last column equal to zero and the row corresponding to $\psi^{(j_0)}$ is zero too. We find that the matrix $M$ is of rank exactly $a-1$ so that all  $\psi^{(j)}$ for $j\neq j_0$ are obtained as integrals on rapid decay cycles. The non obtained Gevrey series is:
\[
\psi^{(j_0)} = \sum_{m\geq 0}
\frac{[q]_{bm}}{[am+j_0]_{am} } x_1^{q-bm}x_2^{am+j_0}.
\]
It is a polynomial if and only if $q\geq 0$, that is when $\beta\in \NN(a,b)$. We notice that it is exactly the case where the integral along a circle of radius $\epsilon >0$ centered at the origin which is equal to
\[
\int_{C_b}t^{-\beta-1}e^{x_1t^a+x_2t^b}dt=2\pi i\sum_{\ell_1 a+\ell_2b=\beta}{x_1^{\ell_1}x_2^{\ell_2}\over\ell_1!\ell_2!}
\]
is non zero. Since this is a polynomial solution we are done in this case.

Finally when $\beta \in \ZZ\setminus \NN (a,b)$, that is when $q<0$,  we notice that there is an integral holomorphic solution given by the formula:
\begin{equation} \label{jota-beta-a-b}
J_{\beta}(x_1,x_2)=\int_0^{+\infty}t^{-\beta-1}\left(e^{x_1t^a+x_2t^b}-P_{\beta}(x_1,x_2,t)\right)dt
\end{equation}
where  $P_{\beta}$ is zero if $\beta <0$, and otherwise is the Taylor polynomial of degree $\leq\beta$  for $t\to e^{x_1t^a+x_2t^b}$ :
\[
P_{\beta}(x_1,x_2,t)=\sum _{\ell_1 a+\ell_2b\leq\beta}{x_1^{\ell_1}x_2^{\ell_2}t^{\ell_1 a+\ell_2b}\over\ell_1!\ell_2!}.
\]
In fact $\deg_tP_\beta<\beta$, by the condition $\beta \notin \NN(a,b)$. This yields the convergence of $J_{\beta}$ at $+\infty$.

The fact that $J_{\beta}$ is a solution of the system $H_A(\beta)$ is completely similar to the proof in \cite[Section 2]{Adolphson} for the case of rapid decay cycles. The reason is first that since $\del_1^b-\del_2^a$ annihilates $e^{x_1t^a+x_2t^b}$, it annihilates as well the coefficients of all the monomials $t^j$ in its power expansion and  hence also the polynomial $P_{\beta}(x_1,x_2,t)$. Concerning the Euler operator $\chi=ax_1\del_1+bx_2\del_2$ we just have to notice that the relation for the integrand which leads to the proof is still valid :
\begin{equation*}  \chi\left(e^{x_1t^a+x_2t^b}-P_{\beta}(x_1,x_2,t)\right)=t\del_t\left(e^{x_1t^a+x_2t^b}-P_{\beta}(x_1,x_2,t)\right).
\end{equation*}

In order to end the proof of Theorem \ref{a.e.are-basis-in-dim2} we just have to check that $J_{\beta}$ admits under the same conditions as in Section \ref{casab} an asymptotic expansion (of Gevrey order less than or equal to $\frac{b}{a}$) which must then be a linear combination $\sum\mu_j\psi^{(j)}$ of all the $\psi^{(j)}$.
We rewrite it as follows
 \[
J_{\beta}(x_1,x_2)=\int_0^{+\infty}\left(\sum_{k=0}^{\infty}t^{-\beta-1+bk}\left(e^{x_1t^a}-\sum_{\ell a\leq\beta-kb}\frac{x_1^\ell t^{\ell a}}{\ell!}\right){x_2^k\over k!}\right)dt
 \]
and this leads to the existence of an asymptotic expansion completely similar to the one given in the general case, but with a coefficient of  $x_2^k$ given by~:
\begin{equation}  \label{c-k-jota-a-b}
 c_k(x_1)=\frac1{k!}\int_0^{+\infty}t^{-\beta-1+bk}\left(e^{x_1t^a}-\sum_{\ell a\leq\beta-kb}\frac{x_1^\ell t^{\ell a}}{\ell!}\right)dt
\end{equation}
which is again a convergent integral. The change of variable $s=-x_1t^a$ for $x_1\in \RR_{<0}$ works exactly in the same way as in Remark \ref{sectors} and Proposition \ref{DAsecteurs}. In fact as soon as $k$ is large enough the correcting term $\sum_{\ell a\leq\beta-kb}\frac{x_1^\ell t^{\ell a}}{\ell!}$ is zero and we find explicitly~:
\[
 c_k(x_1)=\frac{1}{a k!}e^{-i \pi \frac{\beta - bk}{a}}\Gamma\left({bk-\beta\over a}\right)x_1^{\beta-kb\over a}.
\]
Considering the coefficients $c_{am+j_0}(x_1)$ for $m$ large enough, we see that the coefficient $\mu_{j_0}$ of $\psi_{A,\beta}^{(j_0)}$ is non zero in the asymptotic expansion of $J_{\beta}$ as expected.
\end{proof}

\subsection{A basis of Gevrey asymptotic expansions}  %\pacoverde{problema con los puntos $p$ y los n\'{u}meros $p$ ...}
Let $A=(a_1,\ldots,a_n)$ with integers $0<a_1<\cdots < a_n$ and $\gcd(a_1,\ldots,a_n)=1$. By a rapid decay cycle we always mean in this section a rapid decay cycle for the exponentials of both polynomials $\sum_{1\leq j\leq n} x_j t^{a_j}$ and $\sum_{1\leq j\leq n-1} x_j t^{a_j}$ . In Propositions \ref{expansion2} and \ref{a-e-is-Gevrey} we prove that for any $\beta \in \CC$ and any rapid decay 1-cycle $\gamma$ there is an asymptotic expansion of the integral  $I_\gamma(A,\beta,x)$. We denote it :
$$\Phi_\gamma(A,\beta;x) = \aex(I_\gamma(A,\beta,x)) = \aex{\left(\int_\gamma t^{-\beta-1} \exp\left(\sum_{j=1}^n x_j t^{a_j}\right) dt\right)}.$$  We also prove that this asymptotic expansion $\Phi_\gamma(A,\beta;x)$ is a germ of Gevrey series in $\cO_{\widehat{X|Y}}(s)$  at any point in $Y\setminus Z$ for all $s\geq \frac{a_n}{a_{n-1}}$. Let us consider a vector space $E$ consisting in formal linear combinations of geometric cycles of the above type. We get a map $ \xymatrix{\gamma \ar[r]^{G_E\;}&\Phi _{\gamma}}$ from $E$ to the space of Gevrey series solutions of the system.
When $\beta \in \ZZ\setminus \NN A$, and restricting to a product of sectors in the variables $x_{n-1},x_n$ centered on half lines whose respective arguments are $(\pi-a_{n-1}\theta$ and $\pi-a_n\theta)$, we may as in the previous Section define an asymptotic expansion for the integral
\[
J_{\beta}(x_1,\dots,x_n)=\int_{e^{i\theta}\cdot[0,+\infty[}t^{-\beta-1}\left(e^{x_1t^{a_1}+\dots+x_nt^{a_n}}-P_{\beta}(x_1,\dots,x_n,t)\right)dt.
\]
Here $P_{\beta}(x_1,\dots,x_n,t)$ is again the Taylor polynomial of degree $\beta$ for $t\to\exp\left(x_1t^{a_1}+\dots+x_nt^{a_n}\right)$.
When $\beta \in \ZZ\setminus \NN A$ we still denote $\Phi_\gamma(A,\beta;x)$ such an asymptotic expansion. By extension we still call a vector space of cycles the formal direct sum of a one dimensional space $\CC\cdot e^{i\theta}\cdot[0,+\infty[$ and a subspace made of rapid decay cycles.

The goal of the remaining part of this Section is to prove the following generalisation of the surjectivity statement for $G_E$ that follows directly from Theorem \ref{a.e.are-basis-in-dim2}.
\begin{theorem}\label{a.e.are-basis-a1-an} There exists an $a_{n-1}$ dimensional vector  space $E$
of cycles such that the map $G_E$ from $E$ to the set of germs of Gevrey asymptotic expansions is an isomorphism onto the stalk of $\cH om_{\cD_X}\left(\cM_A(\beta), \cO_{\widehat{X|Y}}(s)\right)$ at any point in $Y\setminus Z$ for $s\geq \frac{a_n}{a_{n-1}}$.\end{theorem}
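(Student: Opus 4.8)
The plan is to reduce the statement to the two-variable case already settled in Theorem~\ref{a.e.are-basis-in-dim2} (for $n=2$ the statement amounts to that theorem, with $a_{n-1}=a_1$), by transporting the restriction operations of Section~\ref{section-Gevrey-sol-hyp-sys} to the integrals. The engine is the elementary observation that setting a variable to $0$, the operators $\partial/\partial x_0$, and the substitution $t\mapsto t^{k}$ all commute with integration along a fixed cycle and with passage to the asymptotic expansion of Proposition~\ref{expansion2}. Explicitly: putting some $x_i=0$ in the integrand deletes the monomial $x_it^{a_i}$ from the exponent; for $A'=(1,a_1,\dots,a_n)$ one has $\partial_{x_0}^{\ell}I_\gamma(A',\beta;x_0,x)|_{x_0=0}=I_\gamma((a_1,\dots,a_n),\beta-\ell;x)$; and $I_\gamma((ka,kb),\beta;x)=\tfrac1k\,I_{\gamma^{[k]}}((a,b),\beta/k;x)$ where $\gamma^{[k]}$ is the image of $\gamma$ under $t\mapsto t^{k}$, while the rotations $\gamma\mapsto\zeta\gamma$ with $\zeta^{k}=1$ fix $\gamma^{[k]}$ but multiply $\partial_{x_0}^{\ell}(\,\cdot\,)|_{x_0=0}$ by $\zeta^{\ell}$ (up to a global factor $\zeta^{-\beta}$). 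Consequently each of the maps $\rho'$, $\rho^{s}_{X',X}$, $\varpi^{s}_\beta$ of Section~\ref{section-Gevrey-sol-hyp-sys}, applied to the asymptotic expansion $\Phi_\gamma$ of an integral, produces exactly the asymptotic expansion of the correspondingly transformed integral.

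With this I would run the chain of reductions already available in Section~\ref{section-Gevrey-sol-hyp-sys}. If $a_1>1$, pass to $A'=(1,a_1,\dots,a_n)$; then $\rho^{s}_{X',X}$ of~(\ref{restriction-1-a1-an-to-a1-an}) is an isomorphism when $\beta\notin\NN\setminus\NN A$ and has codimension-one image otherwise (Theorem~\ref{basis-phi(0,x)-generic-case}, Remark~\ref{basis-phi(0,x)-special-case}). Since $A'$ (or $A$ itself when $a_1=1$) has leading entry $1$, the restriction $\rho'$ of~(\ref{restriction-x2-x_{n-2}}), which kills the middle variables, is an isomorphism onto the Gevrey solutions of $\cM_{(1,a_{n-1},a_n)}(\beta)$ on $\CC^{3}$. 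Writing $(a_{n-1},a_n)=(ka,kb)$ with $\gcd(a,b)=1$, Proposition~\ref{varpi-for-k-geq-1} gives that $\varpi^{s}_\beta$ of~(\ref{varpi}) is an isomorphism onto $\bigoplus_{\mu=0}^{k-1}\cH om_{\cD_{\CC^2}}(\cM_{(a,b)}((\beta-\mu)/k),\cO_{\widehat{\CC^2|Y}}(s))$ off the locus $\beta\in\{0,\dots,k-1\}+k(\NN\setminus\NN(a,b))$, and has codimension-one image otherwise; and by the previous paragraph it carries $\Phi_\gamma$ to the tuple whose $\mu$-th entry is a nonzero multiple of $\Phi_{\gamma^{[k]}}((a,b),(\beta-\mu)/k;\,\cdot\,)$.

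For a generic $\beta$ --- precisely, as long as $\beta\notin\ZZ\setminus\NN A$, which makes all the maps above isomorphisms and all the two-variable parameters $(\beta-\mu)/k$ non-exceptional --- I would take the $a$ cycles $C_{p(1)},\dots,C_{p(a)}$ supplied by Proposition~\ref{DAsecteurs}(3): the index $p(i)$ depends only on $a,b$, not on $\beta$, so by Propositions~\ref{DAsecteurs} and~\ref{a.e.-gevrey-dim2-beta-generic} these cycles simultaneously give bases of all the spaces $\cH om_{\cD_{\CC^2}}(\cM_{(a,b)}((\beta-\mu)/k),\cdot)$, with asymptotic expansions all valid on one common product of sectors containing the product of the negative real half-lines (after restricting $\arg x_n$, equivalently $\arg x_2$ in the two-variable reduction, to a small interval about $\pi$). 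Fixing a preimage $\gamma_i$ of $C_{p(i)}$ under $t\mapsto t^{k}$ and setting $E:=\operatorname{span}\{\,\zeta^{m}\gamma_i:0\le m<k,\ 1\le i\le a\,\}$ with $\zeta=e^{2\pi i/k}$, one has $\dim E=ka=a_{n-1}$; the Vandermonde shape of $(\zeta^{m\ell})$ from the first paragraph shows the $\varpi^{s}_\beta$-images of $\{\Phi_{\zeta^{m}\gamma_i}\}$ form a basis of the direct sum, and transporting back through the isomorphisms $\varpi^{s}_\beta$, $\rho'$, $\rho^{s}_{X',X}$ --- compatibly with the integrals --- together with the dimension equality~(\ref{dim=a_n-1}) shows $G_E$ is an isomorphism onto $\cH om_{\cD_X}(\cM_A(\beta),\cO_{\widehat{X|Y}}(s))$.

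Finally, for $\beta\in\ZZ\setminus\NN A$ one of the restriction maps is only of codimension one, and correspondingly exactly one of the two-variable systems $\cM_{(a,b)}((\beta-\mu_0)/k)$ is exceptional; I would argue that the rapid decay cycles above then span a codimension-one subspace of $\cH om_{\cD_X}(\cM_A(\beta),\cO_{\widehat{X|Y}}(s))$ (using the codimension-one statements of Remark~\ref{basis-phi(0,x)-special-case} and Proposition~\ref{varpi-for-k-geq-1}(2) together with~(\ref{dim=a_n-1})), and that the missing dimension is supplied by the regularized integral $J_\beta(x)=\int_{e^{i\theta}[0,+\infty[}t^{-\beta-1}(\exp(\sum_j x_jt^{a_j})-P_\beta(x,t))\,dt$ of~(\ref{jota-beta-a-b}), which is a solution of $\cM_A(\beta)$ by the argument in the proof of Theorem~\ref{a.e.are-basis-in-dim2} and whose Gevrey asymptotic expansion, computed as in~(\ref{c-k-jota-a-b}), has a nonzero coefficient in the missing graded piece; replacing one rapid decay generator of $E$ by $\CC\cdot e^{i\theta}[0,+\infty[$ then yields the required $a_{n-1}$-dimensional space. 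The hardest part will be the orchestration in the third paragraph: a single choice of paths must at once give bases of all the auxiliary systems $\cM_{(a,b)}((\beta-\mu)/k)$, produce --- through the non-geometric operator $\partial/\partial x_0$ concealed in $\varpi^{s}_\beta$, which is tamed by the rotations $\zeta\gamma$ --- a basis of the whole direct sum, and have all the associated asymptotic expansions valid on one and the same product of sectors; this last requirement is exactly what forces restricting $\arg x_n$ to a small interval about $\pi$.
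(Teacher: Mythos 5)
Your proposal follows essentially the same route as the paper's proof: the reduction through $\rho'$, $\rho^{s}_{X',X}$ and $\varpi^{s}_\beta$ to the two-variable Theorem \ref{a.e.are-basis-in-dim2}, the lifting of the cycles $C_p$ through $t\mapsto t^{k}$ together with the root-of-unity rotations and the Vandermonde/DFT argument that isolates the components of $\varpi^{s}_\beta$, the common sectors obtained by pinching $\arg x_n$ near $\pi$, and the regularized integral $J_\beta$ along $[0,+\infty[$ for exceptional parameters are exactly the ingredients of Proposition \ref{a.e.basis-1-ka-kb} and of the subsequent general-case argument. The only (harmless) imprecision is in your last paragraph, where the codimension-one defect for $\beta\in\ZZ\setminus\NN A$ is attributed to a restriction map, whereas for $\beta$ a negative integer all the restriction maps are isomorphisms and the defect sits entirely in the exceptional two-variable system $\cM_{(a,b)}\left(\frac{\beta-r_0}{k}\right)$; both sub-cases are repaired by the same $J_\beta$, just as in the paper.
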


\begin{remark} 1) By \cite[Remark 4.12]{FC1} it is enough to prove the Theorem for points of the form $(0,\ldots,0,\varepsilon,0)\in X$ with $\varepsilon \not= 0$.

2) In the proof of this Theorem we will see that for $\beta \notin \ZZ \setminus \NN A$ the space $E$ is a space of rapid decay cycles. When $\beta \in \ZZ \setminus \NN A$ there exists a unique $e^{i\theta}\cdot [0,+\infty[$ which spans a one dimensional complement in $E$ to a space of rapid decay cycles.

3) In the statement of Theorem \ref{a.e.are-basis-a1-an} it must be understood that a Gevrey expansion can be obtained along any half line $e^{i\theta}\cdot [0,+\infty[$ in the variable $x_n$. The space $E$ depends on the arguments chosen for $x_{n-1},x_n$.

In the proof of Theorem \ref{a.e.are-basis-a1-an} we first focus on products of sectors in the variables $x_{n-1},x_n$ that are centered along the real negative axes, while the other variables $x_1,\dots,x_{n-2}$ are arbitrary in $\CC$. The general statement follows easily from this particular case by considering actions of roots of unity on the cycles and by controlling the width of the sectors of validity of the asymptotic expansions.
\end{remark}

\begin{corollary}\label{main-theorem-holds-for-dim-2} {\rm [of Theorem \ref{a.e.are-basis-in-dim2}]} Theorem \ref{a.e.are-basis-a1-an} holds for $n=2$ and a pair of sectors centered on the real negative axis for each of the variables $x_1,x_2$.\qed
\end{corollary}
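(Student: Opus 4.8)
The plan is to obtain the corollary as a direct repackaging of Theorem~\ref{a.e.are-basis-in-dim2} in the language of the map $G_E$. For $n=2$ we have $A=(a,b)$ and $a_{n-1}=a$, so by Proposition~\ref{basis-gevrey-solutions-dim-2} the target stalk $\cH om_{\cD_X}(\cM_A(\beta),\cO_{\widehat{X|Y}}(s))$ at a point of $Y\setminus\{(0,0)\}$ is $a$--dimensional with basis $\{\psi^{(j)}_{A,\beta}\}_{j=0}^{a-1}$ for every $s\geq b/a$. It therefore suffices to produce an $a$--dimensional space $E$ of cycles whose images under $G_E$ form a basis of this stalk, all sharing one product of sectors centered on the negative real axes as domain of validity.

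First I would treat the case $\beta\notin\ZZ$. Using Proposition~\ref{DAsecteurs}(3), for each $\ell\in\{1,\dots,a\}$ I choose $p=p(\ell)\in\{1,\dots,b\}$ with $|\ell-\frac{ap}{b}|<\frac12$ and, after restricting $x_2$ to a small sector $|\arg x_2-\pi|<\epsilon$, so that the enlarged sector of validity in $x_1$ contains the negative real axis. This yields $a$ rapid decay cycles $C_{p(1)},\dots,C_{p(a)}$ with a common product of sectors about the negative real axes. By Proposition~\ref{DAsecteurs}(1) one has $\Phi_{C_{p(\ell)}}=\sum_{j=0}^{a-1}(q_j^{\ell}-1)\lambda_j\psi^{(j)}$ with $\lambda_j\neq0$, and the matrix $((q_j^{\ell}-1))$ is exactly the invertible matrix $M$ of Proposition~\ref{a.e.-gevrey-dim2-beta-generic}. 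Setting $E=\bigoplus_{\ell=1}^{a}\CC\,C_{p(\ell)}$, the images $\{\Phi_{C_{p(\ell)}}\}$ form a basis and $G_E$ is the asserted isomorphism.

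Next I would handle $\beta\in\ZZ$, writing $\beta=j_0b+aq$ with $0\leq j_0<a$. Here $q_{j_0}=1$, so the proof of Theorem~\ref{a.e.are-basis-in-dim2} shows that $M$ has rank exactly $a-1$ and the rapid decay cycles reach only the span of the $\psi^{(j)}$ with $j\neq j_0$; choosing $C_{p(\ell)}$ for $\ell=1,\dots,a-1$ with a common sector as above recovers these $a-1$ basis vectors via the invertible $(a-1)\times(a-1)$ submatrix indexed by $j\neq j_0$. To supply the remaining $\psi^{(j_0)}$ I would adjoin one further cycle: when $\beta\in\NN(a,b)$ the solution $\psi^{(j_0)}$ is the polynomial realized by the compact cycle $C_b=(|t|=\epsilon)$, while when $\beta\in\ZZ\setminus\NN(a,b)$ it is realized by the integral $J_\beta$ of~(\ref{jota-beta-a-b}) along $e^{i\theta}[0,+\infty[$, whose expansion has nonzero coefficient $\mu_{j_0}$. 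In either case adjoining this cycle to the $a-1$ rapid decay cycles defines an $a$--dimensional $E$ on which $G_E$ is an isomorphism, with $E$ consisting of rapid decay cycles precisely when $\beta\notin\ZZ\setminus\NN(a,b)$.

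The single point genuinely needing attention --- and the step I would be most careful about --- is that all the chosen cycles admit one common product of sectors centered on the negative real axes, and not merely individual domains. This is exactly what Proposition~\ref{DAsecteurs}(3) provides: once $x_2$ is confined to $|\arg x_2-\pi|<\epsilon$, every relevant sector of validity in $x_1$ widens to contain the negative real axis, so the intersection over the finitely many chosen cycles remains a neighbourhood of the product of the negative real axes. Everything else is bookkeeping already carried out in Theorem~\ref{a.e.are-basis-in-dim2} together with Propositions~\ref{DAsecteurs} and~\ref{a.e.-gevrey-dim2-beta-generic}.
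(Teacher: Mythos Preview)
Your proposal is correct and follows exactly the paper's approach: the corollary is marked \qed\ in the paper because it is an immediate repackaging of Theorem~\ref{a.e.are-basis-in-dim2}, and you have simply unpacked that repackaging explicitly (the choice of $C_{p(\ell)}$ via Proposition~\ref{DAsecteurs}(3), the rank-$(a-1)$ argument for $\beta\in\ZZ$, and the supplementary cycle $C_b$ or $[0,+\infty[$ according as $\beta\in\NN(a,b)$ or not). Your emphasis on the common sector via Proposition~\ref{DAsecteurs}(3) is the right point to stress, and everything else is exactly the bookkeeping already carried out in the proofs of Proposition~\ref{a.e.-gevrey-dim2-beta-generic} and Theorem~\ref{a.e.are-basis-in-dim2}.
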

%\begin{proof}
%In the proof of Theorem \ref{a.e.are-basis-in-dim2} one describes, by using Propositions \ref{DAsecteurs} and \ref{a.e.-gevrey-dim2-beta-generic},  how to build  cycles $\gamma_1, \ldots,\gamma_{a}$ and the corresponding asymptotic expansions $\Phi_1(A,\beta), \ldots, \Phi_a(A,\beta)$, for $A=(a,b)$, $1\leq a < b$, $\gcd(a,b)=1$. If $\beta \notin \ZZ \setminus \NN(a,b)$ all the $\gamma_\tau$ are rapid decay cycles. When $\beta \in \ZZ \setminus \NN(a,b)$ among them a unique cycle  is $[0,+\infty[$.
%\end{proof}

In order to simply notations we denote $\sol(\cM)=\cH om_{\cD}(\cM, \cO_{\widehat{X|Y}}(\frac{a_n}{a_{n-1}}))$ the sheaf of Gevrey solutions of order less than or equal to  $\frac{a_n}{a_{n-1}}$ of a holonomic $\cD_X$--module $\cM$ on $X=\CC^n$ for $Y=(x_n=0)\subset X$.

\begin{remark} \label{remark-corollary-4-10} By Corollary \ref{main-theorem-holds-for-dim-2},  for all $\beta \in \CC$ and for each $r=0,\ldots,k-1$ there exists a family $\gamma_{1,r}, \ldots,\gamma_{a,r}$ of cycles such that the family $$\left\{\Phi_{\gamma_{1,r}}\left((a,b),\frac{\beta -r}{k};x_1,x_2\right),\ldots, \Phi_{\gamma_{a,r}}\left((a,b),\frac{\beta -r}{k};x_{1},x_2\right)\right\}$$ is a basis of the stalk of the solution space $\sol(\cM_{(a,b)}(\frac{\beta-r}{k}))$ at any point in $Y_1 \setminus  Z_1$. Here $Y_1\subset \CC^2$ (resp. $Z_1\subset \CC^2$) is the line $x_2=0$ (resp. $x_1=0$).

In the statement of the following Proposition we use the morphism $\varpi_\beta:= \varpi_\beta^{s}$, for $s=\frac{a_n}{a_{n-1}}$,  defined in (\ref{varpi}). Recall that, according to Proposition \ref{varpi-for-k-geq-1} in Section 2, $\varpi_\beta$ is an isomorphism if $\beta\notin \NN \setminus \bigcup_{r=0}^{k-1}(r+k\NN(a,b))$. But if there exists $0\leq r_0 < k$ (necessarily unique) such that  $\frac{\beta -r_0}{k}\in \ZZ\setminus \NN(a,b)$ then one of the cycles $\gamma _{\tau_0, r_0}$ is $[0,+\infty[$. All the other cycles $\gamma _{\tau , r}$ are in the set of rapid decay ones and their asymptotic expansions span the image of $\varpi_\beta$ which contains no non zero polynomial.
\end{remark}
%Let us prove Theorem \ref{a.e.a1-an} for the matrix $(1,ka,kb)$. Assume first $\beta \not\in \ZZ$

\begin{proposition}\label{a.e.basis-1-ka-kb}  Theorem \ref{a.e.are-basis-a1-an}
holds for the matrix $(1, ka, kb)$ with $\gcd(a,b)=1$ and $k\geq 1$. There exists a set of cycles $\{\widetilde{\gamma}_{\tau,r}\, \vert\, 1\leq \tau \leq a, 0\leq r < k\}$, such that the asymptotic expansions $\Phi_{\widetilde{\gamma}_{\tau,r}}((1,ka,kb), \beta)$ span the space of solutions $\sol(\cM_{(1,ka,kb)}(\beta)).$ More precisely :

If $\varpi_\beta$ is an isomorphism, the image of the germ of $\Phi_{\widetilde{\gamma}_{\tau,r}}((1,ka,kb), \beta)$ by the morphism $\varpi_\beta$ equals $$\left(0,\cdots,0,\frac{1}{k}\Phi_{\gamma_{\tau,r}}\left((a,b),\frac{\beta-r}{k}\right),0,\cdots,0\right).$$

If $\varpi_\beta$ is not an isomorphism, the same is true for all the pairs $(\tau,r)$ such that  $\Phi_{\gamma_{\tau,r}}$ is in the image of $\varpi_\beta$. This exclude exactly one pair $(\tau_0,r_0)$ characterised by $\beta =r_0+kq_0$, and $\Phi_{\tau_0,r_0}$ not in the image of $\varpi_\beta$.
Furthermore $\Phi_{\widetilde{\gamma}_{\tau_0,r_0}}((1,ka,kb), \beta)$ is a polynomial.
\end{proposition}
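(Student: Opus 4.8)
The plan is to reduce to the two-variable case, proved in Corollary \ref{main-theorem-holds-for-dim-2} (see also Remark \ref{remark-corollary-4-10}), via the morphism $\varpi_\beta=\varpi_\beta^{s}$ with $s=\frac{a_n}{a_{n-1}}=\frac ba$ from (\ref{varpi}), whose $\ell$-th component sends $\varphi$ to $\frac{\partial^\ell\varphi}{\partial x_0^\ell}(0,x_1,x_2)$. By Proposition \ref{varpi-for-k-geq-1}, $\varpi_\beta$ is injective; it is onto $\bigoplus_{\ell=0}^{k-1}\sol\bigl(\cM_{(a,b)}(\tfrac{\beta-\ell}{k})\bigr)$ precisely when $\beta\notin\NN$ or $\beta\in\bigcup_r(r+k\NN(a,b))$, and in the remaining case it has a one-dimensional kernel spanned by the polynomial solution $\varphi^{(j_0)}_{A',\beta}$ and an image of codimension one. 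For each $r$, Corollary \ref{main-theorem-holds-for-dim-2} and Remark \ref{remark-corollary-4-10} furnish cycles $\gamma_{1,r},\dots,\gamma_{a,r}$ for the pair $(a,b)$ such that $\{\Phi_{\gamma_{\tau,r}}((a,b),\tfrac{\beta-r}k)\}_{\tau=1}^{a}$ is a basis of $\sol(\cM_{(a,b)}(\tfrac{\beta-r}k))$, all of rapid decay except, when some $\tfrac{\beta-r}k\in\ZZ\setminus\NN(a,b)$, for a single half-line $e^{i\theta}[0,+\infty[$. It will then suffice to exhibit cycles $\widetilde\gamma_{\tau,r}$ for $A'=(1,ka,kb)$ with
\[
\varpi_\beta\bigl(\Phi_{\widetilde\gamma_{\tau,r}}(A',\beta)\bigr)=\Bigl(0,\dots,0,\ \tfrac1k\Phi_{\gamma_{\tau,r}}\bigl((a,b),\tfrac{\beta-r}k\bigr),\ 0,\dots,0\Bigr),
\]
the non-zero entry being in position $r$: indeed the vectors on the right-hand side then form either a basis of the target or a basis of its codimension-one image.

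The construction I have in mind uses the substitution $t\mapsto t^{k}$, the natural bridge between $\cM_{A'}$ and $\cM_{(a,b)}$. First I would pick, for each $\gamma_{\tau,r}$, a cycle $\kappa_{\tau,r}$ of the $(1,ka,kb)$-type whose push-forward under $t\mapsto t^{k}$ is $\gamma_{\tau,r}$: the keyhole contour of opening angle $\tfrac{2\pi p}{b}$ for $(a,b)$ is the push-forward of the keyhole contour of opening angle $\tfrac{2\pi p}{kb}$ for $(1,ka,kb)$, and $e^{i\theta}[0,+\infty[$ is the push-forward of $e^{i\theta/k}[0,+\infty[$ (legitimate, since a half-line occurs only when $\tfrac{\beta-r}k\in\ZZ_{<0}$, hence $\beta<0$ and $\beta\in\ZZ\setminus\NN(1,ka,kb)$, so the exceptional integral of Section \ref{casab} is available in three variables as well). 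Then, with $\zeta=e^{2\pi i/k}$ and $\mu_r=e^{2\pi i(r-\beta)/k}$, I would set
\[
\widetilde\gamma_{\tau,r}=\frac1k\sum_{j=0}^{k-1}\mu_r^{-j}\,[\zeta^j]\kappa_{\tau,r},
\]
where $[\zeta^j]\kappa$ denotes $\kappa$ rotated by $\zeta^j$, the branch of $t^{-\beta-1}$ being continued along the rotation. Because $\zeta^{ka}=\zeta^{kb}=1$, rotating by $\zeta^j$ leaves the arguments of $x_1t^{ka}$ and $x_2t^{kb}$ at infinity unchanged, so each $[\zeta^j]\kappa_{\tau,r}$ is again a cycle of rapid decay (or of the exceptional type) with exactly the same sector of validity as $\kappa_{\tau,r}$; the sector bookkeeping thus collapses to the one already carried out in dimension two. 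Moreover, since one full turn around the origin multiplies a twisted class by the monodromy factor $e^{-2\pi i\beta}$ and $\mu_r^{-k}=e^{2\pi i\beta}$, the chain $\widetilde\gamma_{\tau,r}$ is an eigenvector of the rotation with eigenvalue $\mu_r$ — which is the conceptual reason it will turn out to be ``concentrated in position $r$''.

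The computation of $\varpi_\beta(\Phi_{\widetilde\gamma_{\tau,r}})$ then runs as follows. By Proposition \ref{expansion2} and differentiation under the integral sign, the coefficient of $x_2^{m}$ in $\frac{\partial^\ell}{\partial x_0^\ell}\Phi_{\widetilde\gamma_{\tau,r}}(0,x_1,x_2)$ equals $\frac1{m!}\int_{\widetilde\gamma_{\tau,r}}t^{kbm-\beta-1+\ell}\exp(x_1t^{ka})\,dt$; replacing $\widetilde\gamma_{\tau,r}$ by its definition, each summand $[\zeta^j]\kappa_{\tau,r}$ multiplies the corresponding integral over $\kappa_{\tau,r}$ by $\zeta^{j(kbm-\beta+\ell)}=e^{2\pi i j(\ell-\beta)/k}$, so the coefficient is a multiple, independent of $j$, of $\sum_{j=0}^{k-1}e^{2\pi i j(\ell-r)/k}$. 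This sum vanishes for $0\le\ell<k$, $\ell\neq r$, whence $\frac{\partial^\ell}{\partial x_0^\ell}\Phi_{\widetilde\gamma_{\tau,r}}(0,\cdot)=0$; and for $\ell=r$ it equals $k$, while the change of variable $u=t^{k}$ carries $\frac1{k\,m!}\int_{\kappa_{\tau,r}}t^{kbm-\beta-1+r}\exp(x_1t^{ka})\,dt$ to $\frac1{k\,m!}\int_{\gamma_{\tau,r}}u^{bm-(\beta-r)/k-1}\exp(x_1u^{a})\,du$, i.e.\ $\tfrac1k$ times the coefficient of $x_2^m$ in $\Phi_{\gamma_{\tau,r}}((a,b),\tfrac{\beta-r}k)$. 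This establishes the displayed identity for $\varpi_\beta(\Phi_{\widetilde\gamma_{\tau,r}})$.

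To conclude: when $\varpi_\beta$ is an isomorphism, the $ka$ vectors on the right-hand side above form a basis of the target (each $\{\Phi_{\gamma_{\tau,r}}\}_\tau$ being a basis of the $r$-th summand), so $\{\Phi_{\widetilde\gamma_{\tau,r}}\}$ is a basis of $\sol(\cM_{(1,ka,kb)}(\beta))$, and the span $E$ of the $\widetilde\gamma_{\tau,r}$ (which is then $ka$-dimensional) answers the question. When $\varpi_\beta$ is not an isomorphism, write $\beta=r_0+kq_0$ with $q_0\in\NN\setminus\NN(a,b)$; then all $\gamma_{\tau,r}$ with $(\tau,r)\neq(\tau_0,r_0)$ are of rapid decay and their expansions span the image of $\varpi_\beta$, so the $ka-1$ solutions $\Phi_{\widetilde\gamma_{\tau,r}}$, $(\tau,r)\neq(\tau_0,r_0)$, are linearly independent and span a subspace $V$ with $V\cap\ker\varpi_\beta=0$; since $\beta\in\NN=\NN(1,ka,kb)$, I would take for $\widetilde\gamma_{\tau_0,r_0}$ the compact cycle $\frac1{2\pi i}\{|t|=\epsilon\}$, whose integral is the polynomial $\sum_{\ell_1+ka\ell_2+kb\ell_3=\beta}x_0^{\ell_1}x_1^{\ell_2}x_2^{\ell_3}/(\ell_1!\,\ell_2!\,\ell_3!)$, a non-zero polynomial solution, hence — polynomial solutions being one-dimensional for $\beta\in\NN A'$ — a non-zero multiple of the generator of $\ker\varpi_\beta$ and therefore outside $V$; thus $\{\Phi_{\widetilde\gamma_{\tau,r}}\}$ is again a basis, and $\Phi_{\widetilde\gamma_{\tau_0,r_0}}$ is a polynomial. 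I expect the real obstacle to be not this algebra — essentially a Fourier projection onto the eigenspaces of the $\ZZ/k$-rotation, followed by the dimension count of Proposition \ref{varpi-for-k-geq-1} — but the analytic bookkeeping with twisted rapid-decay cycles: checking that the averaged chain $\widetilde\gamma_{\tau,r}$ is a genuine rapid-decay cycle, that rotation acts with the asserted monodromy factor, and that $u=t^{k}$ transports the asymptotic expansions faithfully.
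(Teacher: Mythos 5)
Your construction is correct and is essentially the paper's own proof: the $k$-th root lift $t\mapsto t^k$ followed by averaging over the rotations $\zeta^j$ is exactly the paper's inversion of the matrix $(\omega^{-\nu\ell})_{0\leq\nu,\ell\leq k-1}$, and the exceptional contributions (the compact circle producing the polynomial when $\varpi_\beta$ is not an isomorphism, the regularized half-line integral otherwise) are handled the same way. The only divergences are cosmetic --- for the half-line with $\varpi_\beta$ an isomorphism you Fourier-average the lifted ray, whereas the paper keeps $[0,+\infty[$ itself and subtracts a correction $\sum_{(\tau,r)\neq(\tau_0,r_0)}c_{\tau,r}\widetilde{\gamma}_{\tau,r}$ to kill the components in positions $r\neq r_0$; and your parenthetical claim that a half-line occurs only when $\frac{\beta-r}{k}\in\ZZ_{<0}$ is accurate only in the isomorphism case (gaps of the semigroup $\NN(a,b)$ also force a half-line, but there you correctly switch to the compact cycle), which does not affect the argument.
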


\begin{proof} We consider the germs at a point $(0,\varepsilon,0)$ with $\varepsilon \not= 0$ and the stalk of the direct sum at $(\varepsilon, 0)$. We write $B'=(1,ka,kb)$ and $B=(a,b)$, and $(x_0,x_1,x_2)$ for coordinates in $\CC^3$. The proof here depends heavily on Proposition \ref{varpi-for-k-geq-1} in Section 2.
%%When $\beta\notin \ZZ$ or when $\beta\in \bigcup_{r=0}^{k-1}(r+k\NN(a,b))$ the morphism
%$\varpi_\beta^s$ studied in this Proposition is an isomorphism and all the cycles  that we obtain
%are rapid decay cycles. When $\beta \in \ZZ \setminus \bigcup_{r=0}^{k-1}(r+k\NN(a,b))$ one of
%these cycles is special. This last case contains the case $\beta \in \NN \setminus
%\bigcup_{r=0}^{k-1}(r+k\NN(a,b))$ where $\varpi_\beta^s$ is not an isomorphism.

Let us assume first that $\gamma_{\tau,r}$ is one of the rapid decay cycles among those considered in Remark \ref{remark-corollary-4-10}. They are relative to the matrix $B$ and $\frac{\beta-r}{k}$ and this excludes $[0,+\infty[$, when $\beta \in r_0+k(\ZZ\setminus\NN(a,b))$ for some $r_0\in \{0,\dots , k-1\}$. Denote it for short $\gamma:=\gamma_{\tau,r}$ and choose a $k^{\rm{th}}$ root $\widetilde{\gamma}$ in $\CC^*$ : $\widetilde{\gamma}(t)^k=\gamma(t)$.

 The cycle $\widetilde{\gamma}$ is of rapid decay with respect to $B'$ and we develop the integral $$I_{\widetilde{\gamma}}=I_{\widetilde{\gamma}}(x_0,x_1,x_2) = I_{\widetilde{\gamma}}(B',\beta;x_0,x_1,x_2)=\int_{\widetilde{\gamma}} t^{-\beta-1}\exp(x_0t+x_1t^{ka}+x_2t^{kb}) dt$$ as $$I_{\widetilde{\gamma}}(x_0,x_1,x_2)= \sum_{\ell=0}^{k-1} x_0^\ell J_{\widetilde{\gamma}}^\ell(x_0^k,x_1,x_2).$$ Notice that $\frac{\partial^\ell I_{\widetilde{\gamma}}}{\partial x_0^\ell}(0,x_1,x_2)=\ell !J_{\widetilde{\gamma}}^\ell(0,x_1,x_2)$ and the change of variables $s=t^k$ shows that :
\begin{align*}
I_{\gamma}\left(B, \frac{\beta-r}{k}; x_1,x_2\right):=&\int_{\gamma} s^{-\frac{\beta -r}{k}} \exp(x_1s^a + x_2 s^b) \frac{ds}{s} \\=& \int_{\widetilde{\gamma}} t^{-\beta +r} \exp(x_1t^{ka} + x_2 t^{kb}) k\frac{dt}{t}=k\frac{\partial^r I_{\widetilde{\gamma}}}{\partial x_0^r}(0,x_1,x_2)
\end{align*}

Let $\omega= e^{\frac{2 i \pi}{k}}$ and for $\nu=0,\ldots,k-1$, $\widetilde{\gamma}^{(\nu)}:= \omega^{-\nu}\widetilde{\gamma}$. We can write
$$I_{\widetilde{\gamma}}(x_0,x_1,x_2) = \omega^{-\nu\beta} \int_{\widetilde{\gamma}^{(\nu)}} s^{-\beta-1}\exp(x_0\omega^\nu s + x_1s^{ka}+x_2s^{kb}) ds = \omega^{-\nu\beta} I_{\widetilde{\gamma}^{(\nu)}}(x_0\omega^\nu, x_1,x_2)$$ and then $$I_{\widetilde{\gamma}^{(\nu)}} (x_0,x_1,x_2) = \omega^{\nu\beta} I_{\widetilde{\gamma}}(x_0 \omega^{-\nu}, x_1,x_2) = \omega^{\nu\beta} \sum_{\ell=0}^{k-1} \omega^{-\nu\ell} x_0^\ell J_{\widetilde{\gamma}}^\ell(x_0^k,x_1,x_2).$$ The matrix $(\omega^{-\nu\ell})_{0\leq \nu,\ell\leq k-1}$ being invertible we can write \begin{equation}\label{J-integrals} x_0^\ell J_{\widetilde{\gamma}}^\ell(x_0^k,x_1,x_2)= \sum_ {\nu=0}^{k-1} \lambda_{\ell,\nu} \omega^{-\nu\beta} I_{\widetilde{\gamma}^{(\nu)}}(x_0,x_1,x_2)\end{equation}
for some $\lambda_{\ell,\nu}\in \CC$.
Now the cycle $\widetilde{\gamma}_{\tau,r}:= \sum_{\nu=0}^{k-1} \lambda_{r,\nu}\omega^{-\nu\beta} \widetilde{\gamma}^{(\nu)}$ yields the expected result for $\gamma=\gamma_{\tau,r}$ :
 \[
 \begin{cases}\frac{\partial^\ell I_{\widetilde{\gamma}_{\tau,r}}}{\partial x_0^\ell}(0,x_1,x_2)=0
 \qquad\text{  if } \ell\not= r \\
\frac{\partial^r I_{\widetilde{\gamma}_{\tau,r}}}{\partial x_0^r}(0,x_1,x_2)=r! J^r_{\widetilde{\gamma}_{\tau,r}}(0,x_1,x_2) = \frac{1}{k}I_\gamma\left(B, \frac{\beta-r}{k}; x_1,x_2\right).
\end{cases}
\]

Consider the non rapid decay case $\gamma_{\tau_0,r_0}=[0,+\infty[$. This happens when $\beta \in \ZZ \setminus \bigcup_{r=0}^{k-1}(r+k\NN(a,b))$, $\beta = \beta' k+r_0$ for unique integers $\beta'$ and $0\leq r_0< k$. %Write by Euclidean division $\beta'=ha+p_0$, $0\leq p_0< a$.
%% We can apply the argument for $\beta \not\in \ZZ$ to any $(p,r) \not= (p_0,r_0)$ to prove that
%there exists a cycle $\widetilde{\gamma}_{p,r}$ such that the image of
%$\Phi_{\widetilde{\gamma}_{p,r}}(B',\beta)$ by the morphism $\varpi_\beta$ is $(0,\ldots,0,\frac{1}
%{k}\Phi_{\gamma_{p,r}}(B, \frac{\beta-r}{k}; x_1,x_2),0,\ldots,0)$.
We know by Theorem \ref{a.e.are-basis-in-dim2} that $\Phi_{\tau_0,r_0}$ is the asymptotic expansion attached to the integral :
$$J_{\frac{\beta-r_0}k,B}(x_1,x_2)=\int_{0}^{+\infty} s^{-\frac{\beta-r_0}k}(\exp(x_1s^a+x_2s^b) - P_{\frac{\beta-r_0}k}(x_1,x_2,s))\frac{ds}s$$
When $\varpi_\beta$ is an isomorphism, we consider the expression
$$J_{\beta,B'}(x_0,x_1,x_2)=\int_{0}^{+\infty} t^{-\beta-1}(\exp(x_0t+x_1t^{ka}+x_2t^{kb}) - P_\beta(x_0,x_1,x_2,t))dt$$
with notations analogous to those used in the proof of Theorem \ref{a.e.are-basis-in-dim2}. It can be proved as in Theorem \ref{a.e.are-basis-in-dim2} that %$J_{\frac{\beta-r}k,B}$
$J_{\beta,B'}$ is a holomorphic solution of the system $\cM_{B'}(\beta)$ and that it admits an asymptotic expansion which is of Gevrey order less than or equal to $\frac{b}{a}$ that we denote $\Phi_{[0,+\infty[}$. A straightforward calculation, with the change of variables $s=t^k$ shows that:
\[
\frac{\partial^{r_0} J_{\beta,B'}}{\partial x_0^{r_0}}(0,x_1,x_2)=J_{\frac{\beta-r_0}k,B}(x_1,x_2)
\]
so that the $r_0$-component of $\varpi_\beta(\Phi_{[0,+\infty[})$, is equal to $\Phi_{\tau_0,r_0}$.
Using the already determined asymptotic expansions for all others $\Phi_{\widetilde{\gamma}_{p,r}}$ we get the expected result with a uniquely determined linear combination $\widetilde{\gamma}_{\tau_0,r_0}=[0,+\infty[-\sum_{(\tau,r)\neq(\tau_0,r_0)}c_{\tau,r}\widetilde{\gamma}_{\tau,r}.$

If $\beta \in \NN $, the integral along the cycle $C_b$ (see notations in Subsection \ref{casab}) $$ \int_{C_b} t^{-\beta -1} \exp(x_0t+x_1t^{ka}+ x_2t^{kb})dt$$ is a polynomial. Therefore if $\beta \in r_0+k(\NN\setminus \NN(a,b))$ for some $r_0\in \{0\dots , k-1\}$, its image by $\varpi_\beta$ is zero and it is the missing asymptotic expansion in the solution space $\sol(\cM_{(1,ka,kb)}(\beta))$ by the final part of Remark \ref{remark-corollary-4-10}.
This finishes the proof.\end{proof}

\begin{proof} {\rm [of Theorem \ref{a.e.are-basis-a1-an}, general case]}. We may assume $n\geq 3$. We simply write $ka=a_{n-1}, kb=a_n$ for some integer $k\geq 1$ and $\gcd(a,b)=1$.

We first assume $a_1>1$ and denote as usual $A'=(1,a_1,\cdots,a_n)$. We also denote $B'=(1,a_{n-1},a_n)=(1,ka,kb)$ and $B=(a,b)$.  Let us consider the following diagram

$$\xymatrixcolsep{3pc}
\xymatrix{
\sol(\cM_{A'}(\beta))_p \ar[r]^{{\rho'}} \ar[d]^{{\rho}}
&\sol(\cM_{{B}'}(\beta))_p
%\ar[d]^{{\varpi_\beta}}
\\
\sol(\cM_{A}(\beta))_p.
%&\bigoplus_{r=0}^{k-1}\sol(\cM_{{B}}(\frac{\beta-r}{k}))_p
}$$

Let us explain the restriction morphisms in the above diagram:
$\rho'$ is the restriction defined as  $${\rho}'(f(x_0,x_1,\ldots,x_n))= f(x_0,0,\ldots,0,x_{n-1},x_n).$$ Similarly $\rho(f(x_0,x_1,\ldots,x_n))=f(0,x_1,\ldots,x_n)$.
%and $\varpi_\beta:=\varpi_\beta^s$ is the morphism defined in Subsection \ref{restriction-to-x0} for $s=\frac{b}{a}$.
We consider a point $p=(0,\dots,\varepsilon,0)\in \CC^{n+1}$ with $\varepsilon \not=0$ and we also denote $p$ the image of this point in the different considered spaces.

The morphism ${\rho'}$ is an isomorphism for any $ \beta\in \CC$, see Subsection \ref{case-a1-equal-1}.
The morphism ${\rho}$ is an isomorphism if $\beta \in \NN \setminus \NN A$, see Theorem \ref{basis-phi(0,x)-generic-case} and Remark \ref{basis-phi(0,x)-special-case}.%,  and $\varpi_\beta$ is an isomorphism if $\beta \not\in \bigcup_{r=0}^{k-1} (r+k(\NN \setminus \NN (a,b))$, see Proposition \ref{varpi-for-k-geq-1}.

Let us consider the set of asymptotic expansions $$\{\Phi_{\widetilde{\gamma}_{\tau,r}}(A',\beta)\, \vert\, 1\leq \tau \leq  a, 0\leq r < k\}$$ where $\widetilde{\gamma}_{\tau,r}$ is the cycle built in
the proof of Proposition \ref{a.e.basis-1-ka-kb}. The image by $\rho'$ of $\Phi_{\widetilde{\gamma}_{\tau,r}}(A',\beta)$ is just
$\Phi_{\widetilde{\gamma}_{\tau,r}}(B',\beta)$. This proves the theorem for $A'$ and then for $A$ if $\beta \not\in \NN \setminus \NN A$ (because in this case $\rho$ is an isomorphism and the image of $\Phi_{\widetilde{\gamma}_{\tau,r}}(A',\beta)$ is precisely $\Phi_{\widetilde{\gamma}_{\tau,r}}(A,\beta)$).

If $\beta \in \NN \setminus \NN A$ then we have again
$\rho(\Phi_{\widetilde{\gamma}_{\tau,r}}(A',\beta))=\Phi_{\widetilde{\gamma}_{\tau,r}}(A,\beta)$ for all $(\tau,r)\not=(\tau_0,r_0)$. The integral
\begin{equation} \label{jota-beta-a1-an} J_\beta := \int_0^{+\infty} t^{-\beta -1} \left(\exp\left(\sum_{i=1}^n x_it^{a_i}\right) -P_\beta (x_1,\ldots,x_n,t)\right)dt
\end{equation}
defines a holomorphic function in a domain $\CC^{n-2}\times S_{n-1}\times S_n$ where $S_i$ is a sector in $\CC$ which is a neighbourhood of the real negative axis for $i=n-1,n$. Here $P_\beta$ is the Taylor polynomial, for the exponential,  of degree $\leq\beta$ in $t$.

As usual we consider, for $k$ big enough,  \begin{equation} \label{c-k-jota-a1-an} c_{k}(x_1,\ldots,x_{n-1}) = \frac{1}{k !} \int_0^{+\infty} t^{-\beta -1+a_nk}\exp\left(\sum_{i=1}^{n-1} x_i t^{a_i}\right) dt \end{equation}
 the coefficient of $x_n^{k}$ in the expansion of $J_\beta$. By developing we get
$$c_{k}(x_1,\ldots,x_{n-1}) = \sum_{m_1,\ldots,m_{n-2} \geq 0} c_{m_1,\ldots,m_{n-2},k}(x_{n-1})x_1^{m_1} \cdots x_{n-2}^{m_{n-2}}$$ where, writing $m_n=k$ big enough,  $$c_{m_1,\ldots,m_{n-2},m_n}(x_{n-1}) = \frac{1}{m_1! \cdots m_{n-2}! m_n!} \int_{0}^{+\infty} t^{-\beta -1 + \sum_{i\neq n-1} a_i m_i} e^{x_{n-1}t^{a_{n-1}}} dt.$$ Up to a scalar multiple this last integral equals $$x_{n-1}^{\frac{\beta - \sum_{i\neq n-1} a_i m_i}{a_{n-1}}} \Gamma\left(\frac{-\beta + \sum_{i\neq n-1} a_i m_i}{a_{n-1}}\right).$$ The condition $\beta \in \NN \setminus \NN A$ implies that the argument of the Gamma factor is never a non-positive integer. Writing $\beta = q a_{n-1} + j_0$ with $0\leq j_0 < a_{n-1}$ and choosing $m_1,\ldots, m_{n-2}, m_{n-1},m_n \geq 0$ such that $j_0+a_{n-1}m_{n-1} = \sum_{i\neq n-1} a_i m_i$ we see that the corresponding exponent of $x_{n-1}$ in the expansion of $J_\beta$ is $\frac{\beta - \sum_{i\not= n-1} a_i m_i}{a_{n-1}} = q-m_{n-1}$ which is a negative integer if $m_{n-1}$ is large enough. Moreover,  the asymptotic expansion of $J_\beta$ is a Gevrey series solution of $\cM_A(\beta)$ of order less than or equal to $\frac{a_n}{a_{n-1}}$ which is linearly independent of the set $\{ \varphi_{A',\beta}^{(j)}(0,x)\, \vert \, j\not= j_0\}$.  This finishes the proof of the theorem for $A$ if $a_1>1$.

If finally $a_1=1$ then we apply previous discussion by using the restriction to the case $(1,a_{n-1},a_n)$.
\end{proof}

Up to now we have considered only asymptotic expansions in a neighbourhood of the real negative axes for $x_{n-1},x_n$. Looking at the reductions that we have carried out we see that it is sufficient to check the general statement about the arguments when $n=2$. We set $A=(a,b)$. If we consider $x_2=y_2e^{i\theta}$ of argument $\theta$ and changing all the cycles $\gamma$ into $e^{i\frac{\pi-\theta}{b}}\cdot \gamma$, we are reduced to the real negative case for $x_2$. Now we conclude with the following~:
\begin{lemma}
Theorem \ref{a.e.are-basis-a1-an} holds for $n=2$ in a product of sectors near the real negative axis for $x_2$ and an arbitrary argument for $x_1$.
\end{lemma}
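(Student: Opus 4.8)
The plan is to deduce the statement from the case already established in Corollary~\ref{main-theorem-holds-for-dim-2} — namely $n=2$ with both $x_1$ and $x_2$ near the negative real axis — by rotating the integration cycles and tracking how such a rotation acts on the integral and on its Gevrey expansion.

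The elementary tool is the change of variable $t=e^{i\eta}s$, $\eta\in\RR$, which gives $I_{e^{i\eta}\gamma}(A,\beta;x_1,x_2)=e^{-i\eta\beta}\,I_{\gamma}(A,\beta;e^{ia\eta}x_1,e^{ib\eta}x_2)$, so that $e^{i\eta}\gamma$ is a rapid decay cycle for $(x_1,x_2)$ exactly when $\gamma$ is one for $(e^{ia\eta}x_1,e^{ib\eta}x_2)$. I would first observe that if $\Phi_\gamma(A,\beta;x_1,x_2)=\sum_j\mu_j\psi^{(j)}$ is valid on a product of sectors $S\times T$, then $\Phi_{e^{i\eta}\gamma}=\sum_j\mu_j\psi^{(j)}$ — with the \emph{same} coefficients — is valid on $e^{-ia\eta}S\times e^{-ib\eta}T$. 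The reason the coefficients are unchanged is that every monomial $x_1^{c_1}x_2^{c_2}$ occurring in $\psi^{(j)}$ satisfies $ac_1+bc_2=\beta$, hence under $(x_1,x_2)\mapsto(e^{ia\eta}x_1,e^{ib\eta}x_2)$ it picks up the single factor $e^{i\eta(ac_1+bc_2)}=e^{i\eta\beta}$, independent of $j$ and of the monomial; so $\psi^{(j)}(e^{ia\eta}x_1,e^{ib\eta}x_2)=e^{i\eta\beta}\psi^{(j)}(x_1,x_2)$ for the branch continued along the rotation, and the factor $e^{i\eta\beta}$ cancels the one in front of the integral. Making this phase bookkeeping precise (consistent branches of the $\psi^{(j)}$) is the one delicate point.

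For the main argument, assume first $\beta\notin\ZZ$ and fix a target argument $\phi_0$ for $x_1$; the target for $x_2$ is a small sector $W_0=\{|\arg x_2-\pi|<\epsilon\}$. By Corollary~\ref{main-theorem-holds-for-dim-2} and the enlargement carried out in Proposition~\ref{DAsecteurs}(3) there are paths $C_{p(1)},\dots,C_{p(a)}$ — one for each $\ell\in\{1,\dots,a\}$, with $|\alpha_{p(\ell)}|\le \pi a/b$ minimal — whose expansions $\Phi_{C_{p(\ell)}}=\sum_j M_{\ell j}\psi^{(j)}$, with $M$ invertible by Proposition~\ref{a.e.-gevrey-dim2-beta-generic}, are valid on $S^{\mathrm{ext}}_{p(\ell)}\times W_0$, where $S^{\mathrm{ext}}_{p(\ell)}$ is an open $x_1$-sector around the negative real direction of angular width $\pi-|\alpha_{p(\ell)}|+\frac ab(\pi-2\epsilon)$. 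Then, for each $\ell$, I would choose $q_\ell\in\ZZ$ and $|d_\ell|<\epsilon/b$ and set $\eta_\ell=\frac{2\pi q_\ell}{b}+d_\ell$. Since $b\eta_\ell\equiv bd_\ell\in(-\epsilon,\epsilon)\pmod{2\pi}$, the sector $e^{-ib\eta_\ell}W_0$ still contains the negative real direction; and $e^{-ia\eta_\ell}S^{\mathrm{ext}}_{p(\ell)}$, as $d_\ell$ varies, sweeps out $S^{\mathrm{ext}}_{p(\ell)}$ turned by $-2\pi q_\ell a/b$ and thickened by $\pm a\epsilon/b$, i.e. a sector of width $\pi-|\alpha_{p(\ell)}|+a\pi/b\ge\pi$. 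As $q_\ell$ runs over $\ZZ$ the angles $-2\pi q_\ell a/b$ fill, mod $2\pi$, the whole grid $\frac{2\pi}{b}\ZZ$ (because $\gcd(a,b)=1$); since these sectors have width $\ge\pi\ge 2\pi/b$ — the inequality being strict except for $b=2$, where $a=1$ and the sharper bound $|\alpha_{p(1)}|=0$ makes the width $3\pi/2>\pi$ — they cover the entire circle. Hence one may choose $q_\ell$, then $d_\ell$, so that $\phi_0$ is interior to $e^{-ia\eta_\ell}S^{\mathrm{ext}}_{p(\ell)}$. The paths $\gamma'_\ell:=e^{i\eta_\ell}C_{p(\ell)}$ are then rapid decay cycles for $x$ near $(\phi_0,\pi)$, the previous paragraph gives $\Phi_{\gamma'_\ell}=\sum_j M_{\ell j}\psi^{(j)}$ with the same invertible $M$, and all these expansions are valid on the open product of sectors $\bigl(\bigcap_\ell e^{-ia\eta_\ell}S^{\mathrm{ext}}_{p(\ell)}\bigr)\times\bigl(\bigcap_\ell e^{-ib\eta_\ell}W_0\bigr)$, a neighbourhood of $(\phi_0,\pi)$; this settles the case $\beta\notin\ZZ$.

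For $\beta\in\ZZ$ I would argue exactly as in the proof of Theorem~\ref{a.e.are-basis-in-dim2}: if $\beta\in\NN(a,b)$ the single missing basis element is the polynomial $\psi^{(j_0)}$, a scalar multiple of the integral along the small circle $\{|t|=\epsilon\}$, a compact cycle whose (polynomial) expansion is rotation invariant and hence available in every product of sectors; if $\beta\in\ZZ\setminus\NN(a,b)$ the missing element comes from the integral $J_\beta$ of~\eqref{jota-beta-a-b} along $e^{i\eta}\cdot[0,+\infty[$, whose Gevrey expansion is obtained as in Theorem~\ref{a.e.are-basis-in-dim2} and to which the rotation identity applies unchanged, so the very same choice of $\eta$ puts its expansion in the required product of sectors. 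I expect the main obstacle to be the width bookkeeping in the previous paragraph — checking that the thickened, translated copies of the enlarged sectors $S^{\mathrm{ext}}_{p(\ell)}$ really do cover the circle for every admissible $(a,b)$, the borderline being $|\alpha_{p(\ell)}|=\pi a/b$ — together with the consistent choice of branches that makes the phase $e^{i\eta\beta}$ uniform in $j$.
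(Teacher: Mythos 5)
Your argument is essentially the paper's own proof, written out in full: you rotate the cycles by (perturbed) $b$-th roots of unity $e^{i\eta}$, use $\gcd(a,b)=1$ so that the induced rotations $e^{ia\eta}$ of the $x_1$-sector realize all multiples of $2\pi/b$ while the $x_2$-sector stays near the negative real axis, and then check that the enlarged sectors from Proposition \ref{DAsecteurs}(3) have width exceeding $2\pi/b$ so their rotates cover every argument of $x_1$. The paper leaves the phase bookkeeping for the coefficients $\mu_j$, the width verification (including the borderline case $b=2$), and the $\beta\in\ZZ$ cases to ``careful inspection''; your write-up supplies exactly those details correctly.
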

\begin{proof}Changing the cycles $\gamma$ to $e^{2ip\pi/b}\cdot\gamma$ preserves the hypothesis for $x_2$
 and modify the argument of $x_1$ by a factor $e^{2ipa\pi/b}$. By varying $p$, and because $(a,b)=1$ we get all the $b$-roots of unity. It is therefore sufficient to check that the asymptotic expansion found in  Subsection \ref{casab} is valid in sectors around the real negative axis whose union for a given $\ell$ has a width strictly greater than $\frac{2\pi}{b}$. This follows from a careful inspection of  the proof of the enlargement statement in Proposition \ref{DAsecteurs}.
\end{proof}

\section{Gevrey solutions modulo convergent solutions} \label{gevrey-mod-convergent}
We can also give a description of the stalk of the solution space $\cH om_{\cD_X} (\cM_A(\beta), \cQ_Y(s))$ at any point of $Y\setminus Z$ where, for $s\geq 1$, $\cQ_Y(s)$ is the quotient of ${\cO_{\widehat{X\vert Y}}(s)}$ by ${\cO_{{X\vert Y}}}$.
By \cite[Th. 5.3]{FC1} this space is just $(0)$ if $1\leq s<\frac{a_n}{a_{n-1}}$ and has dimension $a_{n-1}$ if $s\geq \frac{a_n}{a_{n-1}}$. We will assume in this Section that $s\geq \frac{a_n}{a_{n-1}}$.
Let $\phi(t)$ a $C^{\infty}$ function with compact support locally constant with value 1 near the origin. We consider the following integral, see (\ref{jota-beta-a1-an}): \begin{equation} \label{jota-beta-phi} J_{\phi,\beta}(x)= J_{\phi,\beta}(x_1,\ldots,x_n) := \int_{0}^{+\infty} t^{-\beta -1} \left(e^{x_1 t^{a_1}+ \cdots + x_n t^{a_n}} - P_\beta(x,t\phi(t))\right)dt \end{equation} where $P_\beta(x,t)$ is the Taylor polynomial of the exponential of degree in $t$ less than or equal to $\beta$. We write $J_{\phi,\beta}(A; x)$ if we want to emphasize the dependence of this integral on the matrix $A$. This integral defines a holomorphic function in $\CC^{n-2}\times S_{n-1}\times S_n$ for some open sectors $S_i\subset \CC$, $i=n-1, n$,  each of them containing the real negative axis. In general this integral is not a solution of the hypergeometric system $\cM_A(\beta)$ but it is a solution modulo convergent power series.

Now we come back to the Gevrey solutions modulo convergent ones. We will treat the case $A=(a,b)$ first. According to Corollary \ref{main-theorem-holds-for-dim-2}, for $\beta \not \in \ZZ$ the family of asymptotic expansions $\{\Phi_\tau(A,\beta):= \Phi_{\gamma_\tau}(A,\beta), \tau=1,\ldots,a\}$ is a basis of Gevrey solutions  of $\cM_A(\beta)$ in $\cO_{\widehat{X\vert Y}}(s)$. For $\beta \not \in \ZZ$, the family $\{\psi_{A,\beta}^{(j)} \, \vert \, j=0,\ldots,a-1\}$ is also a basis of Gevrey solutions and their classes modulo convergent series form a basis of $\cH om_{\cD_X} (\cM_A(\beta), \cQ_Y(s))$ \cite[Th. 5.9 (i)]{FC2}. Then, the classes of $\{\Phi_\tau(A,\beta), \tau=1,\ldots,a\}$ modulo convergent series forms a basis of $\cH om_{\cD_X} (\cM_A(\beta), \cQ_Y(s))$.

%Previous situation is still valid when $\beta \in \ZZ \setminus \NN(a,b)$, taking into account that we have denoted $\Phi_{a}(A,\beta)$ the asymptotic expansion of an integral $J_\beta(x_1,x_2)$ over $\gamma=[0,\infty)$, see (\ref{jota-beta-a-b}).

Previous situation is still valid when $\beta \in \ZZ \setminus \NN(a,b)$, taking into account that we denote $\Phi_{a}(A,\beta)$ the asymptotic expansion of an integral $J_\beta(x_1,x_2)$ over $\gamma=[0,+\infty[$, see Theorem \ref{a.e.are-basis-in-dim2}. For $p=b$ the integer $\ell$ is just $a$ and the integral along  the cycle $C_b$ is zero. As explained in the proof of Theorem \ref{a.e.are-basis-in-dim2} the asymptotic expansion of $J_\beta(x_1,x_2)$ replace the missing integral along rapid decay cycles.

Assume now $\beta \in \NN(a,b)$ and write $\beta = j_0b+qa$ with $0\leq j_0 < a$ and $q\geq 0$. We know that the Gamma series $\psi^{(j_0)}_{A,\beta}$ is a polynomial and then its class modulo convergent series is zero. We denote by $\Phi_{a}(A,\beta)$ the asymptotic expansion  of $J_{\phi,\beta}(x_1,x_2)$, see (\ref{jota-beta-phi}). The coefficient $c_k(x_1)$ of this expansion is, for $k$ big enough,  exactly the same as in (\ref{c-k-jota-a-b}). In particular, the exponent of $x_1$ in $c_{am+j_0}(x_1)$ for $m$ big enough
is the negative integer $q-bm$. This proves that  the family $\{\Phi_{\tau}(A,\beta),\, \tau =1,\ldots, a\}$ is still linearly independent modulo convergent power series. Hence, it defines a basis of $\cH om_{\cD_X} (\cM_A(\beta), \cQ_Y(s))$.

We treat now the case $A=(a_1,\ldots,a_n)$ and $n\geq 3$. We write as usual $a_{n-1}=ka, a_n=kb$ for $k\geq 1$ and $\gcd(a,b)=1$. According to Theorem \ref{a.e.are-basis-a1-an}, for $\beta \not \in \NN$ the family of asymptotic expansions $\{\Phi_{\widetilde{\gamma}_{\tau,r}}(A,\beta),\, 1\leq \tau \leq a, \, 0\leq r < k \}$ is a basis of Gevrey solutions  of $\cM_A(\beta)$ in $\cO_{\widehat{X\vert Y}}(s)$. For $\beta \not \in \NN$, the family $\{\widetilde{\varphi}_{A,\beta}^{(j)}:=\varphi^{(j)}_{A',\beta}(0,x) \, \vert \, j=0,\ldots,a_{n-1}-1\}$ is also a basis of Gevrey solutions and their classes modulo convergent series form a basis of $\cH om_{\cD_X} (\cM_A(\beta), \cQ_Y(s))$ \cite[Th. 5.5, (i)]{FC1}. Then, the classes of $\{\Phi_{\widetilde{\gamma}_{\tau,r}}(A,\beta), \, 1\leq \tau \leq a, \, 0\leq r < k\}$ modulo convergent series form a basis of $\cH om_{\cD_X} (\cM_A(\beta), \cQ_Y(s))$.

Previous situation is still valid when $\beta \in \NN \setminus \NN A$, taking into account that we denoted $\Phi_{\widetilde{\gamma}_{\tau_0,r_0}}(A,\beta)$ the asymptotic expansion of an integral $J_\beta(x)$ over $[0,+\infty[$, see (\ref{jota-beta-a1-an}) and we considered this asymptotic expansion as the generator of a complement space of $\{\widetilde{\varphi}_{A,\beta}^{(j)}, \, j=0,\ldots,a_{n-1}-1\}$.

Assume now $\beta \in \NN A$.
%and write $\beta = p_0b+qa$ with $0\leq p_0 < a$ and $q\geq 0$. We know that the Gamma series $\phi^{(p_0)}_{A,\beta}$ is a polynomial and then its class modulo convergent series is zero.
We denote by $\Phi_{\widetilde{\gamma}_{\tau_0,r_0}}(A,\beta)$ the asymptotic expansion  of $J_{\phi,\beta}(x)$, see (\ref{jota-beta-phi}),  with respect to $x_n$. The coefficient $c_k(x_1,\ldots,x_{n-1})$ of this expansion is, for $k$ big enough,  exactly the same as in (\ref{c-k-jota-a1-an}). We can proceed as in the proof of the general case of Theorem \ref{a.e.are-basis-a1-an} to see that the classes modulo convergent power series of the asymptotic expansions $\Phi_{\widetilde{\gamma}_{\tau,r}}(A,\beta)$ form a basis of the solution space
$\cH om_{\cD_X} (\cM_A(\beta), \cQ_Y(s))$.

\section*{Acknowledgements} The first author would like to thank the D\'epartement de Math\'ematiques de l'Universit\'e d'Angers (France) and the Geanpyl project of the Pays de Loire for their support and hospitality during the preparation of the final version of this work.

The second author would like to thank the University of Sevilla (Spain) for its support during a stay in April 2012, for the preparation of this paper.

\end{document}